\newtheorem{thm}{Theorem}[section]
\newtheorem{prop}[thm]{Proposition}
\newtheorem{lem}[thm]{Lemma}
\newtheorem{corro}[thm]{Corollary}
\newtheorem{defi}[thm]{Definition}
\newtheorem{rem}[thm]{Remark}
\newtheorem{ass}{Assumption}
\def\R{\mathbb R}
\def\N{\mathbb N}
\def\E{\mathbb E}
\def\P{\mathbb P}
\def\shb{{\cal B}}
\def\shc{{\cal C}}
\def\shf{{\cal F}}
\def\shp{{\cal P}}
\def\shu{{\cal U}}
\author{
{\sc Anthony LE CAVIL}
\thanks{ENSTA-ParisTech, Universit\'e Paris-Saclay.
 Unit\'e de Math\'ematiques Appliqu\'ees (UMA).
  E-mail:{ \tt anthony.lecavil@ensta-paristech.fr}} 
 {\sc,}\ {\sc Nadia OUDJANE}
\thanks{EDF R\&D,   and FiME (Laboratoire de Finance des March\'es de l'Energie
(Dauphine, CREST,  EDF R\&D) www.fime-lab.org).
E-mail:{\tt  
nadia.oudjane@edf.fr}}
\ {\sc and}\ {\sc Francesco RUSSO} 
\thanks{ENSTA-ParisTech, Universit\'e Paris-Saclay. Unit\'e de Math\'ematiques Appliqu\'ees (UMA). 
  E-mail:{\tt  francesco.russo@ensta-paristech.fr}}.
}
\date{August 2nd 2016}
\title{Particle system algorithm and chaos propagation related
 to  non-conservative McKean type stochastic differential equations.}
\newcommand{\MBFigure}[6]{
$\left. \right.$ \\
\refstepcounter{figure}
\addcontentsline{lof}{figure}{\numberline{\thefigure}{\ignorespaces #5}}
\begin{center}
\begin{minipage}{#1cm}
\centerline{\includegraphics[width=#2cm,angle=#3]{#4}}
\begin{center}
\upshape{F\textsc{ig} \normal
\end{center}
size{\thefigure}. $-$} #5
\end{center}
\label{#6}
\end{minipage}
\end{center}
$\left. \right.$ \\}
\begin{document}
\maketitle 
\begin{abstract} We discuss numerical aspects related to a new class of 
nonlinear Stochastic Differential Equations
in the sense of McKean, which are supposed to represent
 non conservative nonlinear Partial Differential equations (PDEs).
 We propose an original interacting particle system 
 for which we discuss the propagation of chaos. 
We consider a time-discretized approximation of this particle system to which
 we associate a random function which is proved to converge
to a solution of a regularized version of a nonlinear PDE.
\end{abstract}
\medskip\noindent {\bf Key words and phrases:}  
 Chaos propagation; Nonlinear Partial Differential Equations;
McKean type  Nonlinear Stochastic Differential Equations; Particle systems;
 Probabilistic representation of PDEs.

\medskip\noindent  {\bf 2010  AMS-classification}:
 65C05; 65C35; 68U20; 60H10; 60H30; 60J60; 58J35 
%%%%%%%%%%%%%%%%%%%%%%%%%%%%%%%%%%%%

\section{Introduction}
\label{SIntroduction}

%Framework and motivations
%%%%%%%%%%%%%%%%%%%%%%%%%%%%

Stochastic differential equations of various types are very useful to investigate nonlinear partial differential equations (PDEs)
at the theoretical and numerical level.
%Probabilistic representations of nonlinear Partial Differential Equations (PDEs) are interesting in several aspects.
 From a theoretical point of view,
% such representations allow for
%One one hand, 
they constitute probabilistic tools to study the analytic properties of the equation.
% (existence and/or uniqueness of a solution, regularity,\ldots).
Moreover they provide a microscopic interpretation of physical phenomena macroscopically drawn by a nonlinear PDE. 
%Similarly, stochastic control problems are a way of interpreting non-linear PDEs through Hamilton-Jacobi-Bellman equation that have their own theoretical and practical interests~(see~\cite{FlemingSoner}).
From a numerical point of view, such representations allow for extended Monte Carlo type methods, which are potentially less sensitive to the dimension of the state space. \\
Let us consider $d,p \in \N^{\star}$. Let 
%$\Phi, g, \Lambda:[0,T] \times \R^d \times \R \rightarrow \R$
$\Phi:[0,T] \times \R^d \times \R \rightarrow \R^{d \times p}$,
 $g:[0,T] \times \R^d \times \R \rightarrow \R^d$,
$\Lambda:[0,T] \times \R^d \times \R \rightarrow \R$,
 be Borel bounded  functions, $K: \R^d \rightarrow \R$ be a smooth mollifier in $\R^d$ and $\zeta_0$ be a probability  on $\R^d$. 
When it is absolutely continuous   
$v_0$ will denote its density so that $\zeta_0(dx) = v_0(x) dx$.  The main motivation of this work is the simulation of solutions to PDEs of the form 
\begin{equation}
\left \{ 
\begin{array}{l}
\partial_t v = \sum_{i,j = 1}^d \partial_{ij}^2 \big( (\Phi \Phi^t)_{i,j}(t,x,v) v \big) - div \big(g(t,x,v) v \big) + \Lambda(t,x,v)v \\
v(0,dx) = v_0(dx),
\end{array}
\right .
\end{equation}
through probabilistic numerical methods. Examples of nonlinear and 
nonconservative PDEs that are of that form arise  in 
hydrodynamics and biological
modeling. For instance one model related to  underground water flows 
 is known in the literature as the Richards equation
\begin{equation}
\left\{
\begin{array}{l}
\partial_t v = \Delta (\beta(v)) + div \big( \alpha(v) \big) + \phi(x) \\
v(0,\cdot) = v_0 \ ,
\end{array}
\right .
\end{equation}
where $\beta: \R \longrightarrow \R,  \
\alpha: \R \longrightarrow \R^d     $ and $\phi : \R^d \longrightarrow \R$.
Another example concerns biological mechanisms %which is present in biology  and describe phenomenons
as migration of biologial species or the evolution of a tumor growth. Such equations can be schematically written as 
%\begin{equation}
%\left\{
%\begin{array}{l}
%%\partial_t v = \Delta v + f(v,\nabla_x v) \\
%\partial_t v = \Delta v + \psi(v) \\
%v(0,\cdot) = v_0 \ .
%\end{array}
%\right .
%\end{equation}
%We can also consider
\begin{equation}
\left\{
\begin{array}{l}
\partial_t v = \Delta \beta(v) + f(v) \\
v(0,\cdot) = v_0 \ ,
\end{array}
\right .
\end{equation}
where $\beta : \R \longrightarrow \R$
%% OLD  $\beta : \R \longrightarrow \R^{d \times d}$
 is bounded, monotone and $f : \R \rightarrow \R$. This family of PDEs is sometimes called Porous Media type Equation with proliferation, due to the presence of the term $f$ that characterizes a proliferation phenomena and the term 
$\Delta \beta(v)$ delineates a porous media effect.
In particular, for $\beta(v) = v^2$ and $f(v) = v(1-v)$, this type of equation appears in the modeling of tumors. \\
%$ f(v) = v(1-v)$.
% measurable and for which several authors have studied existence/uniqueness of an associated probabilistic representation. } //
The present paper focuses on numerical aspects of a specific forward probabilistic representation initiated in \cite{LOR1},
relying on nonlinear 
SDEs in the sense of McKean~\cite{McKean}. In \cite{LOR1}, we have introduced and studied  a  generalized regularized McKean type nonlinear stochastic differential equation (NLSDE)
of the  form
%We are motivated in the simulation of a generalized regularized McKean type nonlinear stochastic differential equation (NLSDE)
\begin{equation}
\label{eq:NSDE}
\left\{
\begin{array}{l}
Y_t=Y_0+\int_0^t \Phi(s,Y_s,u(s,Y_s)) dW_s+\int_0^tg(s,Y_s, u(s,Y_s))ds\ ,\quad\textrm{with}\quad Y_0\sim  \zeta_0 \ ,\\
%u(t,\cdot) \quad  \textrm{is the density of} \ \nu_t \   \textrm{s.t. for any bounded continuous test function}\  \varphi \in \mathcal{C}_b(\R^d, \R)\\ 
u(t,y) = \E[K(y-Y_t) \, \exp \left \{\int_0^t\Lambda \big (s,Y_s,u(s,Y_s)\big )ds\right \}]\ ,\quad\textrm{for any}\ t\in [0,T]\ ,
\end{array}
\right .
\end{equation}
where the solution is the  couple process-function $(Y,u)$. The novelty with respect to classical McKean type equations
consists in the form of the second equation, where, for each $t>0$, in the classical case ($\Lambda = 0$) $u(t,\cdot)$ was explicitely given by the
 marginal law of $Y_t$.
 The present paper aims at proposing  and implementing a stochastic particle algorithm  to approximate \eqref{eq:NSDE} and 
investigating carefully its convergence properties. \\
\eqref{eq:NSDE} is the probabilistic representation of
the partial integro-differential equation  (PIDE) 
\begin{equation}
\label{epide}
\left \{
\begin{array}{l}
\partial_t \bar v = \frac{1}{2} \displaystyle{\sum_{i,j=1}^d} \partial_{ij}^2 \left( (\Phi \Phi^t)_{i,j}(t,x,K\ast \bar v) \bar v \right) - div \left( g(t,x,K\ast \bar v) \bar v \right) +\Lambda(t,x,K\ast \bar v) \bar v, \\
\bar v(0,x) = v_0 \ , %\mathcal{L}(Y_0) \ .
\end{array}
\right .
\end{equation}
in the sense that, given a solution 
$(Y,u)$ of~\eqref{eq:NSDE}, there is a solution $\bar v$
of \eqref{epide}
 in the
sense of distributions, such that 
 $u=K\ast \bar v:=\int_{\R^d}K(\cdot-y)\bar v(y)dy$. 
This follows, for instance, by a simple application of It\^o's formula,
as explained in Theorems 6.1 and 6.2, Section 6 in \cite{LOR1}.
%Besides the theoretical aspects related to the well-posedness of \eqref{eq:NSDE} considered in \cite{LOR1},
%our main motivation here is to simulate numerically efficiently their solutions.
%With this numerical objective,
% {\bf Ultimately, our motivation is to provide a probabilistic represention allowing to approximate numerically the limit PDE corresponding to the case where the smoothing kernel reduces to a Dirac. However, a theoretical analysis of this asymptotic limit is out of the scope of this paper, but will be investigated numerically via simulations reported at the end of this one.} 
Ideally our interest is devoted to \eqref{eq:NSDE} 
%and \eqref{epide} 
when the smoothing kernel $K$  reduces to a Dirac measure at zero.
To reach that scope, one would need to replace in previous equation  $K$ into $K_\varepsilon,$ where $K_\varepsilon$ converges to 
the Dirac measure and to analyze the convergence of the corresponding solutions. 
  However, such a theoretical analysis  is out of the scope of this paper, but it will be investigated numerically via simulations 
reported at the end.

In fact, in the literature appear several  probabilistic representations, with the objective
of  simulating numerically  the  corresponding PDE.
% each one having 
%specific features regarding the implied approximation schemes.\\
%
One method which has been largely investigated for
approximating solutions of time evolutionary PDEs is the method
of forward-backward SDEs (FBSDEs).  FBSDEs were initially developed
 in~\cite{pardoux}, see
also \cite{pardouxgeilo} for a survey and \cite{rascanu} for a recent monograph on the subject.  
The idea is to express the PDE solution $v(t,\cdot)$ at time $t$  as the expectation of
a functional of a so called forward diffusion process $X$, starting at time $t$.
%The
% second component of the process $\mathcalY$ is expressed as the expectation
%solution is a  functional of the whole path  of the forward process.
 Based on that idea, many judicious numerical schemes have been proposed 
%by~\cite{Zhang,BouchardTouzi,GobetWarin}. 
by~\cite{BouchardTouzi,GobetWarin}. 
However,  all those  rely on computing recursively conditional expectation functions which is known to be a difficult
 task in high  dimension. Besides, the FBSDE approach is \textit{blind} in the sense that the forward process $X$ is
 not ensured to explore the most relevant space regions to approximate efficiently the solution of the FBSDE of interest.
 On the theoretical side, the FBSDE representation of fully nonlinear PDEs still requires complex developments and is 
the subject of active research (see for instance~\cite{cheridito}). 
Branching diffusion processes 
%or in particular   superprocesses (see e.g. ~\cite{Dynkin})
 provide alternative  probabilistic representation of semi-linear PDEs,
 involving a specific form of non-linearity on the zero order term.
% We refer to~\cite{Dynkin} for the case of superprocesses. 
This type of approach has been recently extended in~\cite{labordere,LabordereTouziTan} to a
 more general class of non-linearities on the zero order term, with the so-called \textit{marked branching process}. 
One of the main advantages of this approach compared to FBSDEs is that it does not involve any regression computation
to calculate conditional expectations. 
A third class of numerical approximation schemes relies on McKean type representations. 
%The originality of the present paper is to combine together these two features 
%to relate the probabilistic system~\eqref{eq:NSDE} to the nonlinear 
%PIDE~\eqref{epide}.  
 In the time continuous framework, classical McKean representations are restricted to the conservative case ($\Lambda=0$).
Relevant contributions at the algorithmic level are \cite{BossyTalay95,
 BossyTalay97, bossyjourdain, mbajourdain}, 
and the survey paper \cite{Talay}.
% A third class of numerical approximation schemes relies on McKean type representations,
% but they were restricted to the conservative case ($\Lambda = 0$), see for example \cite{moral1}.
%  CITER DEL MORAL A CE NIVEAU.
In the case $\Lambda = 0$  with $g=0$, but with $\Phi$ possibly discontinuous,
some empirical implementations were conducted in \cite{BCR1, BCR3} in the one-dimensional and multi-dimensional case
respectively,  in order to predict the large time qualitative behavior 
of the solution of the corresponding PDE.

In the present paper we extend this type of McKean based numerical schemes
to the case of non-conservative PDEs ($\Lambda \neq 0$).
% One numerical intuition motivating our interest in (possibly non-conservative) 
%  PDEs representation of McKean type is the possibility to take 
% advantage of the  forward feature of this representation to bypass the dimension problem by localizing the particles
%  precisely in the \textit{regions of interest}, although this point will not be developed in the present paper.
 An interesting aspect  of this approach is that it is potentially able to represent fully nonlinear PDEs, by considering
a more general class of functions $\Lambda$ which may depend non-linearly not only on $u$ but
on its space derivatives up to the second order. This more general setting
% {\bf as well as the limiting case where the smoothing kernel converges to a Dirac},
 will be focused in a future work.
 % In this paper, for the considered class of generalized NLSDE, we establish the so called {\it propagation of chaos} of 
 % an associated interacting particle system and we develop a numerical scheme based on it. 
 % The convergence of this algorithm is proved by propagation of chaos  and through the
 %  control of the time discretization error.  Finally, some numerical simulations illustrate 
 % the practical interest of this new algorithm. 
% In the case $\Lambda = 0$  with $g=0$, but with $\Phi$ possibly discontinuous, 
% a first step in that direction in order to predict the qualitative behavior 
% of the solution for large time was numerically simulated
% in \cite{BCR1, BCR3} respectively for the one-dimensional and multi-dimensional case.
In the discrete-time framework, Feynman-Kac formula and various types of related particle approximation schemes were extensively 
analyzed in the reference books of Del Moral~\cite{DelMoral} and~\cite{moral1} but without considering the  specific case
of a  time continuous system~\eqref{eq:NSDE} coupled with  a weighting function $\Lambda$ 
which depends  nonlinearly on $u$.

By \eqref{eq:XIi} we introduce an interacting particle system associated to~\eqref{eq:NSDE}.
Indeed we replace one single McKean type stochastic differential equation 
with unknown process $Y$, with a system
of $N$ ordinary stochastic differential equations, whose solution consists in a system of
particles $\xi = (\xi^{j,N})$, 
 replacing the law of the process  $Y$ by the empirical mean law
 $\displaystyle{ S^N(\xi):=\sum_{j=1}^N\delta_{\xi^{j,N}} }$.
% is the  empirical measure of the particles $\xi = (\xi^{j,N})$.

%The main contribution of this paper is 
In Theorem \ref{CS8}  we  prove  the convergence  of the time-discretized particle system 
 under Lipschitz type assumptions on the coefficients $\Phi$, $g$ and $\Lambda$, obtaining an explicit rate.
The mentioned rate is based on the contribution of two effects. 
First,  the particle approximation error between the solution $u$ of \eqref{eq:NSDE}
and the approximation   $u^{S^N(\mathbf{\xi})}$, solution of
 \begin{equation} \label{NSDE3}
u^{S^N(\mathbf{\xi})}_t(y)= \displaystyle \frac{1}{N}\sum_{j=1}^N 
 K(y-\xi^{j,N}_t) \exp \left \{\int_0^t\Lambda \big (s,\xi^{j,N}_s,u^{S^N(\mathbf{\xi})} (s,\xi^{j,N}_s) \big ) ds\right \},
 %V_t\big (\xi^{j,N},u^{S^N(\mathbf{\xi})}(\xi^{j,N})\big ) }
 % u^m(t,y) = 
 % \int_{\mathcal C^d} 
 %  K(y- \omega_t)
 % \exp \left \{\int_0^t\Lambda \big (s,\omega_s,u^m(s,\omega_s) \big ) ds\right \} dm(\omega) \ 
 \end{equation}
%when $S^N(\xi)$ is the  empirical measure of the particles $\xi = (\xi^{j,N})$.
which is evaluated in Theorem \ref{TPC}.
The second effect
%to $u^{m_0}$,
 % the chaos propagation mentioned in Corollary \ref{CCP}
%of the continuous time interacting particle system 
is the  time discretization error,  established in Proposition \ref{prop:DiscretTime}. 
% that the propagation of chaos holds, under Lipschitz type assumptions on the coefficients $\Phi$, $g$ and $\Lambda$. 
%This is  the object of  Section \ref{SChaos}, 
% see  Theorem \ref{TPC} and subsequent remarks.
%Indeed the object of  Theorem \ref{TPC}  
% also states the convergence of the solution  $u^m$  of the 
% following linking equation
%  \begin{equation} \label{NSDE3}
%  u^m(t,y) = 
%  \int_{\mathcal C^d} 
%   K(y- \omega_t)
%  \exp \left \{\int_0^t\Lambda \big (s,\omega_s,u^m(s,\omega_s) \big ) ds\right \} dm(\omega) \ ,
%  \end{equation}
%  %where $m = m_Y$ is the law of $Y$ on the canonical space $\mathcal{C}^d$.
% when $m = S^N(\xi)$ is the  empirical measure of the particles to $u^{m_0}$, where $m_0$ is the law 
% of the solution of \eqref{eq:NSDE},
The errors are evaluated in the $L^p, p = 2, +\infty$ mean distance, in terms of the number $N$ of particles
and the time discretization step.
% In particular,  we provide estimates of the   convergence rates
%making use of  a refined analysis of the Lipschitz properties of $m \mapsto u^m$
% w.r.t.  various metrics on probability measures. 
One significant consequence of Theorem \ref{TPC} is Corollary \ref{CCP} which states the chaos propagation
of the interacting particle system. \\ 
 We emphasize that the proof of Theorem  \ref{TPC} relies on  Proposition \ref{PMoreGeneral}, whose formula
\eqref{eq:xiYuFinalGeneral} allows 
to control the particle approximation error without use of exchangeability assumptions on the particle system,
see Remark \ref{RMoreGeneral}.

%Paper Organization
%%%%%%%%%%%%%%%%%%%%%%%%%%%
The paper is organized as follows. After this introduction, 
we formulate the basic assumptions valid along the paper and recall important results proved in \cite{LOR1} and used in the sequel.
The evaluation of the particle approximation error is discussed in Section \ref{SChaos}.
 Section \ref{S8} focuses on the convergence of the time-discretized particle system.
Finally in Section \ref{SNum} we provide 
 numerical simulations  illustrating the performances of the
 interacting particle system in approximating the  limit PDE 
(i.e. when the smoothing kernel $K$ reduces to a Dirac measure at zero), %\eqref{epide}
in a specific case where the solution is explicitely known.
% The present paper is an improved version of the second part 
%of the unpublished paper \cite{LOR}.

\section{Notations and assumptions}

\label{S2}

\setcounter{equation}{0}

%%%%%%%%%%%%%%%%%%%%%%%%%%%%%%%%%%%
Let us consider $\shc^d:=\mathcal{C}([0,T],\R^d)$ metrized by the supremum norm $\Vert \cdot \Vert_{\infty}$, equipped with its Borel $\sigma-$ field $\mathcal{B}(\shc^d) = \sigma(X_t,t \geq 0)$ (and $\mathcal{B}_t(\shc^d) := \sigma(X_u,0 \leq u \leq t)$ the canonical filtration) and endowed with the topology of uniform convergence. $X$ will be the canonical process on $\shc^d$ and $\mathcal{P}_r(\shc^d)$ the set of Borel probability measures on $\shc^d$ admitting a moment of order $r \geq 0$. For $r=0$, $\mathcal{P}(\shc^d) := \mathcal{P}_0(\shc^d)$
 is naturally the Polish space (with respect to the weak convergence topology) of Borel probability measures on $\shc^d$ naturally equipped with its Borel $\sigma$-field $\mathcal{B}(\shp(\shc^d))$.
When $d=1$, we often omit it and we simply set 
 $\mathcal{C} :=  \mathcal{C}^1$.\\
We recall that the Wasserstein distance of order $r$ 
 and respectively the \textit{modified Wasserstein distance of order $r$}
for $r \ge 1$,
  between $m$ and  $m'$ in $\mathcal{P}_r(\mathcal{C}^d) $, denoted by $W^r_{T}(m,m')$ (and resp. $\widetilde{W}^r_{T}(m,m')$) 
are such that
\begin{eqnarray}
\label{eq:Wasserstein}
(W^r_{t}(m,m')) ^{r} & := & \inf_{\mu\in \Pi(m,m')} \left \{ \int_{\mathcal{C}^d \times \mathcal{C}^d} \sup_{0 \leq s\leq t} \vert X_{s}(\omega) - 
X_{s}(\omega')\vert ^{r}  d\mu(\omega,\omega')   \right \}, \ t \in [0,T] \ , \\
\label{eq:WassersteinTilde}
 ( \widetilde{W}^r_{t}(m,m')) ^{r} & := & \inf_{\mu\in \widetilde{\Pi}(m,m')} \left \{ \int_{\mathcal{C}^d \times \mathcal{C}^d} \sup_{0 \leq s\leq t} \vert X_{s}(\omega) - X_{s}(\omega')\vert ^{r} \wedge 1 \; d\mu(\omega,\omega')   \right \}, \ t \in [0,T] \ , 
\end{eqnarray}
where $\Pi(m,m')$ (resp. $\widetilde{\Pi}(m,m')$) denotes the set of Borel probability measures in $\shp(\shc^d \times \shc^d)$ 
with fixed marginals $m$ and $m'$ belonging to $\shp_r(\shc^d)$ (resp. 
%with fixed marginals $m$ and $m'$ belonging to 
$\shp(\shc^d)$ ).
In this paper we will use very frequently the Wasserstein distances
of order $2$. For that reason, we will simply set
$W_t: = W^2_t$ (resp. ${\tilde W}_t: = {\tilde W}^2_t$).
 \\
Given $N \in \N^{\star}$, $l \in \shc^d$, $l^1, \cdots, l^N \in \shc^d$, a significant role in this paper will be played by the Borel measures on $\shc ^d$ given by $\delta_l$ and $\displaystyle{ \frac{1}{N} \sum_{j=1}^N \delta_{l^j}}$. 
\begin{rem}
\label{RADelta}
Given $l^1,\cdots,l^N, \tilde{l}^1,\cdots, \tilde{l}^N \in \shc^{d}$, by definition of the Wasserstein distance we have, for all $t \in [0,T]$
\begin{eqnarray}
W_t \left(\frac{1}{N} \sum_{j=1}^N \delta_{l^j},\frac{1}{N} \sum_{j=1}^N \delta_{\tilde{l}^j} \right) \leq \frac{1}{N} \sum_{j=1}^N \sup_{0 \leq s \leq t} \vert l^j_s - \tilde{l}^j_s \vert^2 \nonumber \ .
\end{eqnarray}
\end{rem}
In this paper $\shc_b(\shc^d)$ denotes the space of bounded, continuous real-valued functions on $\shc^d$, for which the supremum norm is denoted by $\Vert \cdot \Vert_{\infty}$.
$\R^d$ is equipped with the scalar product $\cdot $ and $\vert x\vert $ stands for the induced Euclidean norm for $x \in \R^d$. Given two reals $a$, $b$ ($d=1$) we will denote in the sequel $a \wedge b := \min(a,b)$ and $a \vee b := \max(a,b)$. \\ 
$\mathcal{M}_f(\R^d)$ is the space of finite, Borel measures on $\R^d$. $\mathcal{S}(\R^d)$ is the space of Schwartz fast decreasing test functions and $\mathcal{S}'(\R^d)$ is its dual. $\mathcal{C}_b(\R^d)$ is the space of bounded, continuous functions on $\R^d$, $\mathcal{C}^{\infty}_0(\R^d)$ is the space of smooth functions with compact support. $\mathcal{C}^{\infty}_b(\R^d)$ is the space of bounded and smooth functions. $\mathcal{C}_0(\R^d)$ represents the space of continuous functions with compact support in $\R^d$. 
%$H^{-1}(\R)$ is the classical Sobolev space,
 $W^{r,p}(\R^d)$ is the Sobolev space of order $r \in \N$ in $(L^p(\R^d),||\cdot||_{p})$, with $1 \leq p \leq \infty$. \\
 $\mathcal{F}(\cdot): f \in \mathcal{S}(\R^d) \mapsto \mathcal{F}(f) \in \mathcal{S}(\R^d)$ will denote the Fourier transform on the classical Schwartz space $\mathcal{S}(\R^d)$ such that for all $\xi \in \R^d$,
 $$
 \mathcal{F}(f)(\xi) = \frac{1}{\sqrt{2 \pi}} \int_{\R^d} f(x) e^{-i \xi \cdot x} dx \ .
 $$ 
 We will designate in the same manner the corresponding Fourier transform on $\mathcal{S}'(\R^d)$.
 
\begin{def} \label{def_r.m}
For any Polish space $E$, we will designate by $\shb(E)$ its Borel $\sigma$-field. It is well-known that $\shp(E)$ is also a Polish space with respect to the weak convergence topology, whose Borel $\sigma$-field will be denoted by $\shb(\shp(E))$ (see Proposition 7.20 and Proposition 7.23, Section 7.4 Chapter 7 in \cite{BertShre}). \\
Let $(\Omega,\shf)$ be a measured space. A map 
$\eta : (\Omega,\shf) \longrightarrow (\shp(E),\shb(\shp(E)))$ will be
 called {\bf{random probability}}  (or {\bf{random probability kernel}}) if it is measurable. 
We will indicate by $\shp^{\Omega}(E)$ the space of  random probabilities.
%, i.e., the space of random probabilitis $\eta$ such that $\E[\eta(E)^2] < \infty$.
\end{def}
\begin{rem} \label{RMeasure}
Let $\eta : (\Omega,\shf) \longrightarrow (\shp(E),\shb(\shp(E)))$. $\eta$ is a random probability if and only if the two following conditions hold: 
\begin{itemize}
\item for each $\bar{\omega} \in \Omega$, $\eta_{\bar{\omega}} \in \shp(E)$,
\item for all Borel set $A \in \shb(\shp(E))$, $\bar{\omega} \mapsto \eta_{\bar{\omega}}(A)$ is $\shf$-measurable.
\end{itemize}
\end{rem}
This was highlighted in Remark 3.20 in \cite{crauel} (see also Proposition 7.25 in \cite{BertShre}).
\begin{rem} \label{RDelta}
Given $\R^d$-valued continuous processes $Y^1,\cdots, Y^n$, the application $\displaystyle{ \frac{1}{N} \sum_{j=1}^N \delta_{Y^j} }$ is a random probability on $\shp(\shc^d)$.
%\begin{proof}
In fact $\delta_{Y^j}, 1 \le j  \le N$ is a random probability by Remark \ref{RMeasure}.
%\end{proof}
\end{rem}
In  this article, the following assumptions will be used. 
\begin{ass}
\label{ass:main} 
\begin{enumerate}
	\item   $\Phi$ and  $g$ Borel functions defined on $[0,T] \times \R^d \times \R$
 taking values respectively in $\R^{d\times p}$ (space of $d \times p$ matrices) and $\R^d$ that are Lipschitz w.r.t. space variables: there exist finite positive reals $L_\Phi$ and $L_g$ such that for any $(t,y,y',z,z')\in [0,T] \times \R^d \times \R^d \times \R \times \R$, we have
$$
\vert \Phi(t,y',z')-\Phi(t,y,z)\vert \leq L_\Phi (\vert z'-z\vert + \vert y'-y\vert) \quad \textrm{and}\quad \vert g(t,y',z')-g(t,y,z)\vert \leq L_g (\vert z'-z\vert + \vert y'-y\vert)\ .
$$
%where for any $k\in\N^*$, $\vert \cdot\vert$ denotes the Euclidean norm on $\R^k$. 
	\item $\Lambda$ is a Borel real valued function defined on $[0,T]\times \R^d\times \R$  Lipschitz w.r.t. the
 space variables: %and the time variable : 
	there exists a finite positive real, $L_{\Lambda}$ such that  for any $(t,y,y',z,z')\in [0,T] \times \R^d\times \R^d\times \R \times \R$,
we have
 %$(t,t',y,y',z,z')\in [0,T]^2\times \R^d\times \R^d\times \R^2$:
$$
\vert \Lambda(t,y,z)-\Lambda(t,y',z')\vert \leq L_{\Lambda} (\vert y'-y\vert +\vert z'-z\vert )\ .
%\vert \Lambda(t,y,z)-\Lambda(t',y',z')\vert \leq L_{\Lambda} (\vert t-t'\vert+\vert y'-y\vert +\vert z'-z\vert )\ .
$$ 

%	\item $\Phi$, $g$ and $\Lambda$ are supposed to be uniformly bounded:  there exist finite positive reals $M_\Phi$, $M_g$ and $M_\Lambda$ such that, for any $(t,y,z)\in [0,T] \times \R^d \times \R$
%	
%	\begin{enumerate}
%	\item $$
%\vert \Phi(t,y,z)\vert \leq M_\Phi, \quad \vert g(t,y,z)\vert \leq M_g,$$
%\item $$ \vert \Lambda(t,y,z)\vert \leq M_\Lambda \ .
%$$
%\end{enumerate}
	\item $\Lambda$ is supposed to be uniformly bounded:  there exist a finite positive real $M_\Lambda$ such that, for any $(t,y,z)\in [0,T] \times \R^d \times \R$
	 $$ \vert \Lambda(t,y,z)\vert \leq M_\Lambda \ .
$$

	\item $K : \R^d \rightarrow \R_+$ is integrable, Lipschitz, bounded and whose integral is 1: there exist finite positive reals $M_K$ and $L_K$ such that for any $(y,y')\in\R^d\times \R^d$
$$
%0 \leq 
\vert K(y) \vert \leq M_K, \quad \vert K(y')-K(y)\vert \leq L_K \vert y'-y\vert\ \quad \textrm{and} \quad \int_{\R^d} K(x) dx = 1 \ .
$$
\item  $\zeta_0$ is a fixed Borel probability measure on $\R^d$ admitting a second order moment.
% We also add a sixth item.
% \begin{enumerate}
% \setcounter{enumi}{5}
\item The functions $s \in [0,T] \mapsto \Phi(s,0,0)$ and $s \in [0,T] \mapsto g(s,0,0)$ are bounded. $m_{\Phi}$ (resp. $m_g$) will denote the supremum $\sup_{s \in [0,T]} \vert \Phi(s,0,0) \vert$ (resp. $\sup_{s \in [0,T]} \vert g(s,0,0) \vert$). 
% \end{enumerate}
% \end{ass} 

\end{enumerate}
\end{ass}
Given a finite signed Borel measure $\gamma$ on $\R^d$,
$K * \gamma$ will denote the convolution function
$x \mapsto \gamma(K(x - \cdot))$. In particular if 
$\gamma$ is absolutely continuous with density 
${\dot \gamma}$, then
$(K * \gamma)(x) = \int_{\R^d} K(x-y) \dot \gamma(y)dy$. \\

To simplify  we introduce the following notations. 
 \begin{itemize}   
 	\item $V\,:[0,T] \times {\mathcal C}^d \times \mathcal{C}\rightarrow \R$ defined for any pair of functions $y\in \mathcal{C}^d$ and $z\in \mathcal{C}$, by 
\begin{equation}
\label{eq:V}
V_t(y,z):=\exp \left ( \int_0^t \Lambda(s,y_s,z_s) ds\right )\quad \textrm{for any} \ t\in [0,T]\ .
\end{equation}
	\item The real valued process $Z$ such that $Z_s=u(s,Y_s)$, for any $s\in [0,T]$, will often be denoted by $u(Y)$. 
\end{itemize}
With these new notations,  the second equation in~\eqref{eq:NSDE} can be rewritten as
\begin{equation}
\label{eq:mu}
 \nu_t(\varphi)=\E[(\check{K}\ast \varphi)(Y_t)V_t(Y,u(Y))]\ ,\quad \textrm{for any}\ \varphi\in \mathcal{C}_b(\R^d,\R)\ ,\\ 
\end{equation}
where  $u(t,\cdot)=\frac{d\nu_t}{dx}$ and $\check{K}(x) := K(-x)$.  
\begin{rem}
\label{rem:V}
Under Assumption \ref{ass:main}. 3.(b), $\Lambda$ is bounded.
% If  $\Lambda$ is supposed to be bounded, as it will always be the case, then $V$ is also bounded: 
Consequently
 \begin{equation}
 \label{eq:Vmajor1}
 0\leq V_t(y,z)\leq e^{tM_{\Lambda}}\ ,\quad\textrm{for any}\ (t,y,z)\in [0,T]\times \R^d\times \R\ . 
 \end{equation}
 Under Assumption \ref{ass:main}. 2.
  $\Lambda$ is Lipschitz. Then $V$ inherits in some sense this property. Indeed, observe that for any $(a,b)\in \R^2$, 
%\begin{eqnarray*}
 \begin{equation}        \label{EMajor1}
e^{b}-e^{a}=(b-a)\int_0^1 e^{\alpha b+(1-\alpha ) a}d\alpha 
\leq  e^{\sup (a, b)} \vert b-a \vert \ .
\end{equation}
Then for  any continuous functions $y,\,y'\in \mathcal{C}^d =\mathcal{C}([0,T],\R^d)$, and   $z,\,z'\in\mathcal{C}([0,T],\R)$, taking $a=\int_0^t\Lambda(s,y_s,z_s)ds$ and $b=\int_0^{t}\Lambda(s,y'_s,z'_s)ds$ in the above equality yields 
\begin{eqnarray}
\label{eq:Vmajor2}
\vert V_{t}(y',z')-V_t(y,z)\vert &\leq & e^{tM_{\Lambda}}\int_0^t\left \vert \Lambda(s,y'_s,z'_s)-\Lambda(s,y_s,z_s)\right \vert ds \nonumber \\
&\leq & e^{tM_{\Lambda}} L_{\Lambda} \, \int_0^t \left (\vert y'_s-y_s\vert +\vert z'_s-z_s\vert \right )ds \ .
\end{eqnarray}
\end{rem}
In Section \ref{S8}, Assumption \ref{ass:main}. will be replaced by what follows.
\begin{ass}
\label{ass:main2}
All items of Assumption \ref{ass:main}. are in force excepted 1. and 2. which are replaced by the following.
\begin{enumerate}
  \item There exist positive reals $L_{\Phi}$, $L_g$
%, $L_{\Lambda}$
 such that, for any $(t,t',y,y',z,z')\in [0,T]^2\times (\R^d)^2\times \R^2$,
    $$
    \vert \Phi (t,y,z)-\Phi (t',y',z')\vert \leq L_{\Phi}\,( \vert t-t'\vert^{\frac{1}{2}}+\vert y-y'\vert +\vert z-z'\vert ),
    $$
    $$
      \vert g (t,y,z)-g (t',y',z')\vert \leq L_{g}\,( \vert t-t'\vert^{\frac{1}{2}}+\vert y-y'\vert +\vert z-z'\vert) .
      $$
  \item There exists a positive real $L_{\Lambda}$ such that, for any $(t,t',y,y',z,z')\in [0,T]^2\times (\R^d)^2\times  \R^2$,
  $$
  \vert \Lambda (t,y,z)-\Lambda (t',y',z')\vert \leq L_{\Lambda}\,( \vert t-t'\vert^{\frac{1}{2}}+\vert y-y'\vert +\vert z-z'\vert ).
  $$
\end{enumerate}
% We also add a sixth item.
% \begin{enumerate}
% \setcounter{enumi}{5}
% \item The functions $s \in [0,T] \mapsto \Phi(s,0,0)$ and $s \in [0,T] \mapsto g(s,0,0)$ are bounded. $m_{\Phi}$ (resp. $m_g$) will denote the supremum $\sup_{s \in [0,T]} \vert \Phi(s,0,0) \vert$ (resp. $\sup_{s \in [0,T]} \vert g(s,0,0) \vert$). 
% \end{enumerate}
 \end{ass} 

We end this section by recalling important results established in 
 our companion paper \cite{LOR1}, for which Assumption \ref{ass:main}.
 is supposed to be satisfied.
 Let us first remark that the second equation of \eqref{eq:NSDE} can be 
rewritten as
 \begin{eqnarray} \label{Eum}
 u^m(t,y) = \int_{\shc^d} K(y-\omega_t) \exp \Big \{ \int_0^t 
\Lambda(s,\omega_s, u^m(s,\omega_s)) \Big \} dm(\omega), \; (t,x) \in [0,T] \times \R^d,
 \end{eqnarray}
 with $m = m_Y$ being the law of the process $Y$ on the canonical space $\shc^d$. \\
Indeed for every $m \in \shp(\shc^d)$, Theorem 3.1 of \cite{LOR1}
shows that equation \eqref{Eum} is well-posed and so it properly defines
a function $u^m$. 
The lemma below, established in Proposition 3.3 of \cite{LOR1}, states stability results on the function $(m,t,y) \mapsto u^m(t,y)$.
\begin{prop}
\label{lem:uu'}
We assume the validity of items $2.$, $3$ and $4.$ of Assumption \ref{ass:main}. \\%Let $u$ be a solution of~\eqref{eq:NSDE}.
The following assertions hold. 
\begin{enumerate} 
\item For any couple of probabilities $(m,m')\in \mathcal{P}_2(\mathcal{C}^d)\times \mathcal{P}_2(\mathcal{C}^d)$, for all $(t,y,y') \in [0,T] \times \shc^d \times \shc^d$,
we have
\begin{eqnarray}
\label{eq:uu'}
\vert u^m\big (t,y\big )-u^{m'}\big (t,y'\big )\vert^2\leq C_{K,\Lambda}(t)\left [ \vert y-y'\vert^2+\vert W_t(m,m')\vert ^2\right ] \ ,
\end{eqnarray}
where 
$C_{K,\Lambda}(t):=2C'_{K,\Lambda}(t)(t+2)(1+e^{2tC'_{K,\Lambda}(t)})$ with $C'_{K,\Lambda}(t)=2e^{2tM_{\Lambda}}(L^2_K+2M^2_KL^2_\Lambda t )$.
In particular the functions $C_{K, \Lambda}$ only depend on $M_K, L_K, M_\Lambda, L_\Lambda$ and $t$ and is increasing with $t$.
\item For any $(m,m') \in \shp(C^d) \times \shp(C^d)$,  for all $(t,y,y') \in [0,T] \times \shc^d \times \shc^d$, we have
\begin{eqnarray}
\label{eq:uu'2}
\vert u^m\big (t,y\big )-u^{m'}\big (t,y'\big )\vert^2\leq \mathfrak{C}_{K,\Lambda}(t)\left [ \vert y-y'\vert^2+\vert \widetilde{W}_t(m,m')\vert ^2\right ] \ ,
\end{eqnarray}
where $\mathfrak{C}_{K,\Lambda}(t) := 2e^{2tM_{\Lambda}}(\max(L_K,2M_K)^2 + 2M_{K}^2\max(L_{\Lambda},2M_{\Lambda})^2 t ) $.
 \item The function $ (m,t,x) \mapsto u^m(t,x)$ is continuous on $\shp(\shc^d) \times [0,T] \times \R^d,$ where $\shp(\shc^d)$ is endowed with the topology of weak convergence.
\item Suppose 
%to assumption~\ref{ass:main}) 
that $K \in W^{1,2}(\R^d)$. Then for any $(m,m')\in \mathcal{P}_2(\mathcal{C}^d)\times \mathcal{P}_2(\mathcal{C}^d)$, $t \in [0,T]$  
\begin{eqnarray}
\label{uu'L2}
\Vert u^m(t,\cdot)-u^{m'}(t,\cdot)\Vert_2^2 
 \leq   \tilde{C}_{K,\Lambda}(t)(1+2tC_{K,\Lambda}(t)) \vert W_t(m,m') \vert^2 \ ,
\end{eqnarray}
%More precisely, one can choose 
where 
$C_{K,\Lambda}(t):=2C'_{K,\Lambda}(t)(t+2)(1+e^{2tC'_{K,\Lambda}(t)})$ with $C'_{K,\Lambda}(t)=2e^{2tM_{\Lambda}}(L^2_K+2M^2_KL^2_\Lambda t )$ and
%Moreover, we obtain the following $L^2$ bound valid for any $t\in [0,T]$,
%\begin{equation}
%\label{eq:uu'L2}
%\Vert u^m(t,\cdot)-u^{m'}(t,\cdot)\Vert_2^2 
% \leq   \tilde{C}_{K,\Lambda}(t)(1+2tC_{K,\Lambda}(t)) \vert W_t(m,m') \vert^2 \ ,
%\end{equation}
%with $C''_{K,\Lambda}(t) := \left( C_{K,\Lambda}(t)+4L_K e^{2tM_{\Lambda}} + 2tC_{K,\Lambda}^2(t) \right)$ 
$\tilde{C}_{K,\Lambda}(t) := 2 e^{2tM_{\Lambda}}(2M_K L_{\Lambda}^2 t(t+1) 
+ \Vert \nabla K \Vert_2^2 )$, $\Vert\cdot\Vert_2$ being the standard 
$L^2(\R^d)$ or $L^2(\R^d,\R^d)$-norms. \\
% induced by the scalar product $(g,h) \in L^2(\R^d) \times L^2(\R^d) \mapsto \int_{\R^d} g(x)h(x) dx$. \\
In particular the functions $ t \mapsto C'_{K, \Lambda}(t)$ and $ t \mapsto C_{K, \Lambda}(t)$
 only depend on $M_K, L_K, M_\Lambda, L_\Lambda$  and are increasing with respect to $t$.
\item Suppose 
% to Assumption~\ref{ass:main}) 
that $\shf(K) \in L^1(\R^d)$. 
 Then there exists a constant $\bar{C}_{K,\Lambda}(t) > 0$ (depending only on $t,M_{\Lambda},L_{\Lambda},\Vert \shf(K) \Vert_1$) such that for any random probability $\eta : (\Omega,\shf) \longrightarrow (\shp_2(\shc^d),\shb(\shp(\shc^d)))$, for all $(t,m) \in [0,T] \times \shp(\shc^d)$
\begin{eqnarray}
\label{eq:uu'Linf}
  \E{ [  \Vert u^{\eta}(t,\cdot) - u^m(t,\cdot) \Vert_{\infty}^2 ] } & \leq & \bar{C}_{K,\Lambda}(t) \sup_{ \underset{\Vert \varphi \Vert_{\infty} \le 1}{\varphi \in \shc_b(\shc^d)}} 
 \E{ [ \vert \langle \eta - m , \varphi \rangle \vert^2 ]} \ ,
\end{eqnarray}
where we recall that $\shp(\shc^d)$ is endowed with the topology of weak convergence.
We remark that the expectation in both sides of \eqref{eq:uu'Linf} is taken w.r.t. the randomness of the random probability $\eta$.
\end{enumerate}
\end{prop}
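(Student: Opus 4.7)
Both $u^m$ and $u^{m'}$ satisfy the fixed point identity \eqref{Eum}, which can be rewritten $u^m(t,y)=\int K(y-\omega_t)V_t(\omega,u^m(\omega))dm(\omega)$. Given any coupling $\mu \in \Pi(m,m')$ (respectively in the relaxed set $\widetilde\Pi(m,m')$), the master identity
\[
u^m(t,y) - u^{m'}(t,y') = \int_{\shc^d\times\shc^d}\bigl[K(y-\omega_t)V_t(\omega,u^m(\omega))-K(y'-\omega'_t)V_t(\omega',u^{m'}(\omega'))\bigr]\,d\mu(\omega,\omega')
\]
is the basis for all five items; they differ only in how the right-hand side is bounded, and in each case the final step is a Gronwall-type closure because $u^m$ and $u^{m'}$ occur on both sides through $V_t$.

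For items 1 and 2, the integrand is split as $[K(y-\omega_t)-K(y'-\omega'_t)]V_t(\omega,u^m(\omega)) + K(y'-\omega'_t)[V_t(\omega,u^m(\omega))-V_t(\omega',u^{m'}(\omega'))]$, then bounded via the Lipschitz estimate on $K$ in Assumption \ref{ass:main}.4 and via \eqref{eq:Vmajor2}, with $V$ uniformly bounded by $e^{tM_\Lambda}$. Squaring, using $(a+b)^2 \le 2a^2+2b^2$, and applying Jensen's inequality to the $\int_0^s\,dr$ occurring in \eqref{eq:Vmajor2}, I integrate against an optimal (or $\varepsilon$-optimal) coupling for $W_t(m,m')$ to convert path differences into $W_s(m,m')^2$. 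Taking a supremum over $(y,y')$ and closing by Gronwall's lemma on the residual self-referential term $\int_0^t|u^m(s,\omega_s)-u^{m'}(s,\omega'_s)|^2\,ds$ yields $C_{K,\Lambda}(t)$. Item 2 runs identically, except that the $\widetilde W_t$-coupling only controls the truncated cost $|\cdot|^2\wedge 1$, so the Lipschitz constants of $K$ and $\Lambda$ must be replaced by the coarser bounds $\max(L_K,2M_K)$ and $\max(L_\Lambda,2M_\Lambda)$, producing $\mathfrak{C}_{K,\Lambda}$. Item 3 is then a corollary: weak convergence $m_n\to m$ together with a Portmanteau argument gives $\widetilde W_{t_n}(m_n,m)\to 0$, and joint continuity in $(t,y)$ follows by dominated convergence inside \eqref{Eum} using the uniform envelope $M_Ke^{TM_\Lambda}$.

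For item 4 I specialize to $y=y'$ and exploit the mean-value representation
\[
K(y-\omega_t)-K(y-\omega'_t) = (\omega_t-\omega'_t)\cdot\int_0^1\nabla K\bigl(y-\omega'_t-\alpha(\omega_t-\omega'_t)\bigr)d\alpha.
\]
Cauchy-Schwarz under the coupling integral and Fubini on $y\in\R^d$ (using translation invariance of Lebesgue measure) turn the kernel contribution into $\|\nabla K\|_2^2\, W_t(m,m')^2$. The residual contribution, arising from replacing $V_t(\omega,u^m)$ by $V_t(\omega',u^{m'})$, is controlled by item 1 via \eqref{eq:Vmajor2} after one more integration in $y$; this is what produces the compound factor $\tilde C_{K,\Lambda}(t)(1+2tC_{K,\Lambda}(t))$.

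Item 5 is the most delicate. The key idea is Fourier inversion: writing $K(y-\omega_t)=(2\pi)^{-d/2}\int_{\R^d}\shf(K)(\xi)e^{i\xi\cdot(y-\omega_t)}d\xi$ and plugging this into \eqref{Eum} decomposes $u^\eta(t,y)-u^m(t,y)$ into (a) a term proportional to $\int\shf(K)(\xi)e^{i\xi\cdot y}\langle \eta-m,\,\omega\mapsto e^{-i\xi\cdot\omega_t}V_t(\omega,u^\eta(\omega))\rangle d\xi$, whose sup norm in $y$ is bounded by $(2\pi)^{-d/2}\|\shf(K)\|_1 e^{TM_\Lambda}$ times the supremum of $|\langle\eta-m,\varphi\rangle|$ over $\varphi\in\shc_b(\shc^d)$ with $\|\varphi\|_\infty\le 1$ (the real and imaginary parts of $e^{-i\xi\cdot\omega_t}V_t(\omega,u^\eta(\omega))/e^{TM_\Lambda}$ are admissible test functions, uniformly in $\xi$); and (b) a residual integral involving $V_t(\omega,u^\eta)-V_t(\omega,u^m)$, bounded by $\int_0^t\|u^\eta(s,\cdot)-u^m(s,\cdot)\|_\infty^2 ds$ via \eqref{eq:Vmajor2}. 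Squaring, taking expectation over the randomness of $\eta$, and applying Gronwall's lemma yields \eqref{eq:uu'Linf} with the announced $\bar C_{K,\Lambda}(t)$. \textbf{The main obstacle}, common to all items, is the self-referential presence of $u$ inside $V_t$: every naive bound produces an integral inequality that has to be closed by Gronwall; the extra subtle point in item 5 is to observe that Fourier inversion delivers the difference as an integral over $\xi$ of bounded test functions with $\xi$-uniform sup norm, exactly the form required by the right-hand side of \eqref{eq:uu'Linf}.
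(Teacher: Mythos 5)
This proposition is not actually proved in the paper: it is recalled from Proposition 3.3 of \cite{LOR1}, so the comparison is with that companion-paper argument, whose structure is visible in the constants quoted here. For items 1--4 your route is essentially that one: the fixed-point identity \eqref{Eum} written under an ($\varepsilon$-)optimal coupling, the split of the integrand into a $K$-increment times $V$ plus $K$ times a $V$-increment, the bounds \eqref{eq:Vmajor1}--\eqref{eq:Vmajor2}, Cauchy--Schwarz in time, and a Gronwall closure on $\int_0^t\int \vert u^m(s,\omega_s)-u^{m'}(s,\omega'_s)\vert^2 d\mu\, ds$ (note you do not literally take a supremum over $(y,y')$ --- the bound depends on $\vert y-y'\vert$; one evaluates the pointwise estimate along the coupled paths and integrates against $\mu$, which is what your Gronwall quantity amounts to); truncated Lipschitz bounds $\max(L_K,2M_K)$, $\max(L_\Lambda,2M_\Lambda)$ for item 2 (be aware that the Gronwall closure cannot be dispensed with there either, so your claim to recover exactly the displayed $\mathfrak{C}_{K,\Lambda}$ is optimistic); $\Vert K(\cdot-a)-K(\cdot-b)\Vert_2\le \Vert\nabla K\Vert_2\,\vert a-b\vert$ together with item 1 for item 4; and metrization of weak convergence by the bounded-cost distance $\widetilde W_T$ plus dominated convergence for item 3. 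All of this is sound in outline.

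Item 5, however, contains a genuine gap as written. After Fourier inversion you pair $\eta-m$ with the test functions $\omega\mapsto e^{-i\xi\cdot\omega_t}V_t(\omega,u^{\eta}(\omega))$ and declare them admissible for the right-hand side of \eqref{eq:uu'Linf}. But that right-hand side is $\sup_{\varphi}\E[\vert\langle\eta-m,\varphi\rangle\vert^2]$ with the supremum over \emph{deterministic} $\varphi\in\shc_b(\shc^d)$ taken outside the expectation, whereas your test function depends on the realization of $\eta$ through $u^{\eta}$; for $\eta$-dependent test functions the domination fails in general (take $\eta$ the empirical measure of $N$ i.i.d. samples of a diffuse $m$: the deterministic supremum is of order $1/N$, while a unit-norm test function concentrated near the atoms of $\eta$ makes $\langle\eta-m,\varphi\rangle$ of order one). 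This is exactly the regime in which \eqref{eq:uu'Linf} is used in the paper, with $\eta=S^N(\mathbf{\bar{Y}})$, so the point is material. The repair is to perform the add-and-subtract the other way: write $u^{\eta}(t,y)-u^m(t,y)=\int K(y-\omega_t)\bigl[V_t(\omega,u^{\eta}(\omega))-V_t(\omega,u^m(\omega))\bigr]d\eta(\omega)+\langle\eta-m,\,K(y-\omega_t)V_t(\omega,u^m(\omega))\rangle$, apply the Fourier argument only to the second term, whose test functions $e^{-i\xi\cdot\omega_t}V_t(\omega,u^m(\omega))$ are deterministic, continuous (by item 3) and bounded by $e^{tM_\Lambda}$ uniformly in $\xi$, and bound the first term, now integrated against the a.s. probability measure $\eta$, by \eqref{eq:Vmajor2}, which yields the Gronwall term $\int_0^t\E[\Vert u^{\eta}(s,\cdot)-u^m(s,\cdot)\Vert_\infty^2]\,ds$; the a priori bound $\Vert u^{\eta}(s,\cdot)-u^m(s,\cdot)\Vert_\infty\le 2M_Ke^{TM_\Lambda}$ makes the Gronwall step legitimate. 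With that modification your argument for item 5 goes through.
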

\begin{rem}
\label{R26}
The map
 $\displaystyle{ d_2^{\Omega} : (\nu,\mu) \mapsto \sqrt{\sup_{ \underset{\Vert \varphi \Vert_{\infty} \le 1}{\varphi \in \shc_b(\shc^d)}} \E[ \vert \langle \nu - \mu, \varphi \rangle \vert^2 ] } }$ defines a (homogeneous) distance on $\shp^{\Omega}(\shc^d)$.
\end{rem}
The lemma  below was proved in Lemma 7.1 in \cite{LOR1}.
\begin{lem}
\label{lem:yy'}
Let $r:\,[0,T]\mapsto [0,T]$ be a non-decreasing function such that $r(s)\leq s$ for any $s\in [0,T]$ and $Y_0$ be a random variable admitting $\zeta_0$ as law. \\
Let $\shu : (t,y) \in [0,T] \times \shc^d \rightarrow \R $ (respectively $\shu' : (t,y) \in [0,T] \times \shc^d \rightarrow \R$),
 be a given Borel function  
%/ measurable w.r.t. the previsible $\sigma$-algebra on $[0,T] \times \shc^d$ ) 
such that for all $t \in [0,T]$, there is a Borel map $\shu_t: \shc ([0,t],\R^d) \rightarrow \R$ (resp. $\shu^{'}_t: \shc ([0,t],\R^d) 
\rightarrow \R$) such that $\shu(t,\cdot) = \shu_t(\cdot)$ (resp. $\shu'(t,\cdot) = \shu'_t(\cdot)$).
%$\shb_t(\shc^d) := \sigma(X_u, 0 \leq u \leq t)$ adapted. 

%(VOIR POUR CETTE HYP: and Lipschitz w.r.t. to the space variable uniformly w.r.t. to the time variable).
%Let $u$, respectively $u'$ be two $\shf_t$-adapted random fields whose trajectories are Lipschitz w.r.t. the time and space variables uniformly, almost surely. 
Then the following two assertions hold. 
\begin{enumerate}
\item Consider $Y$ (resp. $Y'$) a solution of the following SDE for $v=\shu$ (resp. $v=\shu'$):
\begin{equation}
 \label{eq:YY'}
 \begin{array}{l}
 Y_t=Y_0+\int_0^t\Phi(r(s),Y_{r(s)},v(r(s),Y_{\cdot \wedge r(s)}))dW_s+\int_0^t g(r(s),Y_{r(s)},v(r(s),Y_{\cdot \wedge r(s)}))ds\ ,\quad \textrm{for any}\ t\in [0,T] \ ,
 \end{array}
 \end{equation}
 where, we emphasize that for all $\theta \in [0,T]$, $Z_{\cdot \wedge \theta} := \{Z_u, 0 \leq u \leq \theta \} 
\in \shc([0,\theta],\R^d)$ for any continuous process $Z$. 
For any $ a \in [0,T]$, we have
\begin{equation}
 \label{eq:YY'Stab}
\E [\sup_{t\leq a} \vert Y'_t-Y_t\vert ^2]
\leq 
C_{\Phi,g}(T)\E\left [\int_0^a \vert \shu(r(t),Y_{\cdot \wedge r(t)})-\shu'(r(t),Y'_{\cdot \wedge r(t)})
\vert^2dt \right ]\ ,
\end{equation}
where $C_{\Phi,g}(T)=12(4L^2_{\Phi}+TL^2_g)e^{12T(4L^2_{\Phi}+TL^2_g)}$.
\item Suppose moreover that $\Phi$ and $g$ are $\frac{1}{2}$-H\"older w.r.t. the time and Lipschitz w.r.t. the space variables i.e. there exist some positive constants $L_{\Phi}$ and $L_g$ such that for any $(t,t',y,'y',z,z')\in [0,T]^2\times \R^{2d}\times \R^2$ 
\begin{equation}
\label{eq:PhigLipt}
\left \{
\begin{array}{l}
\vert \Phi(t,y,z)-\Phi(t',y',z')\vert \leq L_{\Phi} (\vert t-t'\vert^{\frac{1}{2}} +\vert y-y'\vert +\vert z-z'\vert)\\
\vert g(t,y,z)-g(t',y',z')\vert \leq L_{g} (\vert t-t'\vert^{\frac{1}{2}} +\vert y-y'\vert +\vert z-z'\vert)\ .
\end{array}
\right . 
\end{equation}
Let  $r_1, r_2:\,[0,T]\mapsto [0,T]$ being two non-decreasing functions verifying $r_1(s)\leq s$ and $r_2(s)\leq s$ for any $s\in [0,T]$. 
Let $Y$ (resp. $Y'$) be a solution  of~\eqref{eq:YY'} for $v=\shu$ and $r=r_1$ (resp. $v=\shu'$ and $r=r_2$). %with  $r_1, r_2:\,[0,T]\mapsto [0,T]$ being two nondecreasing functions verifying $r_1(s)\leq s$ and %$r_2(s)\leq s$ for any $s\in [0,T]$. 
Then for any $ a \in [0,T]$, the following inequality holds: 
\begin{eqnarray}
 \label{eq:YY'Stabr'}
\E [\sup_{t\leq a} \vert Y'_t-Y_t\vert ^2]
&\leq &
C_{\Phi,g}(T)\left (\Vert r_1-r_2\Vert_{L^1([0,T])} +\int_0^a\E[\vert Y'_{r_1(t)}-Y'_{r_2(t)}\vert^2] dt\right .\nonumber \\
&&
\left .+\E \left[\int_0^a\vert  \shu(r_1(t),Y_{\cdot \wedge r_1(t)})-\shu'(r_2(t),Y'_{\cdot \wedge r_2(t)})\vert^2dt´\right]\right )\ ,
\end{eqnarray}
%where for all $\theta \in [0,T]$, $Z_{\cdot \wedge \theta} := \{Z_u, 0 \leq u \leq \theta \} \in \shc^d([0,\theta],\R^d)$ for any continuous process $Z$ and 
%where $\Vert \cdot \Vert_1$ is the $L^1([0,T])$-norm.  
%where $C_{\Phi,g}(T)=12(4L^2_{\Phi}+TL^2_g)e^{12T(4L^2_{\Phi}+TL^2_g)}$. 
\end{enumerate}
\end{lem}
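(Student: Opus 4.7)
The plan is to treat both parts by the same classical SDE-stability scheme: write $Y'_t - Y_t$ as a sum of a stochastic integral and a Lebesgue integral, square and apply $(a+b)^2\leq 2(a^2+b^2)$, then estimate each piece via Doob's $L^2$ maximal inequality (for the stochastic part) and Cauchy--Schwarz (for the drift part), invoke the Lipschitz/Hölder regularity of $\Phi$ and $g$, and close by Grönwall's lemma.

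For assertion 1, I would start from
\[
Y'_t-Y_t=\int_0^t\!\bigl[\Phi(r(s),Y'_{r(s)},\shu'(r(s),Y'_{\cdot\wedge r(s)}))-\Phi(r(s),Y_{r(s)},\shu(r(s),Y_{\cdot\wedge r(s)}))\bigr]dW_s
\]
plus the analogous Lebesgue integral involving $g$, with $Y_0'=Y_0$. After squaring and applying Doob's inequality with constant $4$, the spatial Lipschitz assumption yields two categories of integrands: one controlled by $|Y'_{r(s)}-Y_{r(s)}|^2$ and one controlled by the target $|\shu(r(s),Y_{\cdot\wedge r(s)})-\shu'(r(s),Y'_{\cdot\wedge r(s)})|^2$. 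Since $r(s)\leq s$, the first category is bounded by $\sup_{u\leq s}|Y'_u-Y_u|^2$ and is absorbed by Grönwall, which produces the exponential factor in $C_{\Phi,g}(T)=12(4L_\Phi^2+TL_g^2)e^{12T(4L_\Phi^2+TL_g^2)}$; the second category stays on the right-hand side and gives \eqref{eq:YY'Stab}.

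For assertion 2, the only modification is that the increments of $\Phi$ and $g$ are evaluated at $(r_1,r_2)$ instead of at a common $r$, so we split each coefficient difference via three applications of \eqref{eq:PhigLipt}. This produces (i) a time term $L_\Phi|r_1(s)-r_2(s)|^{1/2}$ whose square integrates to $\|r_1-r_2\|_{L^1([0,T])}$; (ii) a spatial term $|Y_{r_1(s)}-Y'_{r_2(s)}|$, which we decompose via the triangle inequality as $|Y_{r_1(s)}-Y'_{r_1(s)}|+|Y'_{r_1(s)}-Y'_{r_2(s)}|$: the first summand is $\leq \sup_{u\leq s}|Y_u-Y'_u|$ and feeds Grönwall, the second contributes the $\int_0^a\E[|Y'_{r_1(t)}-Y'_{r_2(t)}|^2]\,dt$ term; and (iii) the $\shu/\shu'$ term, which is exactly the last quantity on the right-hand side of \eqref{eq:YY'Stabr'}. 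Grönwall then closes the estimate with the same constant $C_{\Phi,g}(T)$.

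No genuinely new idea is needed: the main obstacle is purely bookkeeping. The delicate point is to ensure that every decomposition of the coefficient differences separates cleanly into terms that are either (a) bounded by $\sup_{u\leq s}|Y'_u-Y_u|^2$ and therefore eligible for Grönwall absorption, or (b) one of the terms that is allowed on the right-hand side of \eqref{eq:YY'Stab}/\eqref{eq:YY'Stabr'}. In particular one must keep $Y$ paired with $r_1$ and $Y'$ with $r_2$ throughout the triangle-inequality steps of Part 2, and track the multiplicative constants through Doob, Cauchy--Schwarz, and $(a+b)^2\leq 2(a^2+b^2)$ in order to recover the explicit prefactor $12(4L_\Phi^2+TL_g^2)$ prior to exponentiation.
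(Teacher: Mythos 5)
Your proposal is correct and follows exactly the classical scheme (decompose $Y'-Y$ into the stochastic and drift integrals, square with $(a+b)^2\le 2(a^2+b^2)$, apply Doob's $L^2$ inequality and Cauchy--Schwarz, use the Lipschitz/H\"older bounds to isolate a Gr\"onwall term $\sup_{u\le s}|Y'_u-Y_u|^2$ plus the admissible right-hand-side terms), which is precisely the route of the proof the paper does not reproduce but cites as Lemma 7.1 of \cite{LOR1}. Two harmless remarks: your bookkeeping actually yields prefactors $4(4L_\Phi^2+TL_g^2)$ in part 1 and $8(4L_\Phi^2+TL_g^2)$ in part 2 rather than exactly $12(4L_\Phi^2+TL_g^2)$ (a smaller constant of course implies the stated bound), and the Gr\"onwall step tacitly uses that $\E[\sup_{t\le T}|Y'_t-Y_t|^2]<\infty$, which should be justified either by the linear-growth moment estimates or by a routine localization with stopping times followed by Fatou.
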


The theorem below was the object of Theorem 3.9 in \cite{LOR1}.
\begin{thm}
\label{prop:NSDE}
%Under Assumption~\ref{ass:main}, 
Under Assumption \ref{ass:main}, the McKean type SDE~\eqref{eq:NSDE} admits strong existence and pathwise uniqueness.
\end{thm}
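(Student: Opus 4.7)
The plan is to set up a Picard fixed point argument on $(\mathcal{P}_2(\mathcal{C}^d), W_T)$. For each $m \in \mathcal{P}_2(\mathcal{C}^d)$, Theorem 3.1 of \cite{LOR1} produces a well-defined function $u^m$ via \eqref{Eum}, and Proposition \ref{lem:uu'} item 1 provides the key stability estimate \eqref{eq:uu'}. I would then freeze the law, solve the classical SDE with drift and diffusion coefficients depending on $u^m$, and iterate.

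More precisely, given $m \in \mathcal{P}_2(\mathcal{C}^d)$, consider the standard Itô equation
\begin{equation*}
Y^m_t = Y_0 + \int_0^t \Phi(s, Y^m_s, u^m(s, Y^m_s))\, dW_s + \int_0^t g(s, Y^m_s, u^m(s, Y^m_s))\, ds.
\end{equation*}
Because $\Phi$ and $g$ are Lipschitz in $(y,z)$ by Assumption \ref{ass:main}.1 and because \eqref{eq:uu'} (with $m=m'$) yields that $y \mapsto u^m(t,y)$ is Lipschitz uniformly in $t$, the coefficients of this SDE are space-Lipschitz with linear growth. Classical strong existence and pathwise uniqueness apply, producing a unique strong solution $Y^m$ with finite second moment of its supremum; define then the map $\Psi(m) := \text{Law}(Y^m) \in \mathcal{P}_2(\mathcal{C}^d)$.

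The heart of the argument is a Gronwall-type contraction estimate for $\Psi$. Given $m, m' \in \mathcal{P}_2(\mathcal{C}^d)$, apply Lemma \ref{lem:yy'} item 1 with $r(s) = s$, taking $\mathcal{U}(t, y) := u^m(t, y_t)$ and $\mathcal{U}'(t, y) := u^{m'}(t, y_t)$, which are adapted in the required sense. This gives
\begin{equation*}
\mathbb{E}\bigl[\sup_{t \le a} |Y^{m'}_t - Y^m_t|^2\bigr] \le C_{\Phi, g}(T)\, \mathbb{E}\Bigl[\int_0^a |u^m(t, Y^m_t) - u^{m'}(t, Y^{m'}_t)|^2\, dt\Bigr].
\end{equation*}
Injecting \eqref{eq:uu'} from Proposition \ref{lem:uu'} and using that $W_a(\Psi(m), \Psi(m'))^2 \le \mathbb{E}[\sup_{t \le a}|Y^m_t - Y^{m'}_t|^2]$ by choosing the natural coupling, I obtain
\begin{equation*}
W_a(\Psi(m), \Psi(m'))^2 \le C \int_0^a \bigl(W_t(\Psi(m), \Psi(m'))^2 + W_t(m, m')^2\bigr)\, dt,
\end{equation*}
so Gronwall yields $W_a(\Psi(m), \Psi(m'))^2 \le C' \int_0^a W_t(m, m')^2\, dt$ for some constant $C'$ depending only on $T$ and the Lipschitz/bound data of Assumption \ref{ass:main}.

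Iterating this inequality along powers of $\Psi$ gives
\begin{equation*}
W_T(\Psi^n(m), \Psi^n(m'))^2 \le \frac{(C' T)^n}{n!} W_T(m, m')^2,
\end{equation*}
so some iterate of $\Psi$ is a strict contraction on the complete metric space $(\mathcal{P}_2(\mathcal{C}^d), W_T)$. The Banach fixed point theorem then produces a unique $m^\star$ with $\Psi(m^\star) = m^\star$. Setting $Y := Y^{m^\star}$ and $u := u^{m^\star}$, the equality $m^\star = \text{Law}(Y)$ makes the second line of \eqref{eq:NSDE} hold by definition of $u^{m^\star}$, yielding strong existence. For pathwise uniqueness, any two solutions $(Y, u)$ and $(Y', u')$ driven by the same Brownian motion and same $Y_0$ must have laws that are fixed points of $\Psi$, hence equal to $m^\star$; consequently $u = u' = u^{m^\star}$, and pathwise uniqueness of the frozen classical SDE concludes.

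The main obstacle is not the algebra but tracking that the stability \eqref{eq:uu'} of Proposition \ref{lem:uu'} is strong enough to feed into Lemma \ref{lem:yy'} and close the Gronwall loop without requiring smallness of $T$; the iteration argument bypasses this by using the $\frac{(C' T)^n}{n!}$ decay rather than insisting on one-step contractivity. Ensuring that $\Psi$ actually maps $\mathcal{P}_2(\mathcal{C}^d)$ into itself — i.e.\ that $Y^m$ has a finite supremum second moment — follows from standard moment estimates applied to the SDE thanks to Assumption \ref{ass:main}.6 controlling $\Phi(\cdot,0,0)$ and $g(\cdot,0,0)$ and the uniform boundedness of $u^m$ inherited from the boundedness of $K$ and $\Lambda$.
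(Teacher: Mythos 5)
This paper never proves Theorem \ref{prop:NSDE} itself: it is quoted from Theorem 3.9 of \cite{LOR1}, and the ingredients recalled here (Proposition \ref{lem:uu'} and Lemma \ref{lem:yy'}) are exactly the tools for the argument you give. Your freeze-the-law construction, the map $\Psi(m)=\mathcal{L}(Y^m)$, the Gronwall estimate obtained by coupling through the same Brownian motion, the factorial bound making an iterate of $\Psi$ a contraction on the complete space $(\mathcal{P}_2(\shc^d),W_T)$, and the identification of the law of any solution of \eqref{eq:NSDE} as a fixed point of $\Psi$ (via uniqueness in \eqref{Eum} and pathwise uniqueness of the frozen SDE) constitute a correct proof and are, in substance, the intended one. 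One slip to repair: after inserting \eqref{eq:uu'} into \eqref{eq:YY'Stab}, the integrand is $\E[\sup_{s\le t}\vert Y^m_s-Y^{m'}_s\vert^2]+W_t(m,m')^2$, and you may not replace the expectation by the smaller quantity $W_t(\Psi(m),\Psi(m'))^2$ as your displayed intermediate inequality suggests; apply Gronwall directly to $a\mapsto \E[\sup_{t\le a}\vert Y^m_t-Y^{m'}_t\vert^2]$, obtain $\E[\sup_{t\le a}\vert Y^m_t-Y^{m'}_t\vert^2]\le C'\int_0^a W_t(m,m')^2\,dt$, and only then dominate $W_a(\Psi(m),\Psi(m'))^2$ by the left-hand side — the iteration and the fixed-point conclusion are unchanged. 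In the uniqueness step it is also worth recording explicitly that any solution $Y$ of \eqref{eq:NSDE} satisfies $\E[\sup_{t\le T}\vert Y_t\vert^2]<\infty$ (boundedness of $u$ by $M_Ke^{TM_\Lambda}$, linear growth of $\Phi,g$ and items 5--6 of Assumption \ref{ass:main}), so that its law indeed lies in $\mathcal{P}_2(\shc^d)$ and \eqref{eq:uu'} is applicable, as you indicate at the end.
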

For a precise formulation of the notion of existence and uniqueness for the McKean type equation  \eqref{eq:NSDE}
we refer to Definition 2.6 of \cite{LOR1}.\\
\\
We finally recall  an important non-anticipating property of the map $(m,t,x) \mapsto u^m(t,x)$, stated
in \cite{LOR1}.

% Let us fix $t \in  [0,T]$, $m_t \in \mathcal{P}(\mathcal{C}_t^d)$. 

\begin{defi} \label{D38}   Let us fix $t \in  [0,T]$.
Given a non-negative Borel measure $m$ on $(\mathcal{C}^d,\shb(\shc^d))$. 
From now on,  $m_t$ will denote the (unique) induced measure on 
$(\mathcal{C}_t^d,\shb(\shc^d_t))$ (with $\mathcal{C}_t^d := \mathcal{C}([0,t],\R^d)$) defined by
$$
\int_{\mathcal{C}_t^d} F(\phi)m_t(d\phi) = \int_{\mathcal{C}^d} F(\phi_{|_{[0,t]}})m(d\phi),
$$
where $F : \mathcal{C}_t^d \longrightarrow \R$ is bounded and continuous.
\end{defi}

\begin{rem} \label{R38}
Let $t \in  [0,T], m = \delta_{\xi} \;, \xi \in \mathcal{C}^d$. 
The induced measure $m_t$, on $\shc_t^d$, is $\delta_{(\xi_r | 0 \leq r \leq t)} $.
\end{rem}
 For each $t \in [0,T]$, the same construction as the one carried on in Theorem 3.1 in \cite{LOR1} allows us to define the unique solution to
\begin{equation}
\label{u_mt}
\begin{array}{l}
u^{m_t}(s,y) = \int_{\mathcal{C}_t^d} K(y-X_s(\omega)) 
\exp\left(\int_0^s \Lambda(r,X_r(\omega),u^{m_t}(r,X_r(\omega))) dr \right)
m_t(d\omega) \quad  \ \forall s\in [0,t]\ .
\end{array}
\end{equation}
The proposition and corollary  below were the object of Proposition 3.7 and Corollary 3.8 in \cite{LOR1}.
\begin{prop}
\label{non-anticip}
Under Assumption \ref{ass:main}, we have 
$$
\forall (s,y) \in [0,t] \times \R^d, \; u^{m}(s,y) = u^{m_t}(s,y).
$$
\end{prop}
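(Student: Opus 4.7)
The plan is to show that the restriction of $u^m$ to $[0,t]\times\R^d$ satisfies the same fixed-point equation \eqref{u_mt} that defines $u^{m_t}$, and then invoke uniqueness of such fixed points.

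\textbf{Step 1 (non-anticipativity of the integrand).} Fix $s\in[0,t]$ and $y\in\R^d$. The defining equation \eqref{Eum} reads
$$
u^m(s,y)=\int_{\shc^d} K(y-\omega_s)\exp\!\Bigl(\int_0^s \Lambda(r,\omega_r,u^m(r,\omega_r))\,dr\Bigr)\,m(d\omega).
$$
Since $s\le t$ and $u^m(r,\cdot):\R^d\to\R$ is evaluated pointwise at $\omega_r$, the integrand depends on $\omega$ only through its restriction $\pi_t(\omega):=\omega_{|[0,t]}\in\shc^d_t$. Moreover $u^m$ is jointly Borel in $(r,x)$ (it is even continuous, by Proposition \ref{lem:uu'}.3), so the integrand may be written as $\widetilde F(\pi_t(\omega))$ for some bounded Borel function $\widetilde F:\shc^d_t\to\R$.

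\textbf{Step 2 (change of measure).} By Definition \ref{D38}, $m_t$ is the image measure of $m$ under the restriction map $\pi_t$, the identity being stated for bounded continuous test functions. A standard monotone class argument extends it to all bounded Borel $\widetilde F:\shc^d_t\to\R$, so
$$
\int_{\shc^d}\widetilde F(\pi_t(\omega))\,m(d\omega)=\int_{\shc^d_t}\widetilde F(\phi)\,m_t(d\phi).
$$
Plugging this into the expression of Step 1 yields, for every $(s,y)\in[0,t]\times\R^d$,
$$
u^m(s,y)=\int_{\shc^d_t} K(y-X_s(\phi))\exp\!\Bigl(\int_0^s \Lambda(r,X_r(\phi),u^m(r,X_r(\phi)))\,dr\Bigr)\,m_t(d\phi).
$$

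\textbf{Step 3 (uniqueness).} The previous identity is exactly equation \eqref{u_mt} written with the unknown $v(s,y):=u^m(s,y)$ restricted to $(s,y)\in[0,t]\times\R^d$. Applying Theorem 3.1 of \cite{LOR1} at horizon $t$ (which gives existence and uniqueness of a solution to \eqref{u_mt} under Assumption \ref{ass:main}) and observing that by definition $u^{m_t}$ is the unique such solution, we conclude $u^m(s,y)=u^{m_t}(s,y)$ for all $(s,y)\in[0,t]\times\R^d$.

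The only delicate point is Step 2, namely extending the image-measure identity from bounded continuous to bounded Borel integrands, which is routine but must be stated since the integrand involves the implicitly-defined $u^m$. Once this non-anticipative change of measure is available, Steps 1 and 3 are almost immediate, and the argument does not require any fresh Lipschitz estimate.
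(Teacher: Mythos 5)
Your argument is correct: the integrand defining $u^m(s,y)$ for $s\le t$ depends on $\omega$ only through $\omega_{|[0,t]}$, the pushforward identity of Definition \ref{D38} (extended from bounded continuous to bounded Borel integrands, or even used directly, since by Proposition \ref{lem:uu'}.3 and dominated convergence the integrand is in fact a bounded \emph{continuous} function of the restricted path) converts the integral against $m$ into one against $m_t$, and then the uniqueness of the solution of \eqref{u_mt} at horizon $t$ identifies the restriction of $u^m$ with $u^{m_t}$. The present paper gives no proof of this statement, recalling it from Proposition 3.7 of \cite{LOR1}, so there is nothing internal to compare against; your route (non-anticipativity of the fixed-point equation plus uniqueness from Theorem 3.1 of \cite{LOR1}) is the natural one and is complete, the only point deserving the care you already gave it being the measure-theoretic change of variables in Step 2.
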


%\begin{proof}
%By definition of $m_t$, it follows that $u^m(s,y)|_{[0,t] \times \R^d}$ is a solution of \eqref{u_mt}. Invoking the uniqueness of \eqref{u_mt} ends the proof.
%\end{proof}
\begin{corro} \label{Canticip}
Let $N \in \N$, $\xi^1, \cdots, \xi^i, \cdots, \xi^N$ be $(\mathcal{G}_t)$-adapted continuous processes, where $\mathcal{G}$ is a filtration 
(defined on some probability space) 
fulfilling the usual conditions. Let $m(d\omega) = \frac{1}{N} \sum_{i=1}^{N} \delta_{\xi^i}(d\omega)$.
Then, $(u^m(t,y))$ is a $(\mathcal{G}_t)$-adapted random field, i.e.
for any $(t,y) \in [0,T] \times \R^d$, the process is $(\mathcal{G}_t)$-adapted.

\end{corro}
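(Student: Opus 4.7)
The plan is to combine the non-anticipativity result Proposition \ref{non-anticip} with the measurability of the map $m \mapsto u^m(t,\cdot)$ that follows from Proposition \ref{lem:uu'}, item 3. The only information about the empirical path measure $m$ that matters for the evaluation of $u^m(t,\cdot)$ is its restriction $m_t$ to paths on $[0,t]$, and this restriction is visibly $\mathcal{G}_t$-measurable in our setting.

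More precisely, by Proposition \ref{non-anticip},
$$
u^m(t,y) = u^{m_t}(t,y),
$$
so it suffices to prove that $\omega \mapsto u^{m_t(\omega)}(t,y)$ is $\mathcal{G}_t$-measurable. Applying Remark \ref{R38} to each atom of the empirical measure yields
$$
m_t = \frac{1}{N}\sum_{i=1}^N \delta_{(\xi^i_r)_{0\le r\le t}},
$$
viewed as an element of $\shp(\shc_t^d)$. Since each $\xi^i$ is continuous and $(\mathcal{G}_t)$-adapted, the restricted path $(\xi^i_r)_{0\le r\le t}$ is a $\mathcal{G}_t$-measurable $\shc_t^d$-valued random element (standard fact: $\shb(\shc_t^d)$ is generated by the coordinate projections). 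Hence $m_t$ is a $\mathcal{G}_t$-measurable random probability on $\shc_t^d$ in the sense of Definition \ref{def_r.m}.

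It then remains to observe that the deterministic map $\mu \in \shp(\shc_t^d) \mapsto u^{\mu}(t,y) \in \R$ is Borel measurable. This is provided by the analogue on $\shc_t^d$ of Proposition \ref{lem:uu'} item 3, which asserts the joint continuity of $(\mu,s,y) \mapsto u^{\mu}(s,y)$ on $\shp(\shc_t^d) \times [0,t] \times \R^d$ when $\shp(\shc_t^d)$ carries the weak convergence topology (the fixed-point construction in \eqref{u_mt} and its associated stability estimates transfer verbatim). Composing this measurable map with the $\mathcal{G}_t$-measurable random probability $m_t$ yields the desired $\mathcal{G}_t$-measurability of $u^m(t,y) = u^{m_t}(t,y)$.

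The main obstacle I foresee lies not in any single analytic estimate but in being careful about the identification of $m_t$ as a \emph{random probability} with values in $\shp(\shc_t^d)$: one must check that the restriction map sending a $(\mathcal{G}_t)$-adapted continuous $\R^d$-process to a $\shc_t^d$-valued random variable is measurable with the correct target $\sigma$-field $\shb(\shp(\shc_t^d))$, which is the point at which Definition \ref{def_r.m} and Remark \ref{RMeasure} are used explicitly. Once this is in place the statement follows by composition of measurable maps.
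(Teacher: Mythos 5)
Your argument is correct, and it is essentially the intended one: this paper does not actually reprove Corollary \ref{Canticip} (it is recalled from Corollary 3.8 of \cite{LOR1}), but the ingredients it assembles for that purpose --- Proposition \ref{non-anticip}, Remark \ref{R38}, Definition \ref{def_r.m} with Remark \ref{RMeasure}, and the regularity of $m \mapsto u^m(t,y)$ from Proposition \ref{lem:uu'} --- are exactly the ones you compose, so your route matches the expected proof. The only step not literally available in the text is the ``verbatim'' analogue of item 3 of Proposition \ref{lem:uu'} on $\shp(\shc_t^d)$; this is harmless (the construction \eqref{u_mt} and the estimate \eqref{eq:uu'2}, whose right-hand side only involves paths up to time $t$, carry over unchanged), but you could avoid it entirely by staying on $\shc^d$: replace each $\xi^i$ by the stopped path $\xi^i_{\cdot \wedge t}$, which is a $\mathcal{G}_t$-measurable $\shc^d$-valued random element, set $m^{(t)} := \frac{1}{N}\sum_{i=1}^N \delta_{\xi^i_{\cdot \wedge t}}$, observe that $m$ and $m^{(t)}$ induce the same measure on $\shc_t^d$ so that Proposition \ref{non-anticip} yields $u^m(t,y) = u^{m^{(t)}}(t,y)$, and then conclude with Remark \ref{RDelta} and item 3 of Proposition \ref{lem:uu'} exactly as stated. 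Either way the conclusion follows, as you say, by composition of measurable maps.
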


\section{Particle systems approximation and propagation of chaos  }
%%%%%%%%%%%%%%%%%%%%%%%%%%%%%%%%%%%%%%%%%%%%%%
\label{SChaos}

\setcounter{equation}{0}

In this section, we introduce an interacting particle system
%focus on the propagation of chaos for an interacting particle system
$\xi = (\xi^{i,N})_{i=1,\cdots, N}$ whose empirical law will be shown to converge
to the law of the solution $Y$ of 
 the McKean type equation \eqref{eq:NSDE}. 
%when the coefficients $\Phi, g, \Lambda$ are bounded and Lipschitz. 
A consequence of the so called {\it propagation of chaos}
%We recall that the propagation of chaos consists in 
 which describes the  
asymptotic independence of the components of $\xi$ when the size $N$ of the particle system 
goes to $\infty$. That property  was introduced in
\cite{McKean} and further developed and popularized by
 \cite{sznitman}. 
The convergence of $(\xi^{i,N})_{i=1,\cdots, N}$ induces a natural approximation
 of $u$, solution of~\eqref{eq:NSDE}. \\
We suppose here the validity of Assumption \ref{ass:main}.
%For the simplicity 
%of formulation we suppose that $\Phi$ and $g$  only depend on the last variable $z$.
Let $(\Omega, \mathcal F, \P)$ be a fixed probability space,
and $(W^i)_{i=1,\cdots ,N}$ be a sequence of independent $\R^p$-valued Brownian motions. Let $(Y_0^i)_{i=1,\cdots,N}$ be i.i.d. r.v. according to $\zeta_0$. We consider $\mathbf{Y}:=(Y^i)_{i=1,\cdots ,N}$ the
 sequence of processes such that  $(Y^i, u^{m^i})$ are solutions to
\begin{equation}
\label{eq:Yi}
\left \{
\begin{array}{l}
Y^i_t=Y_0^i+\int_0^t\Phi(s,Y^i_s,u^{m^i}_s(Y^i_s))dW^i_s+\int_0^tg(s,Y^i_s,u^{m^i}_s(Y^i_s))ds \\
u^{m^i}_t(y)={\displaystyle \E\left [ K(y-Y^i_t)V_t\big (Y^i,u^{m^i}(Y^i)\big ) \right ]}\ ,\quad \textrm{with}\ m^i:=\mathcal{L}(Y^i)\ ,
\end{array}
\right .
\end{equation}
%with $m^i$ standing for the law of $Y^i$ and 
recalling that $V_t\big (Y^i,u^{m^i}(Y^i)\big )=\exp \big( \int_0^t\Lambda_s(Y^i_s,u^{m^i}_s(Y^i_s))ds \big)$. 
The existence and uniqueness of the solution of each equation is ensured by Theorem~\ref{prop:NSDE}. 
We recall that  the map                            $(m,t,y) \mapsto u^m(t,y)$
 fulfills the regularity properties given at the second and third item of Proposition \ref{lem:uu'} .

Obviously the processes  $(Y^i)_{i=1,\cdots ,N}$ are independent.
They are also identically distributed since Theorem \ref{prop:NSDE} also states uniqueness
in law.
\\
So we can define
 $m^0:=m^i$ the common distribution of the processes $Y^i, i=1,\cdots ,N$, which is of course
the law of the process $Y$, such that $(Y,u)$ is a solution of \eqref{eq:NSDE}. \\
From now on, ${\mathcal C}^{dN}$  will denote $( {\mathcal C}^{d})^N$, which is obviously isomorphic to $\shc([0,T], \R^{dN})$.
%We start observing that, 
For every $\bar{\xi} \in {\mathcal C}^{dN}$ 
we will denote 
 \begin{equation}
 \label{eq:SampleXi}
 S^N(\mathbf{\bar \xi}):=\frac{1}{N} \sum_{i=1}^N \delta_{\bar \xi^{i,N}}.
 \end{equation}
The function  $(t,x) \mapsto u^{S^N(\mathbf{\bar{\xi}})}_t(x)$ is
 obtained by composition of $m \mapsto u^m_t(x)$ (defined
in  \eqref{Eum})
 with $ m = S^N(\mathbf{\bar{\xi}})$.
% with $S^N(\mathbf{\bar \xi})$ standing for the empirical measure associated to $\mathbf{\bar \xi}:=(\bar \xi^{i,N})_{i=1,\cdots ,N}$ i.e. 

Now let us introduce  the system of equations
%the weakly interacting particle system   such that 

\begin{equation}
\label{eq:XIi}
\left \{
\begin{array}{l}
\xi^{i,N}_t= \xi^{i,N}_0 +\int_0^t\Phi(s,\xi^{i,N}_s,u^{S^N(\mathbf{\xi})}_s(\xi^{i,N}_s))dW^i_s+\int_0^tg(s,\xi^{i,N}_s,u^{S^N(\mathbf{\xi})}_s(\xi^{i,N}_s))ds  \\
\xi^{i,N}_0 = Y^i_0 \\
u^{S^N(\mathbf{\xi})}_t(y)={\displaystyle \frac{1}{N}\sum_{j=1}^N  K(y-\xi^{j,N}_t) V_t\big (\xi^{j,N},u^{S^N(\mathbf{\xi})}(\xi^{j,N})\big ) }\ .
\end{array}
\right .
\end{equation}
Conformally with  \eqref{eq:SampleXi}, we consider  the empirical (random) 
measure  
 $\displaystyle{ S^N(\mathbf{Y})=\frac{1}{N} \sum_{i=1}^N \delta_{Y^{i}} }$ 
related to 
${\bf Y} := (Y^i)_{i=1,\cdots,N},$
where we recall that for each $i \in \{1,\cdots,N\}$, $Y^i$ is solution of \eqref{eq:Yi}.
 We observe that by Remark \ref{RDelta}, $S^N(\mathbf{\xi})$ and $S^N(\mathbf{Y})$ are measurable maps from $(\Omega,\shf)$ to $(\shp(\shc^d),\shb(\shp(\shc^d)))$; moreover $S^N(\xi),S^N({\bf{Y}}) \in \shp_2(\shc^d)$ $\P$-a.s.
A solution ${\bf{\xi}}:=(\xi^{i,N})_{i=1,\cdots ,N}$  of \eqref{eq:XIi} is called
{\bf{interacting particle system}}.

The first line of \eqref{eq:XIi} is in fact a path-dependent stochastic differential equation.
We claim that its coefficients 
%of the first line equation of \eqref{eq:XIi}
 are measurable. Indeed, the map $(t,\bar{\xi}) \mapsto (S^{N}(\bar{\xi}),t,\bar{\xi}_t^i,)$ being continuous from $([0,T] \times \shc^{dN},\shb([0,T]) \otimes \shb(\shc^{dN}))$ to $( \shp(\shc^{d}) \times [0,T] \times \R^d , \shb(\shp(\shc^d)) \otimes \shb([0,T]) \otimes \shb(\R^d)) $ for all $i \in \{1,\cdots,N\}$, by composition with the continuous map $(m,t,y) \mapsto u^m(t,y)$ (see Proposition \ref{lem:uu'} 3.) we deduce the continuity of $(t,\bar{\xi}) \mapsto (u_t^{S^N(\bar{\xi})}(\bar{\xi}^i_t))_{i=1,\cdots,N}$, and so the measurability from $([0,T] \times \shc^{dN},\shb([0,T]) \otimes \shb(\shc^{dN}))$ to $(\R,\shb(\R))$.
\\
In the sequel, for simplicity we set $\bar{\xi}_{r \le s}:= (\bar{\xi}^i_{r \le s})_{1 \le i \le N}$.  
We remark that,  by Proposition \ref{non-anticip} and Remark \ref{R38}, we have
\begin{eqnarray}
\label{NonAntip}
 \left (u^{S^N(\bar{\xi})}_s(\bar{\xi}^{i}_s)\right )_{i=1,\cdots N} = \left (u^{S^N(\mathbf{\bar{\xi}_{r \le s} })}_s(\bar{\xi}^{i}_s)\right )_{i=1,\cdots N},
\end{eqnarray}
for any $s \in [0,T], \; \bar{\xi} \in {\mathcal C}^{dN}$ and so stochastic integrands of \eqref{eq:XIi} 
are adapted (so progressively measurable being continuous in time) and so the corresponding
 It\^o integral makes sense. We discuss below the well-posedness of
\eqref{eq:XIi}. \\
The fact that \eqref{eq:XIi} has a unique (strong) solution $(\xi^{i,N})_{i=1,\cdots N}$ holds true because of the following arguments.
\begin{enumerate}
%Indeed the diffusion and drift coefficients in the first equation of~\eqref{eq:XIi} satisfy the standard conditions since 
 \item  $\Phi$ and $g$ are Lipschitz. Moreover 
 the map $\mathbf{\bar{\xi}}_{r\leq s}
\mapsto \left (u^{S^N(\mathbf{\bar{\xi}_{r \le s} })}_s(\bar{\xi}^{i}_s) \right)_{i=1,\cdots,N}$
is Lipschitz. \\
Indeed, for given $(\xi_{r \leq s},\eta_{r \leq s}) \in \shc^{dN} \times \shc^{dN} $, $s \in [0,T]$, by using successively inequality \eqref{eq:uu'} of Proposition \ref{lem:uu'} and Remark \ref{RADelta}, for all $i \in \{1,\cdots,N\}$ we have 
\begin{eqnarray}
\label{Lip_u_PD}
\vert u_s^{S^N(\xi_{r \leq s})}(\xi^i_{t}) - u_s^{S^N(\eta_{r \leq s})}(\eta^i_{t})  \vert & \leq & \sqrt{C_{K,\Lambda}(T)} \left(  \vert \xi^i_s - \eta_s^i \vert + \frac{1}{N} \sum_{j=1}^N \sup_{0 \leq r \leq s}\vert \xi^j_r- \eta^j_r \vert \right) \nonumber \\
& \leq & 2\sqrt{C_{K,\Lambda}(T)} \max_{j=1,\cdots,N} \sup_{0 \leq r \leq s}\vert \xi^j_r-\eta^j_r \vert \ .
\end{eqnarray}
Finally  the functions 
\begin{eqnarray*}
 {\mathbf{\bar{\xi}}}_{r\leq s}  &\mapsto& 
\left (\Phi(s, \bar \xi^i_s,u_s^{S^N(\mathbf{\bar{\xi}_r, r \le s})}
(\bar{\xi}^{i}_s))\right )_{i=1,\cdots N} \\
 \mathbf{\bar{\xi}}_{r\leq s}  &\mapsto& \left (g(s, \bar \xi^i_s, u_s^{S^N
(\mathbf{\bar{\xi}_r, r \le s})}
(\bar{\xi}^{i}_s))\right )_{i=1,\cdots N}
\end{eqnarray*}
are uniformly Lipschitz and bounded.
\item 
A classical argument of well-posedness for systems
of path-dependent stochastic differential equations
with Lipschitz dependence on the sup-norm of the path, see Chapter~V, Section~2.11, Theorem~11.2 page 128 in~\cite{rogers_v2}.
\end{enumerate}
After the preceding introductory considerations, we can state and prove the main theorem of the section.
\begin{thm} \label{TPC}
Let us suppose the validity of Assumption \ref{ass:main}. Let $N$ be a fixed positive integer.
Let $(Y^i)_{i=1, \cdots,N}$ (resp. ($(\xi^{i,N})_{i=1, \cdots,N}$) be the solution of \eqref{eq:Yi} (resp. \eqref{eq:XIi}), 
let $m^0$ as defined after \eqref{eq:Yi}. The following assertions hold.
\begin{enumerate}
\item If $\shf(K) \in L^1(\R^d)$, there is a positive constant $C$  only depending on 
$L_\Phi, L_g,M_K,M_{\Lambda},L_K,L_{\Lambda},T, \Vert \shf(K) \Vert_1  $,
% $C= C(\Phi, g, \Lambda, K, T)$ 
such that, for all $i=1,\cdots,N$ and $t \in [0,T]$,  
\begin{eqnarray}
\label{eq:xiYuFinalb1}
 \E[ \Vert u_t^{S^N(\xi)}-u^{m^0}_t\Vert_{\infty} ^2] 
 & \leq & \frac{C}{N} \\
  \label{eq:xiYuFinalb2}
\E[\sup_{0 \leq s\leq t} \vert \xi^{i,N}_s-Y^i_s\vert ^2] & \leq & \frac{C}{N}\ .
\end{eqnarray}

\item  If $K$ belongs to $W^{1,2}(\R^d)$, there is a positive constant
  $C$  only depending on $L_\Phi, L_g, M_K,M_{\Lambda},L_K,L_{\Lambda},T$ and $\Vert \nabla K\Vert_2$,
% $C= C(\Phi, g, \Lambda, K, T)$
 such that, for all $t \in [0,T]$,
 \begin{eqnarray}
 \label{eq:xiYuFinal3}
 \E[ \Vert u_t^{S^N(\xi)}-u^{m^0}_t\Vert_{2} ^2] & \leq & \frac{C}{N} \ .
 \end{eqnarray}
\end{enumerate} 
\end{thm}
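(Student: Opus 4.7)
The strategy is to introduce the empirical measure of the i.i.d.\ copies $\mathbf{Y}=(Y^i)$ as an intermediate object and split via the triangle inequality
$$
\E[\|u_t^{S^N(\xi)}-u_t^{m^0}\|_*^2]\leq 2\E[\|u_t^{S^N(\xi)}-u_t^{S^N(\mathbf Y)}\|_*^2]+2\E[\|u_t^{S^N(\mathbf Y)}-u_t^{m^0}\|_*^2],
$$
where $\|\cdot\|_*$ is the $\|\cdot\|_\infty$ norm for item 1 and the $\|\cdot\|_2$ norm for item 2. The first summand is a \emph{particle deviation} to be handled jointly with the pathwise errors $\sup_{s\leq t}|\xi^{j,N}_s-Y^j_s|^2$; the second is a \emph{Monte-Carlo error} that will be controlled using independence of the $Y^i$'s.

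To bound the particle deviation, I would apply Proposition~\ref{lem:uu'}(1) with $y=y'$ (for item 1), respectively Proposition~\ref{lem:uu'}(4) (for item 2), and combine them with Remark~\ref{RADelta}, obtaining
$$
\|u^{S^N(\xi)}_t-u^{S^N(\mathbf Y)}_t\|_*^2\leq C_T\,\tfrac{1}{N}\sum_{j=1}^N \sup_{0\leq s\leq t}|\xi^{j,N}_s-Y^j_s|^2.
$$
The pathwise errors are in turn coupled to the $u$-error by Lemma~\ref{lem:yy'}(1) applied with $r(s)=s$, $\shu=u^{S^N(\xi)}$ and $\shu'=u^{m^0}$: writing
$$
|u^{S^N(\xi)}_s(\xi^{j,N}_s)-u^{m^0}_s(Y^j_s)|\leq \|u^{S^N(\xi)}_s-u^{m^0}_s\|_\infty+|u^{m^0}_s(\xi^{j,N}_s)-u^{m^0}_s(Y^j_s)|,
$$
and bounding the second term by $\sqrt{C_{K,\Lambda}(T)}\,|\xi^{j,N}_s-Y^j_s|$ via Proposition~\ref{lem:uu'}(1) with $m=m'$, a standard Gronwall estimate gives $\tfrac{1}{N}\sum_j\E[\sup_{s\leq t}|\xi^{j,N}_s-Y^j_s|^2]\leq C\int_0^t\E[\|u^{S^N(\xi)}_s-u^{m^0}_s\|_\infty^2]\,ds$, and the same inequality holds for a single index $i$ by symmetry.

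For the Monte-Carlo term in item 1, I would invoke Proposition~\ref{lem:uu'}(5) with $\eta=S^N(\mathbf Y)$: since the $Y^i$'s are i.i.d., for every $\varphi\in\shc_b(\shc^d)$ with $\|\varphi\|_\infty\leq 1$,
$$
\E[|\langle S^N(\mathbf Y)-m^0,\varphi\rangle|^2]=\tfrac{1}{N}\mathrm{Var}(\varphi(Y^1))\leq\tfrac{1}{N},
$$
whence $\E[\|u^{S^N(\mathbf Y)}_t-u^{m^0}_t\|_\infty^2]\leq\bar{C}_{K,\Lambda}(T)/N$. For item 2 the Wasserstein stability \eqref{uu'L2} cannot be used directly on $S^N(\mathbf Y)-m^0$ (the empirical Wasserstein rate degrades with dimension); instead I would carry out the variance computation in $L^2$ by hand, decomposing $u_t^{S^N(\mathbf Y)}(y)-u_t^{m^0}(y)$ into the sum of centred i.i.d.\ random variables $\tfrac1N\sum_i[K(y-Y^i_t)V^{m^0}_t(Y^i)-\E[\cdots]]$ plus a remainder involving $V^{S^N(\mathbf Y)}-V^{m^0}$, and using $\int K(y-\omega_t)^2dy\leq \|K\|_2^2$ together with the Lipschitz property of $V$ coming from Remark~\ref{rem:V}; one obtains $\E[\|u^{S^N(\mathbf Y)}_t-u^{m^0}_t\|_2^2]\leq C/N+C\int_0^t\E[\|u^{S^N(\mathbf Y)}_s-u^{m^0}_s\|_2^2]\,ds$, which closes by Gronwall.

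Putting the two ingredients together yields a closed inequality $h(t):=\E[\|u^{S^N(\xi)}_t-u^{m^0}_t\|_*^2]\leq C/N+C\int_0^t h(s)\,ds$, so Gronwall's lemma delivers \eqref{eq:xiYuFinalb1} and \eqref{eq:xiYuFinal3}; substituting this bound back into the pathwise estimate of the second paragraph and applying Gronwall once more produces \eqref{eq:xiYuFinalb2}. The main obstacle is the $L^2$ Monte-Carlo bound: one must avoid the $W_t$-based stability (which would only give rate $N^{-2/d}$) and instead exploit the $W^{1,2}$-regularity of $K$ through a direct variance/Fubini computation, while simultaneously absorbing the implicit dependence of $V^{S^N(\mathbf Y)}$ on the error itself by a Gronwall argument.
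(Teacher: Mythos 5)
Your treatment of item 1, and of the first half of item 2, is essentially the paper's own argument: the paper routes the uniform-norm estimates through Proposition \ref{PMoreGeneral} (stated for possibly correlated Brownian motions), but the content is identical to what you propose — the triangle inequality through $S^N(\mathbf{Y})$, the stability estimate \eqref{eq:uu'} combined with Remark \ref{RADelta}, the pathwise coupling via Lemma \ref{lem:yy'}, the i.i.d.\ variance bound \eqref{majorNormFaible} fed into item 5 of Proposition \ref{lem:uu'}, and a Gronwall closure; likewise your bound of $\E[\Vert u_t^{S^N(\xi)}-u_t^{S^N(\mathbf{Y})}\Vert_2^2]$ by \eqref{uu'L2} and \eqref{eq:xiYuFinalb2} is exactly \eqref{SNxiSNYL2}, and your centred-sum/remainder decomposition of $u^{S^N(\mathbf{Y})}-u^{m^0}$ is the paper's $A_t$/$B_t$ split (for the centred part, $K^2\le M_K K$ with $\Vert K\Vert_1=1$ or $\Vert K\Vert_2^2$ both work).

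The genuine gap is in your closure of the remainder term by a Gronwall inequality in the $L^2$ norm. By \eqref{eq:Vmajor2}, the bracket $V_t\big(Y^j,u^{S^N(\mathbf{Y})}(Y^j)\big)-V_t\big(Y^j,u^{m^0}(Y^j)\big)$ is controlled by $\int_0^t \vert u_r^{S^N(\mathbf{Y})}(Y^j_r)-u_r^{m^0}(Y^j_r)\vert\,dr$, i.e.\ by the error \emph{evaluated along the random trajectory} $Y^j$. After integrating in $x$ and taking expectations you are left with terms $\E\big[\int_0^t \vert u_r^{S^N(\mathbf{Y})}(Y^j_r)-u_r^{m^0}(Y^j_r)\vert^2\,dr\big]$, and these are not dominated by $\int_0^t \E[\Vert u_r^{S^N(\mathbf{Y})}-u_r^{m^0}\Vert_2^2]\,dr$: evaluation at a random point (which is moreover correlated with $u^{S^N(\mathbf{Y})}$ itself) cannot be compared with the $L^2(\R^d,dx)$ norm unless the marginal laws of $Y_r$ have uniformly bounded densities, which is not assumed. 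Hence the inequality $\E[\Vert u^{S^N(\mathbf{Y})}_t-u^{m^0}_t\Vert_2^2]\le C/N+C\int_0^t\E[\Vert u^{S^N(\mathbf{Y})}_s-u^{m^0}_s\Vert_2^2]\,ds$ that you want to feed into Gronwall is not established. The paper avoids this entirely: in \eqref{majorB}--\eqref{majorBFinal} the pointwise evaluation is bounded by $\Vert u_r^{S^N(\mathbf{Y})}-u_r^{m^0}\Vert_\infty^2$, and then \eqref{eq:uu'Linf} together with \eqref{majorNormFaible} gives $\E[\Vert u_r^{S^N(\mathbf{Y})}-u_r^{m^0}\Vert_\infty^2]\le C/N$ directly, with no Gronwall at this stage — the already-available uniform-norm Monte-Carlo estimate is simply re-injected into the $L^2$ bound. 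Replacing your $L^2$ Gronwall step by this sup-norm channel repairs the argument (at the price, as in the paper's own proof, of invoking the $\shf(K)\in L^1$ machinery of item 5 of Proposition \ref{lem:uu'} inside the proof of item 2).
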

Before proving Theorem \ref{TPC}, we remark that the propagation of chaos follows easily.
\begin{corro}
\label{CCP}
Under Assumption \ref{ass:main}, the propagation of chaos holds for the interacting particle system $(\xi^{i,N})_{i \in \N}$.
\end{corro}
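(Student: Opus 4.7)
The plan is to derive the corollary as a direct consequence of the $L^2$ pathwise estimate \eqref{eq:xiYuFinalb2} in Theorem \ref{TPC}, which couples each interacting particle $\xi^{i,N}$ to the i.i.d. copy $Y^i$ driven by the same Brownian motion $W^i$ and same initial condition $Y_0^i$. Recall Sznitman's characterization of propagation of chaos: an exchangeable sequence of laws $Q^N$ on $(\shc^d)^N$ is $m^0$-chaotic if and only if for each fixed $k \in \N$, the law of the first $k$ marginals under $Q^N$ converges weakly to $(m^0)^{\otimes k}$ as $N \to \infty$. The exchangeability of the law of $(\xi^{1,N},\ldots,\xi^{N,N})$ follows from the symmetric structure of the system \eqref{eq:XIi} together with the i.i.d.\ character of the data $(Y_0^i, W^i)_{i=1,\ldots,N}$, so only the convergence of finite-dimensional marginals remains to be established.

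First I would fix $k \in \N$ and bounded continuous functions $\varphi_1, \ldots, \varphi_k \in \shc_b(\shc^d)$. By \eqref{eq:xiYuFinalb2} each $\xi^{i,N}$ converges to $Y^i$ in $L^2(\Omega, \shc^d)$, and hence in probability with respect to the supremum norm, so in particular $\varphi_i(\xi^{i,N}) \to \varphi_i(Y^i)$ in probability. Since the $\varphi_i$ are uniformly bounded, dominated convergence yields
\begin{equation*}
\E\Bigl[\prod_{i=1}^k \varphi_i(\xi^{i,N})\Bigr] - \E\Bigl[\prod_{i=1}^k \varphi_i(Y^i)\Bigr] \longrightarrow 0 \quad \text{as } N \to \infty.
\end{equation*}

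Next, recalling that the processes $(Y^i)_{i=1,\ldots,N}$ defined by \eqref{eq:Yi} are i.i.d.\ with common law $m^0$ (as noted after \eqref{eq:Yi}, thanks to the uniqueness in law granted by Theorem \ref{prop:NSDE}), the right-hand expectation factorizes:
\begin{equation*}
\E\Bigl[\prod_{i=1}^k \varphi_i(Y^i)\Bigr] = \prod_{i=1}^k \langle m^0, \varphi_i \rangle.
\end{equation*}
Combining the two displays shows that the law of $(\xi^{1,N}, \ldots, \xi^{k,N})$ on $(\shc^d)^k$ converges weakly to $(m^0)^{\otimes k}$, which is the definition of $m^0$-chaoticity. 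The main (and only real) obstacle is making sure that the coupling asserted by \eqref{eq:xiYuFinalb2} is precisely the pathwise $L^2$ coupling that drives this argument, and to justify that the joint system is exchangeable — both of which are built into the construction of \eqref{eq:Yi}–\eqref{eq:XIi} through the shared noise $(W^i, Y_0^i)$. Beyond this, the argument reduces to a standard weak-convergence bookkeeping and requires no further quantitative input.
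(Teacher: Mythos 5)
Your proposal is correct and follows essentially the same route as the paper: both rest entirely on the coupling estimate \eqref{eq:xiYuFinalb2}, which forces $(\xi^{1,N},\ldots,\xi^{k,N})$ to converge in law to $(Y^1,\ldots,Y^k)$, and then use that the $Y^i$ are i.i.d.\ with law $m^0$ to identify the limit as $(m^0)^{\otimes k}$. The test-function/dominated-convergence bookkeeping and the remark on exchangeability are just a more explicit rendering of the paper's two-line argument and add nothing that changes the approach.
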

\begin{proof}
We prove here that Theorem \ref{TPC} implies the propagation of chaos. \\ 
Indeed, for all $k \in \N^{\star}$, \eqref{eq:xiYuFinalb2} implies 
$$
(\xi^{1,N}-Y^1,\xi^{2,N}-Y^2,\cdots,\xi^{k,N}-Y^k) \xrightarrow[\text{$N \longrightarrow + \infty$}]{\text{$L^{2}(\Omega,\shf,\P)$}} 0 \ ,
$$
which implies in particular the convergence in law of the vector $(\xi^{1,N},\xi^{2,N},\cdots,\xi^{k,N})$ to $(Y^1,Y^2,\cdots,Y^k)$. Consequently,
 since $({Y}^{i})_{i=1,\cdots,k}$ are i.i.d. according to $m^0$
\begin{eqnarray}
\label{eq:CVChaos}
(\xi^{1,N},\xi^{2,N},\cdots,\xi^{k,N}) \; \textrm{ converges in law to } (m^0)^{\otimes k} \; \textrm{ when } N \rightarrow +\infty \ .
\end{eqnarray}
\end{proof}

The validity of \eqref{eq:xiYuFinalb1} and \eqref{eq:xiYuFinalb2}
 will be the consequence of the significant more general proposition below.
\begin{prop}\label{PMoreGeneral}
Let us suppose the validity of Assumption \ref{ass:main}. Let $N$ be a fixed positive integer. Let $(W^{i,N})_{i=1,\cdots,N}$ be a family of $p$-dimensional standard Brownian motions (not necessarily independent).
Let  $({Y}^i_0)_{i=1,\cdots,N}$ be the family of i.i.d. r.v. initializing the system~\eqref{eq:Yi}.
We consider the processes $(\bar{Y}^{i,N})_{i=1,\cdots,N}$, such that 
for each $i \in \{1,\cdots,N\}$, $\bar Y^{i,N}$ is the unique strong solution of
\begin{equation}
\label{eq:Yi-general}
\left \{
\begin{array}{l}
\bar{Y}^{i,N}_t=Y_0^i+\int_0^t\Phi(s,\bar{Y}^{i,N}_s,u^{m^{i,N}}_s(\bar{Y}^{i,N}_s))dW^{i,N}_s+\int_0^tg(s,\bar{Y}^{i,N}_s,u^{m^{i,N}}_s(\bar{Y}^{i,N}_s))ds, \quad \textrm{ for all } t \in [0,T] \\
u^{m^{i,N}}_t(y)={\displaystyle \E\left [ K(y-\bar{Y}^{i,N}_t)V_t\big (\bar{Y}^{i,N},u^{m^{i,N}}(\bar{Y}^{i,N})\big ) \right ]}\ ,\quad \textrm{with}\ m^{i,N}:=\mathcal{L}(\bar{Y}^{i,N})\ ,
\end{array}
\right .
\end{equation}
recalling that $V_t\big (Y^{i,N},u^{m^{i,N}}(Y^{i,N})\big )=
\exp \big( \int_0^t\Lambda(s,Y^{i,N}_s,u^{m^{i,N}}_s(Y^{i,N}_s))ds \big)$. \\
Let us consider now the system of equations
% obtained by replacing $W^i$ by
% $W^{i,N}$ in~
\eqref{eq:XIi},
where the processes  $W^i$ are replaced  by
 $W^{i,N}$, i.e.
\begin{equation}
\label{eq:XIiN}
\left \{
\begin{array}{l}
\xi^{i,N}_t= \xi^{i,N}_0 +\int_0^t\Phi(s,\xi^{i,N}_s,u^{S^N(\mathbf{\xi})}_s(\xi^{i,N}_s))dW^{i,N}_s+\int_0^tg(s,\xi^{i,N}_s,u^{S^N(\mathbf{\xi})}_s(\xi^{i,N}_s))ds  \\
\xi^{i,N}_0 = Y^{i}_0 \\
u^{S^N(\mathbf{\xi})}_t(y)={\displaystyle \frac{1}{N}\sum_{j=1}^N  K(y-\xi^{j,N}_t) V_t\big (\xi^{j,N},u^{S^N(\mathbf{\xi})}(\xi^{j,N})\big ) }\ .
\end{array}
\right .
\end{equation}
Then the following assertions hold.
\begin{enumerate}
\item For any $i=1,\cdots N$, $(\bar{Y}^{i,N}_t)_{t\in [0,T]}$ have the same law $m^{i,N}=m^0$, where $m^0$ is the common law of processes $(Y^i)_{i=1,\cdots,N}$ defined by the system \eqref{eq:Yi}.
% the underlying law of $(Y^i)$ solution of~\eqref{eq:Yi}.
\item Equation~\eqref{eq:XIiN} admits a unique strong solution.
	\item Suppose moreover that $\shf(K)$ is in $L^1(\R^d)$. Then  there is a positive constant $C$  only depending on 
$L_\Phi, L_g,M_K,M_{\Lambda},L_K,L_{\Lambda},T$ and $ \Vert \shf(K) \Vert_1 $
%there is a constant $C= C(K,\Phi, g, \Lambda, T)$ 
such that, for all $t \in [0,T]$,
\begin{equation}
\label{eq:xiYuFinalGeneral}
\sup_{i=1,\dots,N} \E[\sup_{0 \leq s\leq t} \vert \xi^{i,N}_s-\bar{Y}^{i,N}_s\vert ^2] + \E[ \Vert u_t^{S^N(\xi)}-u^{m^0}_t\Vert_{\infty} ^2] 
  \leq  C\sup_{ \underset{\Vert \varphi \Vert_{\infty} \le 1}{\varphi \in \shc_b(\shc^d)}} \E[ \vert \langle S^N(\mathbf{\bar{Y}})  - m^0 , \varphi \rangle \vert^2 ], 
\end{equation}
with again $\displaystyle{S^{N}({\bf{\bar{Y}}}) := \frac{1}{N} \sum_{j=1}^N \delta_{\bar{Y}^{j,N}}}$. 
\end{enumerate}
\end{prop}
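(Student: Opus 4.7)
Parts 1 and 2 are immediate from the structure of the problem. For part 1, each equation in \eqref{eq:Yi-general} is the McKean SDE \eqref{eq:NSDE} driven by $W^{i,N}$ instead of an independent $W^i$; Theorem \ref{prop:NSDE} provides pathwise uniqueness and hence uniqueness in law, so $\mathcal{L}(\bar{Y}^{i,N})=m^0$ and $u^{m^{i,N}}=u^{m^0}$ for every $i$. For part 2, the reasoning preceding Theorem \ref{TPC} carries over verbatim: by Assumption \ref{ass:main}.1 together with \eqref{Lip_u_PD}, the path coefficients of \eqref{eq:XIiN} are Lipschitz uniformly in $\mathbf{\bar{\xi}}$, and independence of the driving noises was nowhere used, so the classical well-posedness for path-dependent Lipschitz SDEs yields a unique strong solution.

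For part 3, the plan is to set up a coupled Gronwall scheme for the two quantities $E(t):=\sup_{1\le i\le N}\E[\sup_{s\le t}|\xi^{i,N}_s-\bar{Y}^{i,N}_s|^2]$ and $D(t):=\E[\|u^{S^N(\xi)}_t-u^{m^0}_t\|_\infty^2]$, closing on the residual $\epsilon_N:=\sup_{\varphi\in\shc_b(\shc^d),\,\|\varphi\|_\infty\le 1}\E[|\langle S^N(\mathbf{\bar{Y}})-m^0,\varphi\rangle|^2]$. First I would apply Burkholder--Davis--Gundy to $\xi^{i,N}-\bar{Y}^{i,N}$, invoke the Lipschitz assumptions on $\Phi$ and $g$, and decompose $|u^{S^N(\xi)}_s(\xi^{i,N}_s)-u^{m^0}_s(\bar{Y}^{i,N}_s)|\le \|u^{S^N(\xi)}_s-u^{m^0}_s\|_\infty+|u^{m^0}_s(\xi^{i,N}_s)-u^{m^0}_s(\bar{Y}^{i,N}_s)|$, bounding the second piece via Proposition \ref{lem:uu'}(1) with $m=m'=m^0$; a first Gronwall then yields $E(t)\le C\int_0^t D(s)\,ds$.

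The core of the argument is the bound on $D(t)$. I would triangulate $\|u^{S^N(\xi)}_t-u^{m^0}_t\|_\infty\le \|u^{S^N(\xi)}_t-u^{S^N(\mathbf{\bar{Y}})}_t\|_\infty+\|u^{S^N(\mathbf{\bar{Y}})}_t-u^{m^0}_t\|_\infty$; Proposition \ref{lem:uu'}(5) applied with the random probability $\eta=S^N(\mathbf{\bar{Y}})$ gives $\E[\|u^{S^N(\mathbf{\bar{Y}})}_t-u^{m^0}_t\|_\infty^2]\le \bar{C}_{K,\Lambda}(T)\,\epsilon_N$. For the remaining piece I would subtract the defining fixed-point identities \eqref{Eum} for $u^{S^N(\xi)}$ and $u^{S^N(\mathbf{\bar{Y}})}$ and use the Lipschitz/boundedness of $K$, the Lipschitz estimate \eqref{eq:Vmajor2} for $V$, and the spatial Lipschitz continuity \eqref{eq:uu'} applied to $u^{S^N(\mathbf{\bar{Y}})}$. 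This produces an integral inequality of the form $\|u^{S^N(\xi)}_t-u^{S^N(\mathbf{\bar{Y}})}_t\|_\infty\le \frac{C_1}{N}\sum_{j=1}^N\sup_{s\le t}|\xi^{j,N}_s-\bar{Y}^{j,N}_s|+C_2\int_0^t\|u^{S^N(\xi)}_s-u^{S^N(\mathbf{\bar{Y}})}_s\|_\infty\,ds$, from which Gronwall in $t$ followed by Cauchy--Schwarz yields $\E[\|u^{S^N(\xi)}_t-u^{S^N(\mathbf{\bar{Y}})}_t\|_\infty^2]\le C\,E(t)$. Combining the pieces gives $D(t)\le C(E(t)+\epsilon_N)$, and inserting $E(t)\le C\int_0^t D(s)\,ds$ and applying Gronwall one last time produces $E(t)+D(t)\le C\epsilon_N$, which is \eqref{eq:xiYuFinalGeneral}.

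The main obstacle I anticipate is exactly the estimate on $\|u^{S^N(\xi)}_t-u^{S^N(\mathbf{\bar{Y}})}_t\|_\infty$. One cannot invoke \eqref{eq:uu'Linf} directly to compare two coupled random empirical measures, and the particles $(\xi^{i,N})_i$ are not assumed exchangeable since the $W^{i,N}$ are arbitrary. One must therefore unfold the implicit definition of $u^m$, treat each pair $(\xi^{j,N},\bar{Y}^{j,N})$ separately inside the empirical average, and close by an in-time Gronwall; this is precisely the non-exchangeable flexibility of \eqref{eq:xiYuFinalGeneral} highlighted in Remark \ref{RMoreGeneral}.
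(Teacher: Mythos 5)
Your proposal is correct, and its overall architecture (the coupled Gronwall scheme for $E(t)$ and $D(t)$, the triangulation through $u^{S^N(\mathbf{\bar Y})}_t$, Lemma \ref{lem:yy'}/BDG for the particle increment, and Proposition \ref{lem:uu'} item 5 with $\eta=S^N(\mathbf{\bar Y})$, $m=m^0$ for the law-of-large-numbers term) coincides with the paper's proof. The one place where you genuinely diverge is the term $\Vert u^{S^N(\xi)}_t-u^{S^N(\mathbf{\bar Y})}_t\Vert_{\infty}$: you treat this as the main obstacle and re-derive the estimate by unfolding the fixed-point identity \eqref{Eum} for both empirical measures, using \eqref{eq:Vmajor2} and an extra in-time Gronwall, which in effect reproves the stability estimate of Proposition \ref{lem:uu'} item 1 in this special case. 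The paper instead observes that \eqref{eq:uu'} is a deterministic inequality valid for every pair of measures in $\mathcal{P}_2(\mathcal{C}^d)$, so it can be applied realization-wise with $m=S^N(\xi)(\bar\omega)$ and $m'=S^N(\mathbf{\bar Y})(\bar\omega)$ (both empirical measures lie in $\mathcal{P}_2(\shc^d)$ a.s.), and then Remark \ref{RADelta} bounds $\vert W_t(S^N(\xi),S^N(\mathbf{\bar Y}))\vert^2$ by $\frac{1}{N}\sum_j\sup_{s\le t}\vert\xi^{j,N}_s-\bar Y^{j,N}_s\vert^2$, giving $\E[\Vert u^{S^N(\xi)}_t-u^{S^N(\mathbf{\bar Y})}_t\Vert_\infty^2]\le C\,E(t)$ in one line, with no additional Gronwall. (Your caveat that \eqref{eq:uu'Linf} cannot compare two coupled random empirical measures is accurate, but \eqref{eq:uu'} can, and your own argument still needs \eqref{eq:uu'} with $m=m'=S^N(\mathbf{\bar Y})$ for the spatial Lipschitz bound, so nothing is saved by the longer route.) Your handling of items 1 and 2 matches the paper, which only adds the remark that the correlated Brownian motions are accommodated by the multidimensional BDG inequality in the well-posedness argument.
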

\begin{rem} \label{RMoreGeneral}
The convergence of the numerical approximation $u_t^{S^N(\xi)}$ to $u_t^{m^0}$ only requires the convergence of
 $ d_2^\Omega(S^N(\mathbf{\bar Y}), m^0)$ to $0$, where  the distance $ d_2^\Omega$ has been defined at Remark \ref{R26}.
This holds if, for each $N$, $\bar Y^{i,N}, i=1,\cdots N$ are independent; however, this is only a sufficient condition.\\  
This gives the opportunity to define new numerical schemes for which the convergence of the empirical measure $ S^N(\mathbf{\bar{Y}})$ is verified without i.i.d. particles. Let us consider $(\bar{Y}^{i,N})_{i=1,\cdots N}$ (resp. $(\xi^{i,N})_{i=1,\cdots N}$) solutions of~\eqref{eq:Yi-general} (resp.~\eqref{eq:XIiN}). Observe that for any real valued test function in $\mathcal{C}_b(\mathcal{C}^d)$
 $$
 \E[\langle S^N(\mathbf{\bar{Y}})-m^0 ,\varphi \rangle ^2]=\frac{\sigma^2_{\varphi}}{N}(1+\frac{2}{N} \sum_{i<j} \rho^{i,j}_{\varphi})\ ,
 $$
 where $\sigma_{\varphi}:=\sqrt{Var ( \varphi(\bar Y^{1,N}))}$ and $\rho^{i,j}_{\varphi}:= \frac{\E[\varphi(Y^{i,N})\varphi(Y^{j,N})]-\E[\varphi(Y^{i,N})]\E[\varphi(Y^{j,N})]}{\sigma^2_{\varphi}}$. \\
 In the specific case where $(W^{i,N})_{i=1,\cdots N}$ are independent Brownian motions then  $\rho^{i,j}_{\varphi}=0$ for any bounded $\varphi \in \mathcal{C}_b(\mathcal{C}^d)$ and 
 \begin{eqnarray}
 \label{majorNormFaible}
 \sup_{ \underset{\Vert \varphi \Vert_{\infty} \le 1}{\varphi \in \shc_b(\shc^d)}} \E[ \vert \langle S^N(\mathbf{\bar{Y}})  - m^0 , \varphi \rangle \vert^2 ] \leq \frac{1}{N} \ .
 \end{eqnarray}
 %and we derive the convergence of $u^{S^N(\xi)}_t$ to $u^{m^0}_t$ at a rate $1/\sqrt{N}$.
 With our error bound one can naturally  investigate antithetic variables approaches to improve the interacting particle system convergence. Let us consider $N=2N'$ and take
 $(W^{i,N})_{i=1,\cdots N'}$ as $N'$ iid Brownian motions, then for the rest of the particles, for any $j=N'+1,N'+2, \cdots 2N'$, set $W^{j,N}=-W^{j-N',N}$.
 In this situation, we obtain
 $$
 \E[\langle S^N(\mathbf{\bar{Y}})-m^0 ,\varphi \rangle ^2]=\frac{\sigma^2_{\varphi}}{N}(1+\rho^{1,1+N'}_{\varphi}).
 $$
 So, even in this case, the rate of convergence of $u^{S^N(\xi)}_t$ to $u^{m^0}_t$ 
is still of order
 $1/\sqrt{N}$.  \\ If moreover one has
 $\displaystyle{ \sup_{ \underset{\Vert \varphi \Vert_{\infty} \le 1}{\varphi \in \shc_b(\shc^d)}} \rho^{1,1+N'}_{\varphi} \leq 0 }$,
 the variance will also be reduced with respect to the case of independent
Brownian motions, see  \eqref{majorNormFaible}.

\end{rem}
\begin{proof}[Proof of Proposition \ref{PMoreGeneral}]  Let us fix $t \in [0,T]$. 
In this proof, $C:= C(\Phi, g, \Lambda, K, T)$
 is a real positive constant, 
 which may change from line to line. \\
  Equation~\eqref{eq:Yi-general} has $N$ blocks, numbered by $1 \le i \le N$.
Theorem~\ref{prop:NSDE} gives 
uniqueness in law for each block equation,
  which implies that for any $i=1,\cdots N$, $m^{i,N}=m^0$ and proves the
 first item. \\
 Concerning  item 2., i.e. the strong existence and pathwise uniqueness of \eqref{eq:XIiN}, 
the same argument as for the well-statement of \eqref{eq:XIi} operates. 
 The only difference consists 
in the fact that the Brownian motions may be correlated.
A very close proof to the one of
 Theorem 11.2 page 128 in
 \cite{rogers_v2} works: the main argument is 
  the multidimensional BDG inequality, see e.g. Problem 3.29 of  
\cite{karatshreve}. \\
%We prove first the inequalities \eqref{eq:xiYuFinalb1} and \eqref{eq:xiYuFinalb2}.
We discuss now item 3. proving inequality \eqref{eq:xiYuFinalGeneral}. 
On the one hand, since the map $(t,\bar{\xi}) \in [0,T] \times \shc^{dN} \mapsto~(u_t^{S^N(\bar{\xi})}(\bar{\xi}^i_t))_{i=1,\cdots,N}$ is measurable and satisfies the non-anticipative property \eqref{NonAntip}, the first assertion of Lemma \ref{lem:yy'} gives for all $i \in \{1,\cdots,N\}$
\begin{eqnarray}
\label{PropaChaos}
%\label{PC78}
\E[\sup_{0 \leq s\leq t} \vert \xi^{i,N}_s-\bar{Y}^{i,N}_s\vert ^2]
& \leq & C
\E[\int_0^t
\vert u_s^{S^N(\mathbf{\xi})}(\xi^{i,N}_s)-u^{m^0}_s(\bar{Y}^{i,N}_s)\vert ^2
ds ] \nonumber \\
& \leq & C
\int_0^t \E[
\vert u_s^{S^N(\mathbf{\xi})}(\xi^{i,N}_s)-u^{m^0}_s(\xi^{i,N}_s)\vert ^2]
ds + \int_0^t
\E[ \vert u^{m^0}_s(\xi^{i,N}_s)-u^{m^0}_s(\bar{Y}^{i,N}_s)\vert ^2]
ds \nonumber \\
& \leq & C \int_0^t \left( \E[ \Vert u_s^{S^N(\mathbf{\xi})}-u^{m^0}_s \Vert_{\infty}^2 ] + 
\E[ \sup_{0 \leq r \leq s} \vert \xi^{i,N}_r - \bar{Y}^{i,N}_r \vert^2 ] \right) ds,\quad \textrm{ by \eqref{eq:uu'} ,} \nonumber \\
\end{eqnarray}
which implies
\begin{eqnarray}
\label{um0um0}
\sup_{i=1,\cdots,N} \E[\sup_{0 \leq s\leq t} \vert \xi^{i,N}_s-\bar{Y}^{i,N}_s\vert ^2] \leq C \int_0^t \left( \E[ \Vert u_s^{S^N(\mathbf{\xi})}-u^{m^0}_s \Vert_{\infty}^2 ] + \sup_{i=1,\cdots,N} \E[ \sup_{0 \leq r \leq s} \vert \xi^{i,N}_r - \bar{Y}^{i,N}_r \vert^2 ] \right) ds.
\end{eqnarray}
We use inequalities \eqref{eq:uu'} for 
%(applied pathwise with
 $m = S^N(\xi)(\bar{\omega})$ and $m' = S^N({\bf{\bar{Y}}})(\bar{\omega})$),
where $\bar{\omega}$ is a random realization in $\Omega$
 and \eqref{eq:uu'Linf} (with the random probability $ \eta = S^N({\bf{\bar{Y}}})$ and $m = m^0$)
 in item 5. of Proposition \ref{lem:uu'}. This  yields
\begin{eqnarray}
\label{uSNxium0}
\E[ \Vert u_t^{S^N(\mathbf{\xi})}-u^{m^0}_t \Vert_{\infty}^2 ] & \leq & 2 
\E\left[ \Vert u_t^{S^N(\mathbf{\xi})}-u^{S^N(\mathbf{\bar{Y}})}_t \Vert_{\infty}^2 \right]
+ 2  \E[ \Vert u^{S^N(\mathbf{\bar{Y}})}_t-u_t^{m^0} \Vert_{\infty}^2 ] \nonumber \\
& \leq & 2 C \E[ \vert W_t(S^N(\mathbf{\xi}),S^N(\mathbf{\bar{Y}})) \vert^2] + 2 C \sup_{ \underset{\Vert \varphi \Vert_{\infty} \le 1}{\varphi \in \shc_b(\shc^d)}} \E{ [ \vert \langle S^N(\mathbf{\bar{Y}})  - m^0 , \varphi \rangle \vert^2 ]} \nonumber \\
& \leq & \frac{2 C}{N} \sum_{i=1}^N \E[\sup_{0 \leq s\leq t} \vert \xi^{i,N}_s-\bar{Y}^{i,N}_s \vert ^2] + C \sup_{ \underset{\Vert \varphi \Vert_{\infty} \le 1}{\varphi \in \shc_b(\shc^d)}} \E[ \vert \langle S^N(\mathbf{\bar{Y}})  - m^0 , \varphi \rangle \vert^2 ] \nonumber \\
& \leq & 2 C \sup_{i=1,\cdots,N} \E[\sup_{0 \leq s\leq t} \vert \xi^{i,N}_s-\bar{Y}^{i,N}_s \vert ^2] \nonumber \\
&& + \; C \sup_{ \underset{\Vert \varphi \Vert_{\infty} \le 1}{\varphi \in \shc_b(\shc^d)}} \E[ \vert \langle S^N(\mathbf{\bar{Y}})  - m^0 , \varphi \rangle \vert^2 ],
\end{eqnarray}
where the third inequality follows from Remark \ref{RADelta}. \\
Let us introduce the non-negative function $G$ defined on $[0,T]$ by 
%$$g(t):= \E[ \Vert u_t^{S^N(\mathbf{\xi})}-u^{m^0}_t \Vert_{\infty}^2 ] + \frac{1}{N} \sum_{j=1}^N \E[\sup_{0 \leq s\leq t} \vert \xi^{j,N}_{s}-Y^j_{s} \vert ^2] \ .
%$$
$$
G(t):= \E[ \Vert u_t^{S^N(\mathbf{\xi})}-u^{m^0}_t \Vert_{\infty}^2 ] + \sup_{i=1,\cdots,N} \E[\sup_{0 \leq s\leq t} \vert \xi^{i,N}_s-\bar{Y}^{i,N}_s \vert ^2] \ .
$$
From  inequalities \eqref{um0um0} and \eqref{uSNxium0} that are valid for all $t \in [0,T]$, we obtain
\begin{eqnarray}
G(t) & \leq & % 3 OLD
(2C + 1) \sup_{i=1,\cdots,N} \E[\sup_{0 \leq s\leq t} \vert \xi^{i,N}_s-\bar{Y}^{i,N}_s \vert ^2] + C \sup_{ \underset{\Vert \varphi \Vert_{\infty} \le 1}{\varphi \in \shc_b(\shc^d)}} \E[ \vert \langle S^N(\mathbf{\bar{Y}})  - m^0 , \varphi \rangle \vert^2 ] \nonumber \\
& \leq & C \int_0^t \left( \E[ \Vert u_s^{S^N(\mathbf{\xi})}-u^{m^0}_s \Vert_{\infty}^2 ] + \sup_{i=1,\cdots,N} \E[ \sup_{0 \leq r \leq s} \vert \xi^{i,N}_r - \bar{Y}^{i,N}_r \vert^2 ] \right) ds \nonumber \\
& & + \; C \sup_{ \underset{\Vert \varphi \Vert_{\infty} \le 1}{\varphi \in \shc_b(\shc^d)}} \E[ \vert \langle S^N(\mathbf{\bar{Y}})  - m^0 , \varphi \rangle \vert^2 ] \nonumber \\
& \leq & C \int_0^t G(s) ds + C \sup_{ \underset{\Vert \varphi \Vert_{\infty} \le 1}{\varphi \in \shc_b(\shc^d)}} \E[ \vert \langle S^N(\mathbf{\bar{Y}})  - m^0 , \varphi \rangle \vert^2 ] \ .
\end{eqnarray}
By Gronwall's lemma, for all $t \in [0,T]$, we obtain
\begin{eqnarray}
\label{PCh1}
\E[ \Vert u_t^{S^N(\mathbf{\xi})}-u^{m^0}_t \Vert_{\infty}^2 ] + \sup_{i=1,\cdots,N} \E[\sup_{0 \leq s\leq t} \vert \xi^{i,N}_{s}-\bar{Y}^{i,N}_{s} \vert ^2] \leq Ce^{Ct} \sup_{ \underset{\Vert \varphi \Vert_{\infty} \le 1}{\varphi \in \shc_b(\shc^d)}} \E[ \vert \langle S^N(\mathbf{\bar{Y}})  - m^0 , \varphi \rangle \vert^2 ] \ .
\end{eqnarray}
This concludes the proof of Proposition \ref{PMoreGeneral}.
\end{proof}
From now on, we prove Theorem \ref{TPC},
\begin{proof}[Proof of Theorem \ref{TPC}]
As we have mentioned above we will apply 
%Inequalities \eqref{eq:xiYuFinalb1} and \eqref{eq:xiYuFinalb2} will be a consequence of
Proposition \ref{PMoreGeneral} setting for all $i \in \{1,\cdots,N\}$, $W^{i,N} := W^i$.
 Pathwise uniqueness of systems \eqref{eq:Yi} and \eqref{eq:Yi-general} implies $\bar{Y}^{i,N} = Y^i$ for all $i \in \{1,\cdots,N\}$.
Taking into account \eqref{eq:xiYuFinalGeneral} in
Proposition \ref{PMoreGeneral}, in order to establish
inequalities \eqref{eq:xiYuFinalb1} and \eqref{eq:xiYuFinalb2},
 we need to bound the quantity $\displaystyle{\sup_{ \underset{\Vert \varphi \Vert_{\infty} \le 1}{\varphi \in \shc_b(\shc^d)}} \E[ \vert \langle S^N(\mathbf{Y})  - m^0 , \varphi \rangle \vert^2 ] } $ .
% To this end, it is enough to apply Proposition \ref{PMoreGeneral},
% in particular \eqref{eq:xiYuFinalGeneral},
%  by setting for all $i \in \{1,\cdots,N\}$, $W^{i,N} := W^i$. Pathwise uniqueness of systems \eqref{eq:Yi} and \eqref{eq:Yi-general} implies $\bar{Y}^{i,N} = Y^i$ for all $i \in \{1,\cdots,N\}$.
This is possible via \eqref{majorNormFaible} in Remark \ref{RMoreGeneral},
since $(Y^i)_{i=1,\cdots,N}$ are i.i.d. according to $m^0$.
 %inequalities \eqref{eq:xiYuFinalb1} and \eqref{eq:xiYuFinalb2} follow from item 1. of Remark \ref{RMoreGeneral}. \\
This concludes the proof of item 1.\\
It remains now to prove \eqref{eq:xiYuFinal3} in item 2. First, the  inequality  
\begin{equation}
\label{SNxim0L2}
\E[ \Vert u_t^{S^N(\xi)}-u^{m^0}_t\Vert_{2} ^2] \leq 2\E[ \Vert u_t^{S^N(\xi)}-u^{S^N(\mathbf{Y})}_t\Vert_{2} ^2] + 2\E[ \Vert u_t^{S^N(\mathbf{Y})}-u^{m^0}_t\Vert_{2} ^2],
\end{equation}
holds for all $t \in [0,T]$.
Using  inequality \eqref{uu'L2} of Proposition \ref{lem:uu'}, for all $t \in [0,T]$,
for $m = S^N(\xi), m'= S^N(\mathbf{Y})$,
we get
\begin{eqnarray}
\label{SNxiSNYL2}
\E[ \Vert u_t^{S^N(\xi)}-u^{S^N(\mathbf{Y})}_t\Vert_{2} ^2] & \leq & C \E{[ W_t(S^N(\xi),S^N(\mathbf{Y}))^2]} \nonumber \\
& \leq & C \frac{1}{N} \sum_{j=1}^N \E{[ \sup_{0 \leq r \leq t} \vert \xi^{j,N}_r-Y^j_r \vert^2 ]} \nonumber \\
& \leq & \frac{C}{N} \ ,
\end{eqnarray}
where the latter inequality is obtained through \eqref{eq:xiYuFinalb2}. The second term of the r.h.s. in \eqref{SNxim0L2} needs more computations. Let us fix $i \in  \{1,\cdots,N\}$.
First, 
\begin{equation}
\label{major-uu'L2}
\E[ \Vert u_t^{S^N(\mathbf{Y})}-u^{m^0}_t\Vert_{2} ^2] \leq 2 \left( \E{[ \Vert A_t \Vert_2^2 ]} + \E{[ \Vert B_t \Vert_2^2 ]}  \right) \ ,
\end{equation}
where,
for all $t \in [0,T]$
\begin{equation}
\label{def-AB}
\left \{
\begin{array}{l}
A_t(x):= {\displaystyle \frac{1}{N}\sum_{j=1}^N K(x-Y^j_t)\Big [ V_t\big (Y^j,u^{S^N(\mathbf{Y})}(Y^j)\big ) - V_t\big (Y^j,u^{m^0}(Y^j)\big )\Big ]} \\
B_t(x) := \displaystyle{ \frac{1}{N} \sum_{j=1}^N  K(x-Y^j_t) V_t\big (Y^j,u^{m^0}(Y^j)\big )-\E\Big [K(x-Y^1_t) V_t\big (Y^1,u^{m^0}(Y^1)\big ) \Big]} \ ,
\end{array}
\right .
\end{equation}
where we recall that $m^0$ is the common law of all the processes $Y^i, 1 \le i \le N$. \\
To simplify notations, we set $P_j(t,x) := K(x-Y^j_t) V_t\big (Y^j,u^{m^0}(Y^j)\big )-\E\Big [K(x-Y^1_t) V_t\big (Y^1,u^{m^0}(Y^1)\big ) \Big]$ for all $j \in \{1,\cdots,N\}$, $x \in \R^d$ and $t \in [0,T]$. \\
We observe that for all $x \in \R^d, t \in [0,T]$, $(P_j(t,x))_{j=1,\cdots,N}$ are i.i.d. centered r.v.  Hence, 
$$
\E{[B_t(x)^2]} = \frac{1}{N} \E{[P_1^2(t,x)]} = \frac{1}{N}Var \big(P_1(t,x) \big) \leq \frac{1}{N} \E{[K^2 (x-Y^1_t)
 V^2_t\big (Y^1,u^{m^0}(Y^1)\big )]} \leq \frac{M_K e^{2tM_{\Lambda}}}{N} \E{[ K(x-Y^1_t) ]}.
$$ 
By integrating each side of the  inequality above w.r.t. $x \in \R^d$, we obtain
\begin{equation}
\label{majorA}
 \E{\left[ \int_{\R^d} \vert B_t(x) \vert^2 dx\right]} = \int_{\R^d} \E{[\vert B_t(x) \vert^2]} dx \leq \frac{M_K e^{2tM_{\Lambda}}}{N} \ ,
\end{equation}
where we have used that $\Vert K \Vert_1 = 1$. \\
Concerning $A_t(x)$, we write
\begin{eqnarray}
\label{majorB}
\vert A_t(x) \vert^2 & \leq & \frac{1}{N}\sum_{j=1}^N K(x-Y^j_t)^2\Big [ V_t\big (Y^j,u^{S^N(\mathbf{Y})}(Y^j)\big )
- V_t\big (Y^j,u^{m^0}(Y^j)\big )\Big ]^2 \nonumber \\
& = & \frac{1}{N}\sum_{j=1}^N K(x-Y^j_t) K(x-Y^j_t)\Big [ V_t\big (Y^j,u^{S^N(\mathbf{Y})}(Y^j)\big )
- V_t\big (Y^j,u^{m^0}(Y^j)\big )\Big ]^2 \nonumber \\
& \leq & \frac{M_K T}{N} e^{2tM_{\Lambda}}L_{\Lambda}^2 \sum_{j=1}^N K(x-Y^j_t) \int_0^t \vert u_r^{S^N(\mathbf{Y})}(Y^j_r)-u_r^{m^0}(Y^j_r) \vert^2 dr \nonumber \\
% & \leq & \frac{M_K T}{N} e^{2tM_{\Lambda}}L_{\Lambda}^2 \sum_{j=1}^N K(x-Y^j_t) \int_0^t \sup_{x \in %\R^d}\vert u_r^{S^N(\mathbf{Y})}(x)-u_r^{m^0}(x) \vert^2 dr \nonumber \\
& \le & \frac{M_K T}{N} e^{2tM_{\Lambda}}L_{\Lambda}^2 \sum_{j=1}^N K(x-Y^j_t) \int_0^t \Vert u_r^{S^N(\mathbf{Y})}-u_r^{m^0} \Vert_{\infty}^2 dr,  \nonumber \\
\end{eqnarray}
where the third inequality comes from \eqref{eq:Vmajor2}. Integrating w.r.t. $x \in \R^d$ and taking expectation on each side of the above inequality gives us,
 for all $t \in [0,T]$,
\begin{eqnarray}
\label{majorBFinal}
\E{[ \int_{\R^d} \vert A_t(x) \vert^2 dx]} & \leq & M_K T e^{2tM_{\Lambda}}L_{\Lambda}^2 \int_0^t \E{[ \Vert u_r^{S^N(\mathbf{Y})}-u_r^{m^0} \Vert_{\infty}^2 ]} dr \nonumber \\
& \leq & M_K T^2 e^{2tM_{\Lambda}}L_{\Lambda}^2 C \sup_{ \underset{\Vert \varphi \Vert_{\infty} \le 1}{\varphi \in \shc_b(\shc^d)}} 
  \E{ [ \vert \langle S^N(\mathbf{Y})  - m^0 , \varphi \rangle \vert^2 ]} \nonumber \\
  & \leq & \frac{M_K T^2 e^{2tM_{\Lambda}}L_{\Lambda}^2 C}{N} \ ,
\end{eqnarray}
where we have used \eqref{eq:uu'Linf} of Proposition \ref{lem:uu'} for the second inequality above and \eqref{majorNormFaible} for the latter one. To conclude, it is enough to replace \eqref{majorA}, \eqref{majorBFinal} in \eqref{major-uu'L2}, and inject \eqref{SNxiSNYL2}, \eqref{major-uu'L2} into \eqref{SNxim0L2}.

\end{proof}

\section{Particle algorithm }
%%%%%%%%%%%%%%%%%%%%%%%%%%%%%%%%%%%
\label{S8}

\setcounter{equation}{0}

\subsection{Time discretization of the particle system}
\label{time-discretization} 
%%%%%%%%%%%%%%%%%%%%%%%%%%%%%%%%%%%

In this section Assumption \ref{ass:main2}. is in force.
Let $(Y_0^i)_{i=1,\cdots,N}$ be i.i.d. r.v. distributed according to $\zeta_0$.
In the sequel, we are interested in discretizing the interacting particle system
\eqref{eq:XIi}.   $(\xi^{i,N}, 1 \le i \le N$)  will denote again the corresponding solution.
Let us consider a regular time grid $0=t_0\leq \cdots\leq t_k=k\delta t\leq \cdots \leq t_n=T$, with $\delta t=T/n$.
We introduce the  continuous $\R^{dN}$-valued process $(\tilde \xi_t)_{t\in [0,T]}$ and the family of nonnegative functions $(\tilde{u}_t)_{t\in [0,T]}$ defined on $\R^d$ constructively such that 
\begin{equation}
\label{eq:tildeYu}
\left \{\begin{array}{l}
%\tilde{u}_0=u_0\\
\tilde \xi^{i,N}_{t}=\tilde \xi^{i,N}_{0}+\int_0^t \Phi(r(s),\tilde{\xi}^{i,N}_{r(s)},\tilde{u}_{r(s)}(\tilde \xi^{i,N}_{r(s)}))dW^i_{s}+\int_0^t g(r(s),\tilde{\xi}^{i,N}_{r(s)},\tilde{u}_{r(s)}(\tilde \xi^{i,N}_{r(s)}))ds \\ %\quad \textrm{with}\ (\tilde \xi^{i,N}_0)_{i=0,\cdots N}\ \textrm{i.i.d.}\ \sim \tilde{u}_0\\
\tilde{\xi}^{i,N}_0 = Y^i_0 \\
\tilde{u}_{t}(y)=\frac{1}{N}\sum_{j=1}^N K(y-\tilde \xi^{j,N}_{t})\exp\big \{\int_0^t \Lambda(r(s),\tilde \xi^{j,N}_{r(s)},\tilde{u}_{r(s)}(\tilde \xi^{j,N}_{r(s)}))\,ds\big \} \ ,\ \textrm{for any}\ t\in ]0,T],\\
\tilde{u}_0 = K * \zeta_0,
\end{array}
\right .
\end{equation}
where $r:\,s\in [0,T]\,\mapsto r(s)\in \{t_0,\cdots t_n\}$ is the piecewise constant function such that $r(s)=t_k$ when $s\in [t_k,t_{k+1}[$. We can observe that $(\tilde{\xi}^{i,N})_{i=1,\cdots,N}$ is an adapted and continuous process. 
The interacting particle system $(\tilde \xi^{i,N})_{i=1,\cdots N}$ can be simulated perfectly at the discrete 
instants $(t_k)_{k=0,\cdots,n}$ via  independent standard and centered Gaussian random variables.
 We will show that this interacting particle system provides an approximation to the solution  $(\xi^{i,N})_{i=1,\cdots N}$, 
 of  system~\eqref{eq:XIi}, which converges at a rate bounded by
 $  \sqrt{\delta t}$, up to a multiplicative constant.
\begin{prop}
\label{prop:DiscretTime}
Let us suppose the validity of Assumption \ref{ass:main2}.
The time discretized particle system~\eqref{eq:tildeYu} converges to the original particle system~\eqref{eq:XIi}. More precisely, for all $t \in [0,T]$, the following estimates hold:
\begin{equation}
\label{eq:tildeYMajor}
\E[\Vert \tilde{u}_t-u_t^{S^N(\xi)}\Vert_{\infty}^2]+\sup_{i=1,\cdots, N} \E\left [\sup_{s\leq t}\vert \tilde  \xi^{i,N}_{s}-  \xi^{i,N}_{s}\vert^2\right ]\leq C\delta t\ ,
\end{equation}
where $C$ is a finite positive constant only depending on $M_K,M_{\Lambda},L_K,L_{\Lambda}, m_\Phi, m_g, T$.  \\
If we assume moreover that $K\in W^{1,2}(\R^d)$, then 
%the following Mean Integrated Squared Error (MISE) estimate holds: 
\begin{equation}
\label{eq:tildeYMajorBis}
\E[\Vert \tilde{u}_t-u_t^{S^N(\xi)}\Vert_{2}^2]\leq C\delta t \ , \quad t \in [0,T],
\end{equation}
where $C$ is a finite positive constant only depending on $M_K,M_{\Lambda},m_{\Phi}, m_g,L_K,L_{\Lambda},T$ and $\Vert \nabla K\Vert_2$. 
 
The left-hand side of \eqref{eq:tildeYMajorBis} is generally known, as Mean Integrated Squared Error (MISE).
\end{prop}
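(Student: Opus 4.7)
The plan is to combine two sources of error in a coupled Gronwall argument: the strong error $\sup_i \E[\sup_{s\le t}|\tilde\xi^{i,N}_s-\xi^{i,N}_s|^2]$ coming from the Euler-type discretization of the SDEs, and the error $\E[\|\tilde u_t - u^{S^N(\xi)}_t\|_\infty^2]$ coming from both the discretization of the paths in the weight function and the time-frozen argument of $\Lambda$. Since $\tilde u$ depends on $\tilde\xi$ and conversely the coefficients of the SDE for $\tilde\xi$ depend on $\tilde u$, neither quantity can be treated in isolation.

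First I would establish a uniform in $n$ moment bound $\sup_i \E[\sup_{s\le T}|\tilde\xi^{i,N}_s|^2]<\infty$ (using that $|\tilde u|\le M_K e^{TM_\Lambda}$ is uniformly bounded by construction together with the linear growth of $\Phi$ and $g$ coming from $m_\Phi, m_g$ and Lipschitzianity, then standard BDG plus Gronwall). This moment bound, combined with the boundedness of the coefficients' sublinear part, gives the one-step estimate
\begin{equation*}
\sup_i \E[|\tilde\xi^{i,N}_s-\tilde\xi^{i,N}_{r(s)}|^2] \le C\,\delta t,\qquad s\in[0,T],
\end{equation*}
again by BDG. Next I would apply item 2 of Lemma \ref{lem:yy'} (in the straightforward random-field extension justified by the non-anticipating property \eqref{NonAntip} and Corollary \ref{Canticip}) with $Y=\xi^{i,N}$, $Y'=\tilde\xi^{i,N}$, $r_1=\mathrm{id}$, $r_2=r$, $\shu=u^{S^N(\xi)}$ and $\shu'=\tilde u$. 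Since $\|r_1-r_2\|_{L^1([0,T])}\le T\delta t/2$, this yields
\begin{equation*}
\sup_i\E[\sup_{s\le t}|\xi^{i,N}_s-\tilde\xi^{i,N}_s|^2]\le C\delta t + C\int_0^t \sup_i \E[|u^{S^N(\xi)}_s(\xi^{i,N}_s)-\tilde u_{r(s)}(\tilde\xi^{i,N}_{r(s)})|^2]\,ds.
\end{equation*}

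Next I would control the integrand by inserting the intermediate terms $\tilde u_s(\xi^{i,N}_s)$ and $\tilde u_s(\tilde\xi^{i,N}_s)$, using the Lipschitzianity of $\tilde u_s$ in space (inherited from $L_K$ and the uniform bound on $V$), and bounding $|\tilde u_s(\tilde\xi^{i,N}_s)-\tilde u_{r(s)}(\tilde\xi^{i,N}_{r(s)})|$ by the $1/2$-Hölder regularity in time of $\Phi,g,\Lambda$ together with the one-step estimate above. In parallel, I would compare directly
\begin{equation*}
\tilde u_t(y)-u^{S^N(\xi)}_t(y)=\frac1N\sum_{j=1}^N\bigl[K(y-\tilde\xi^{j,N}_t)\tilde V^j_t-K(y-\xi^{j,N}_t)V^j_t\bigr],
\end{equation*}
with $\tilde V^j$ and $V^j$ the discretized and continuous exponential weights respectively. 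Using Assumption \ref{ass:main2}, $L_K$, $M_K$, the inequality \eqref{EMajor1}, and plugging the Hölder-in-time and Lipschitz-in-space estimates, one obtains after squaring, taking the supremum in $y$ and then expectation
\begin{equation*}
\E[\|\tilde u_t-u^{S^N(\xi)}_t\|_\infty^2]\le C\delta t + C\sup_i\E[\sup_{s\le t}|\tilde\xi^{i,N}_s-\xi^{i,N}_s|^2] + C\int_0^t \E[\|\tilde u_s-u^{S^N(\xi)}_s\|_\infty^2]\,ds.
\end{equation*}
Summing the two inequalities and applying Gronwall yields \eqref{eq:tildeYMajor}. The main obstacle is the circular coupling: each bound feeds into the other through the $\tilde u \leftrightarrow \tilde\xi$ dependence, and care must be taken that all Lipschitz estimates on $\tilde u$ are expressed in the sup-norm (rather than pointwise) so that Gronwall closes; this is precisely why the sup-norm term and the pathwise sup are added in a single scalar function $G(t)$ before applying Gronwall.

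For the MISE bound \eqref{eq:tildeYMajorBis} under $K\in W^{1,2}(\R^d)$, I would repeat the direct comparison of $\tilde u_t-u^{S^N(\xi)}_t$ above, but integrate in $y$ rather than take the supremum. The key substitute for the Lipschitz estimate on $K$ is the identity $\int_{\R^d}|K(y-x)-K(y-x')|^2dy\le \|\nabla K\|_2^2|x-x'|^2$, obtained by the fundamental theorem of calculus and Fubini; this replaces $L_K^2$ by $\|\nabla K\|_2^2$ in the first contribution. The contribution of $|\tilde V^j-V^j|$ is handled using $\|K\|_1=1$ and the same $\|\cdot\|_\infty$ estimate on $\tilde u-u^{S^N(\xi)}$ already established in \eqref{eq:tildeYMajor}, so that no new coupling is created. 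Gronwall then gives \eqref{eq:tildeYMajorBis}.
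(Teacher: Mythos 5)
Your proposal is correct and its backbone coincides with the paper's: a coupled Gronwall argument on the sum $\E[\Vert \tilde u_t-u^{S^N(\xi)}_t\Vert_\infty^2]+\sup_i\E[\sup_{s\le t}\vert\tilde\xi^{i,N}_s-\xi^{i,N}_s\vert^2]$, fed by the one-step estimate $\E[\vert\tilde\xi^{i,N}_s-\tilde\xi^{i,N}_{r(s)}\vert^2]\le C\delta t$ and by item 2 of Lemma \ref{lem:yy'} applied to the random fields $u^{S^N(\xi)}$ and $\tilde u$ (legitimized, as you note, by \eqref{NonAntip}). Where you genuinely diverge is in how the $u$-error is decomposed. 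The paper pivots through the auxiliary function $u^{S^N(\tilde\xi)}$, i.e.\ the exact linking map evaluated at the empirical measure of the discretized particles: it writes $\Vert\tilde u_t-u^{S^N(\xi)}_t\Vert_\infty\le\Vert\tilde u_t-\tilde u_{r(t)}\Vert_\infty+\Vert\tilde u_{r(t)}-u^{S^N(\tilde\xi)}_t\Vert_\infty+\Vert u^{S^N(\tilde\xi)}_t-u^{S^N(\xi)}_t\Vert_\infty$, handles the first two pieces in the appendix (Lemmas \ref{lem:Discrete} and \ref{lem:ViVi'}, with weights built from $u^{S^N(\tilde\xi)}$ along $\tilde\xi$) and the third via the Wasserstein stability \eqref{eq:uu'} (and \eqref{uu'L2} for the $L^2$ case). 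You instead compare $\tilde u_t$ and $u^{S^N(\xi)}_t$ directly, particle by particle, with weights defined along the continuous particles, using only $M_K$, $L_K$, the boundedness and H\"older/Lipschitz properties of $\Lambda$ and \eqref{EMajor1}; for the MISE you re-derive the bound $\int\vert K(y-x)-K(y-x')\vert^2dy\le\Vert\nabla K\Vert_2^2\vert x-x'\vert^2$ instead of invoking item 4 of Proposition \ref{lem:uu'}, and you use $\Vert K\Vert_1=1$ for the weight contribution exactly as the paper does. Your route is more self-contained at the level of this proposition (it essentially re-proves the content of Lemmas \ref{lem:Discrete} and \ref{lem:ViVi'} inline and also exploits the explicit Lipschitz constant $L_Ke^{TM_\Lambda}$ of $\tilde u_s$, so \eqref{eq:uu'} is not even needed for the sup-norm part), while the paper's route reuses the stability machinery already established and isolates the pure time-discretization error in separate lemmas. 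Two cosmetic remarks: the time increment $\Vert\tilde u_s-\tilde u_{r(s)}\Vert_\infty$ needs only the boundedness of $\Lambda$ (not its H\"older regularity in time, which is however genuinely needed when comparing $\tilde V^j$ with $V^j$), and in the $L^2$ step no further Gronwall is required since every term is already reduced to quantities controlled by \eqref{eq:tildeYMajor} — but neither point affects the validity of your argument.
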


The result below states the convergence of $\tilde{u}_t$ to $u^{m^0}_t$ when $\delta t \rightarrow 0$ and $N \rightarrow + \infty$, with an explicit rate of convergence.
\begin{thm}
\label{CS8}
We suppose Assumption \ref{ass:main2}. 
 We indicate by $m^0$  the law of $Y$, where $(Y,u)$ is the solution  of \eqref{eq:NSDE}.
The time discretized particle system~\eqref{eq:tildeYu} converges to the solution of \eqref{eq:NSDE}.
 More precisely, we have the following. We suppose 
${\mathcal F}(K) \in L^1$ (resp. $K\in W^{1,2}(\R^d)$).
 There exists a real constant $C > 0$ such that for all $t \in [0,T]$,
\begin{equation}
\label{eq:CS8}
 \E[ \Vert u^{m^0}_t - \tilde{u}_t \Vert_{\infty} ^2] \leq C ( \delta t + \frac{1}{N} ),
\end{equation}
(respectively 
\begin{equation}
\label{eq:CS8bis}
 \E[ \Vert u^{m^0}_t - \tilde{u}_t \Vert_{2} ^2] \leq C ( \delta t + \frac{1}{N} ) \ ).
\end{equation}

\end{thm}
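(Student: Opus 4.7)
The plan is to decompose the global error via the triangle inequality into two contributions that have already been controlled separately in the paper. For the $L^\infty$ estimate I would write
\begin{equation*}
\E[\Vert u^{m^0}_t - \tilde u_t\Vert_\infty^2] \leq 2\,\E[\Vert u^{m^0}_t - u^{S^N(\xi)}_t\Vert_\infty^2] + 2\,\E[\Vert u^{S^N(\xi)}_t - \tilde u_t\Vert_\infty^2],
\end{equation*}
and analogously for the $L^2$ estimate with $\Vert \cdot \Vert_2$ replacing $\Vert \cdot \Vert_\infty$. The two pieces correspond respectively to the particle approximation error (a spatial/statistical error) and the time discretization error.

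The first term is controlled by Theorem \ref{TPC}. Item 1 of that theorem gives the $L^\infty$ bound by $C/N$ provided $\shf(K)\in L^1(\R^d)$, and item 2 gives the $L^2$ bound by $C/N$ provided $K\in W^{1,2}(\R^d)$, which are precisely the two kernel hypotheses assumed in the statement of Theorem \ref{CS8}. A small but necessary check here is that although Theorem \ref{TPC} is formulated under Assumption \ref{ass:main}, the current Assumption \ref{ass:main2} only strengthens items 1 and 2 (by adding a $\tfrac{1}{2}$-Hölder dependence in time) while preserving the Lipschitz conditions in space, so Theorem \ref{TPC} applies without modification.

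The second term is controlled by Proposition \ref{prop:DiscretTime}, which gives exactly $\E[\Vert \tilde u_t - u^{S^N(\xi)}_t\Vert_\infty^2]\leq C\delta t$ under Assumption \ref{ass:main2} (inequality \eqref{eq:tildeYMajor}), and the analogous $L^2$ bound \eqref{eq:tildeYMajorBis} when $K\in W^{1,2}(\R^d)$. Adding the two contributions and absorbing the factor $2$ into the constant yields \eqref{eq:CS8} and \eqref{eq:CS8bis} respectively.

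In this sense there is no genuine obstacle: the whole work has been done upstream, in Theorem \ref{TPC} (where Proposition \ref{PMoreGeneral} together with the Wasserstein/weak-distance stability estimates of Proposition \ref{lem:uu'} were the key tools) and in Proposition \ref{prop:DiscretTime}. The only care to take is bookkeeping: verify the matching of hypotheses when passing from Assumption \ref{ass:main} to Assumption \ref{ass:main2}, verify that the kernel assumptions $\shf(K)\in L^1(\R^d)$ and $K\in W^{1,2}(\R^d)$ line up with the $L^\infty$ and $L^2$ estimates respectively, and check that the final constant $C$ depends only on the parameters $L_\Phi, L_g, M_K, M_\Lambda, L_K, L_\Lambda, m_\Phi, m_g, T$ (plus $\Vert \shf(K)\Vert_1$ or $\Vert \nabla K\Vert_2$), which is automatic since both upstream constants enjoy that dependence.
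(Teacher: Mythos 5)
Your proposal is correct and coincides with the paper's own proof: the same triangle-inequality decomposition into the particle approximation error, bounded by $C/N$ via Theorem \ref{TPC} (inequality \eqref{eq:xiYuFinalb1}, resp. \eqref{eq:xiYuFinal3}), and the time discretization error, bounded by $C\delta t$ via Proposition \ref{prop:DiscretTime} (inequality \eqref{eq:tildeYMajor}, resp. \eqref{eq:tildeYMajorBis}). Your added check that Assumption \ref{ass:main2} implies Assumption \ref{ass:main} so that Theorem \ref{TPC} remains applicable is a sound piece of bookkeeping that the paper leaves implicit.
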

\begin{rem} \label{eq:R68} 
When $\Lambda=0$
and  $\Phi$ and $g$ are infinitely differentiable with all 
     derivatives being bounded,  Corollary 1.1 of   \cite{KO97}
states that,  for fixed smooth test function
with polynomial growth $\varphi$, one has
\begin{equation} \label{EPhi}
  \mathbb{E}(\vert \langle S^N(\tilde \xi_t)- m^0_t, \varphi \rangle\vert)
\le C_\varphi (\frac{1}{\sqrt{N}} + \delta t), \quad {\rm where \ again} \quad
S^N(\mathbf{\tilde{\xi_t}}):=\frac{1}{N} \sum_{i=1}^N \delta_{\tilde{\xi}_t^{i,N}}\ .
\end{equation}
This leads reasonnably to the conjecture that the rate in \eqref{eq:CS8} is not optimal and
it could be replaced by  $(\delta t)^2 + \frac{1}{N}$. This intuition will be confirmed
by numerical simulations in Section \ref{SNum}.
\end{rem}

\begin{proof}
We first observe that for all $t \in [0,T]$, for $p = \infty, 2$
\begin{eqnarray}
\label{eq:CS8_2}
\E[ \Vert u^{m^0}_t - \tilde{u}_t \Vert_{p} ^2] & \leq & 2 \E[ \Vert u^{m^0}_t - u_t^{S^N(\xi)}\Vert_{p} ^2] +
2 \E[\Vert u_t^{S^N(\xi)} -  \tilde{u}_t\Vert_{p}^2] \ .
\end{eqnarray}
%The first term in the r.h.s. of \eqref{eq:CS8_2} comes from the (strong) convergence of the particle system $(\xi^{i,N})_{i=1,\cdots,N}$ to $(Y^i)_{i=1,\cdots,N}$ 
%whose  convergence is of order $\frac{1}{N}$, see Theorem \ref{TPC}, inequality \eqref{eq:xiYuFinalb1}. \\
The first term in the r.h.s. of \eqref{eq:CS8_2} is bounded by 
$\frac{C}{N}$ using  Theorem \ref{TPC}, inequality \eqref{eq:xiYuFinalb1}
(respectively \eqref{eq:xiYuFinal3}). \\
The  second term of the same inequality is controlled  by $C \delta t$,
%time discretization whose expected  squared error is of order $ \delta t$ Proposition \ref{prop:DiscretTime}, inequality \eqref{eq:tildeYMajor}.
through  Proposition \ref{prop:DiscretTime}, inequality \eqref{eq:tildeYMajor} (resp.
\eqref{eq:tildeYMajorBis}).
\end{proof}
%\begin{rem} \label{RPIDE}
%We keep in mind the probability measure $m_0$ defined at Section \ref{SChaos},
%which is the law of processes $Y^i$, solutions of \eqref{eq:Yi}.
% We claim that $\tilde{u}$ can be used as a numerical approximation to the function $u^{m_0}$;
%we remind that by Theorem 6.1 of \cite{LOR1}, $u^{m_0}$ is associated with a solution $\gamma^{m_0}$
%of the PIDE~\eqref{epide} via the relation $u^m = K * \gamma^m$.
%
%The committed expected squared error 
%$\E[ \Vert u^{m_0}_t - \tilde{u}_t \Vert_{\infty} ^2]$ 
%is lower than $C(T)(\delta t + 1/N)$, for a given finite constant $C(T)$. Indeed, it is bounded as follows:
%%the error $\E[ \Vert u^{m_0}_t - \tilde{u}_t \Vert_{\infty} ^2]$ as follows
%$$
% \E[ \Vert u^{m_0}_t - \tilde{u}_t \Vert_{\infty} ^2] \leq 2 \E[ \Vert u^{m_0}_t - u_t^{S^N(\xi)}\Vert_{\infty} ^2] +
% 2 \E[\Vert u_t^{S^N(\xi)} -  \tilde{u}_t\Vert_{\infty}^2].
%$$
%\end{rem}

The proof of Proposition  \ref{prop:DiscretTime} relies on similar techniques used to prove Theorem \ref{TPC}.
%is close to the one of  Theorem \ref{TPC}.
The idea is first to estimate through Lemma \ref{lem:Discrete}
 the perturbation error due to the time discretization scheme of the SDE and of the integral
appearing in the exponential weight
in    system~\eqref{eq:tildeYu}.
%  and    the integral
% appearing in the linking equation of~\eqref{eq:tildeYu}. 
Later the propagation of this error  through the dynamical system~\eqref{eq:XIi}
will be controlled via  Gronwall's lemma. 
Lemma \ref{lem:Discrete} below will be proved in the Appendix.

\begin{lem}
\label{lem:Discrete}
%Under the same assumptions of Proposition~\ref{prop:DiscretTime},
Let us suppose the validity of Assumption \ref{ass:main2}.
There exists a finite constant $C>0$ only depending on $T,M_K,m_{\Phi},m_g,L_K,L_\Phi,L_g$ and $M_{\Lambda},L_{\Lambda}$ such that for any $t\in [0,T]$, 
%ANTHONY Y A-T-IL ENCORE  $\sup_{s \in [0,T]} (\vert \Phi(s,0,0) \vert + \vert g(s,0,0)$ OU AU MOINS AUTRE CHOSE? \\
\begin{eqnarray}
\label{eq:vtilde}
\E[\vert \tilde \xi ^{i,N}_{r(t)}-\tilde \xi^{i,N}_{t}\vert ^2]\leq C\delta t
 \\
\label{E82}
\E[\Vert \tilde{u}_{r(t)}-\tilde{u}_t\Vert^2_{\infty}]\leq C\delta t\\
\label{E83}
\E[\Vert \tilde{u}_{r(t)}-u^{S^N(\tilde \xi)}_t\Vert^2_{\infty}]\leq C\delta t\ .
\end{eqnarray}
\end{lem}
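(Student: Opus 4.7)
The plan is to establish the three estimates in order, since \eqref{E82} rests on \eqref{eq:vtilde}, and \eqref{E83} will follow by a Gronwall argument that uses both \eqref{eq:vtilde} and \eqref{E82}. Preliminarily, since $K$ is bounded by $M_K$ and $\Lambda$ by $M_\Lambda$, one reads directly from the definition in \eqref{eq:tildeYu} that $0 \leq \tilde{u}_t(y) \leq M_K e^{T M_\Lambda}$ uniformly in $t$ and $y$. Combined with Assumption \ref{ass:main2} (in particular the Lipschitz property of $\Phi$, $g$ and the boundedness of $\Phi(\cdot,0,0)$, $g(\cdot,0,0)$) and the second moment assumption on $\zeta_0$, the usual Gronwall argument yields a bound $\sup_{t \le T} \E[|\tilde{\xi}^{i,N}_t|^2] \leq C$, which is what we need to make sense of the subsequent estimates.

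For \eqref{eq:vtilde}, I would write $\tilde{\xi}^{i,N}_t - \tilde{\xi}^{i,N}_{r(t)}$ as the sum of an It\^o integral and a Lebesgue integral over $[r(t),t]$, use the Burkholder--Davis--Gundy inequality on the former, Cauchy--Schwarz on the latter, and bound the integrand by $m_\Phi + L_\Phi(|\tilde{\xi}^{i,N}_{r(s)}| + M_K e^{TM_\Lambda})$ (and similarly for $g$); the moment bounds on $\tilde{\xi}^{i,N}$ then give the desired $C \delta t$.

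For \eqref{E82}, I would decompose, for each $j$,
$$
K(y-\tilde{\xi}^{j,N}_t)e^{A_j(t)}-K(y-\tilde{\xi}^{j,N}_{r(t)})e^{A_j(r(t))},
$$
where $A_j(s)=\int_0^s\Lambda(r(u),\tilde{\xi}^{j,N}_{r(u)},\tilde{u}_{r(u)}(\tilde{\xi}^{j,N}_{r(u)}))du$, into a $K$-difference part and an exp-difference part. The first is bounded by $L_K|\tilde{\xi}^{j,N}_t-\tilde{\xi}^{j,N}_{r(t)}| e^{TM_\Lambda}$ uniformly in $y$; the second is controlled by the elementary inequality \eqref{EMajor1} and gives $M_K e^{TM_\Lambda} M_\Lambda \delta t$. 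Summing over $j$, taking the $\sup_y$ inside, and applying Jensen to pass the square inside the empirical mean, \eqref{E82} follows by inserting \eqref{eq:vtilde}.

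The main obstacle is \eqref{E83}, since $u^{S^N(\tilde{\xi})}$ is only defined implicitly through the fixed-point equation \eqref{Eum} and this requires closing an inequality by Gronwall. I would first bound $\tilde{u}_t - u^{S^N(\tilde{\xi})}_t$: both share the prefactor $\frac{1}{N}\sum_j K(y-\tilde{\xi}^{j,N}_t)$, so the difference reduces via \eqref{EMajor1} to the $L^1$-in-time gap of the exponents, and the Hölder-in-time plus Lipschitz-in-space control of $\Lambda$ from Assumption~\ref{ass:main2}.2 gives
$$
\bigl|\Lambda(r(s),\tilde{\xi}^{j,N}_{r(s)},\tilde{u}_{r(s)}(\tilde{\xi}^{j,N}_{r(s)}))-\Lambda(s,\tilde{\xi}^{j,N}_s,u^{S^N(\tilde{\xi})}_s(\tilde{\xi}^{j,N}_s))\bigr| \le L_\Lambda\bigl(\sqrt{\delta t}+|\tilde{\xi}^{j,N}_{r(s)}-\tilde{\xi}^{j,N}_s|+\Delta_s^j\bigr),
$$
where $\Delta_s^j := |\tilde{u}_{r(s)}(\tilde{\xi}^{j,N}_{r(s)})-u^{S^N(\tilde{\xi})}_s(\tilde{\xi}^{j,N}_s)|$. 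Inserting the intermediate term $\tilde{u}_s(\tilde{\xi}^{j,N}_s)$ and using that $\tilde{u}_s$ is $L_K e^{TM_\Lambda}$-Lipschitz in space (since $K$ is), one bounds $\Delta_s^j \le \|\tilde{u}_{r(s)}-\tilde{u}_s\|_\infty + L_K e^{TM_\Lambda}|\tilde{\xi}^{j,N}_{r(s)}-\tilde{\xi}^{j,N}_s| + \|\tilde{u}_s-u^{S^N(\tilde{\xi})}_s\|_\infty$. Squaring, using Jensen to absorb the sum over $j$, taking expectation, and applying the already-proved estimates \eqref{eq:vtilde} and \eqref{E82} to the terms that do not involve $u^{S^N(\tilde\xi)}$, one obtains
$$
\E[\|\tilde{u}_t - u^{S^N(\tilde{\xi})}_t\|_\infty^2] \le C\,\delta t + C\int_0^t \E[\|\tilde{u}_s - u^{S^N(\tilde{\xi})}_s\|_\infty^2]\,ds,
$$
so Gronwall gives the bound by $C\delta t$. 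Finally, \eqref{E83} is deduced by the triangle inequality $\|\tilde{u}_{r(t)} - u^{S^N(\tilde{\xi})}_t\|_\infty \le \|\tilde{u}_{r(t)}-\tilde{u}_t\|_\infty + \|\tilde{u}_t - u^{S^N(\tilde{\xi})}_t\|_\infty$, invoking \eqref{E82} on the first summand.
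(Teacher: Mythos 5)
Your proof is correct and follows essentially the same route as the paper's: the same decomposition and use of \eqref{EMajor1} for \eqref{eq:vtilde} and \eqref{E82}, and for \eqref{E83} a comparison of the exponential weights (exactly the content of the paper's Lemma \ref{lem:ViVi'}) closed by Gronwall's lemma. The only cosmetic differences are that you use the elementary space-Lipschitz property of $\tilde u_s$ where the paper invokes \eqref{eq:uu'} of Proposition \ref{lem:uu'} for $u^{S^N(\tilde \xi)}$, and that you run Gronwall on $\E[\Vert \tilde u_t - u^{S^N(\tilde \xi)}_t\Vert_\infty^2]$ rather than on $\E[\Vert \tilde u_{r(t)} - u^{S^N(\tilde \xi)}_t\Vert_\infty^2]$, recovering \eqref{E83} by the triangle inequality together with \eqref{E82}.
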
 

\begin{proof}[Proof of Proposition~\ref{prop:DiscretTime}.]
All along this proof,  $C$ will denote a positive constant that only  depends on \\
 $T ,M_K,m_{\Phi}, m_g, L_K,L_\Phi, L_g$ and $M_{\Lambda}$,$L_{\Lambda}$ 
and that can change from line to line. Let us fix $t \in [0,T]$.
\begin{itemize}
\item We begin by considering inequality~\eqref{eq:tildeYMajor}.
We first fix $1 \le i \le N$. 
By    \eqref{E82} and \eqref{E83} in Lemma~\ref{lem:Discrete} and
% \eqref{eq:uu'2} 
\eqref{eq:uu'}
in Proposition \ref{lem:uu'}, we obtain
\begin{eqnarray}
\label{eq:tildevMajor}
\E[\Vert \tilde{u}_t-u^{S^N(\xi)}_t\Vert_\infty^2]
&\leq &
\E \Big[ \Big( \Vert \tilde{u}_t-\tilde{u}_{r(t)} \Vert_{\infty} + \Vert \tilde{u}_{r(t)}- u^{S^N(\tilde{\xi})}_t \Vert_{\infty} + \Vert u^{S^N(\tilde{\xi})}_t - u^{S^N(\xi)}_t \Vert_{\infty} \Big)^2 \Big] \nonumber \\
%2\E[\Vert \tilde{u}_t-u_t^{S^N( \tilde \xi)}\Vert_{\infty}^2] + 2\E[\Vert u_t^{S^N(\tilde \xi)}-u_t^{S^N(\xi)}\Vert_\infty^2]\nonumber \\
&\leq & 3(\E[\Vert \tilde{u}_t-\tilde{u}_{r(t)} \Vert_{\infty}^2] + \E[\Vert \tilde{u}_{r(t)}-u_t^{S^N( \tilde \xi)}\Vert_{\infty}^2] + \E[\Vert u_t^{S^N(\tilde \xi)}-u_t^{S^N(\xi)}\Vert_\infty^2])\nonumber \\
%4(\E[\Vert \tilde{u}_t-\tilde{u}_{r(t)} \Vert_{\infty}^2] + \E[\Vert \tilde{u}_{r(t)}-u_t^{S^N( \tilde \xi)}\Vert_{\infty}^2] ) + 2\E[\Vert u_t^{S^N(\tilde \xi)}-u_t^{S^N(\xi)}\Vert_\infty^2]\nonumber \\
& \leq & C\delta t +C\E [\vert W_t\big (S^N(\tilde \xi),S^N(\xi)\big )\vert ^2]\nonumber \\
&\leq & 
C\delta t +C\sup_{i=1,\cdots, N} \E[\sup_{s\leq t} \vert \tilde \xi^{i,N}_s-\xi^{i,N}_s\vert ^2]\ ,
\end{eqnarray}
where  the function $u^{S^N(\tilde{\xi})}$ makes sense  since $ \tilde{\xi}$ has almost surely continuous trajectories
 and so $S^N(\tilde{\xi})$ is a random probability  
%by Remark \ref{RDelta} with values 
in $\shp(\shc^d)$. \\
Besides, by the second assertion of Lemma~~\ref{lem:yy'},   
setting $Y' := \tilde{\xi}^{i,N}$, $r_1(s) = r(s)$ and $Y := \xi^{i,N}$, $r_2(s) = s$,
we get
\begin{eqnarray}
\label{E310}
\E[\sup_{s\leq t} \vert \tilde \xi^{i,N}_s-\xi^{i,N}_s\vert ^2]
&\leq & C \E\left [ \int_0^t \vert \tilde{u}_{r(s)}(\tilde \xi^{i,N}_{r(s)})-u_s^{S^N(\xi)}(\xi^{i,N}_s)\vert^2\,ds\right ] + C \int_0^t \E \left[ \vert \tilde{\xi}^{i,N}_{r(s)} - \tilde{\xi}^{i,N}_{s} \vert^2 \right] ds + C \delta t \ . \nonumber \\ 
\end{eqnarray}
Concerning the first term in the r.h.s. of \eqref{E310}, we have for all $s \in [0,T]$
\begin{eqnarray}
\label{E311}
\vert \tilde{u}_{r(s)}(\tilde \xi^{i,N}_{r(s)})-u_s^{S^N(\xi)}(\xi^{i,N}_s)\vert^2 & \leq & 2 \vert \tilde{u}_{r(s)}(\tilde \xi^{i,N}_{r(s)})-u_s^{S^N(\xi)}(\tilde{\xi}^{i,N}_{r(s)})\vert^2 + 2 \vert u_s^{S^N(\xi)}(\tilde \xi^{i,N}_{r(s)})-u_s^{S^N(\xi)}(\xi^{i,N}_s)\vert^2 \nonumber \\
& \leq & 2 \Vert \tilde{u}_{r(s)} - u^{S^N(\xi)}_s \Vert_{\infty}^2 + 2 C \vert \tilde{\xi}^{i,N}_{r(s)} - \xi^{i,N}_s \vert^2 \ ,
\end{eqnarray}
where the second inequality above follows by \eqref{eq:uu'} 
in  Proposition \ref{lem:uu'},  setting $m = m' = S^N(\xi)$.
 %(Lipschitz property of the function $u^{S^N(\xi)}$).
 Consequently, by \eqref{E310}
\begin{eqnarray}
\E[\sup_{s\leq t} \vert \tilde \xi^{i,N}_s-\xi^{i,N}_s\vert ^2]
&\leq & 
C \left \{ \E\left [ \int_0^t \Vert \tilde{u}_{r(s)}-u_s^{S^N(\xi)}\Vert^2_{\infty} \,ds \right ]  + \int_0^t \E\left [ \vert \tilde{\xi}^{i,N}_{r(s)}-\xi_{s}^{i,N} \vert^2 \right] \,ds  + \delta t \right \} \nonumber \\  
& \leq & C \left \{ \E\left [ \int_0^t \Vert \tilde{u}_{r(s)}- \tilde{u}_s \Vert^2_{\infty}\,ds\right ] + \E\left [ \int_0^t \Vert \tilde{u}_s-u_s^{S^N(\xi)}\Vert^2_{\infty}\,ds\right ]  \right . \nonumber \\ 
& & \left . + \E\left [\int_0^t\vert \tilde \xi^{i,N}_{r(s)}- \tilde \xi^{i,N}_s\vert ^2\,ds\right ] + \E\left [\int_0^t\vert \tilde \xi^{i,N}_{s}- \xi^{i,N}_s\vert ^2\,ds\right ] + \delta t \right \} \ .
\end{eqnarray}
Using  inequalities \eqref{eq:vtilde} and \eqref{E82} in Lemma~\ref{lem:Discrete}, for all $t \in [0,T]$,
we obtain
\begin{equation}
\label{eq:xitildeMajor}
\sup_{i=1,\cdots, N}\E[\sup_{s\leq t} \vert \tilde \xi^{i,N}_s-\xi^{i,N}_s\vert ^2]
\leq C\delta t +C \int_0^t \left [\E[\Vert \tilde{u}_{s}-u_s^{S^N(\xi)}\Vert^2_{\infty}]+ \sup_{i=1,\cdots, N}\E[\sup_{\theta \leq s} \vert \tilde \xi^{i,N}_\theta-\xi^{i,N}_\theta\vert ^2]\right ]\,ds.
\end{equation}
Gathering the latter inequality together with~\eqref{eq:tildevMajor} yields 
\begin{eqnarray}
\E[\Vert \tilde{u}_t-u^{S^N(\xi)}_t\Vert_\infty^2] + \sup_{i=1,\cdots, N}\E[\sup_{s\leq t} \vert \tilde \xi^{i,N}_s-\xi^{i,N}_s\vert ^2] & \leq & C\delta t + 2C\sup_{i=1,\cdots, N} \E[\sup_{s\leq t} \vert \tilde \xi^{i,N}_s-\xi^{i,N}_s\vert ^2] \nonumber \\
& \leq & C\delta t \nonumber \\ 
& & + \; C \int_0^t \Big [\E[\Vert \tilde{u}_{s}-u_s^{S^N(\xi)}\Vert^2_{\infty}] \nonumber \\
&& + \sup_{i=1,\cdots, N}\E[\sup_{\theta \leq s} \vert \tilde \xi^{i,N}_\theta-\xi^{i,N}_\theta\vert ^2]\Big ] ds \ .
\end{eqnarray}
Applying Gronwall's lemma to the function 
$$
t\mapsto \sup_{i=1,\cdots, N}\E[\sup_{s\leq t} \vert \tilde \xi^{i,N}_s-\xi^{i,N}_s\vert ^2]+\E[\Vert \tilde{u}_{t}-u_t^{S^N(\xi)}\Vert^2_{\infty}]
$$ 
ends the proof of  \eqref{eq:tildeYMajor}. 

\item We  focus now on~\eqref{eq:tildeYMajorBis}.  First we observe that 
\begin{equation}
\label{eq:vTildeL2}
\E [\Vert \tilde{u}_t-u^{S^N(\xi)}_t\Vert_2^2]\leq 2\E [\Vert \tilde{u}_t-u_t^{S^N(\tilde \xi)}\Vert_2^2]+2\E [\Vert u_t^{S^N(\tilde \xi)}-u_t^{S^N(\xi)}\Vert_2^2]\ .
\end{equation}
Using successively item 4. of Proposition~\ref{lem:uu'}, Remark \ref{RADelta} and 
inequality~\eqref{eq:tildeYMajor},
we can bound the second term on the r.h.s. of \eqref{eq:vTildeL2}  as follows:
\begin{eqnarray}
\label{E815}
\E [\Vert u_t^{S^N(\tilde \xi)}-u_t^{S^N(\xi)}\Vert_2^2]
&\leq &
C\E [\vert W_t\big (S^N(\tilde \xi),S^N(\xi)\big )\vert^2] \nonumber \\
&\leq &
C\sup_{i=1,\cdots, N}\E[\sup_{s\leq t}\vert \tilde \xi^{i,N}_s-\xi^{i,N}_s\vert^2] \nonumber \\
&\leq &
C\delta t\ .
\end{eqnarray} 
 
To simplify the notations, we introduce the real valued random variables 
\begin{equation}
\label{eq:VVtildeDef}
V_t^i := e^{\int_0^t\Lambda\big (s,\tilde \xi^{i,N}_{s},u^{S^N(\tilde{\xi})}_{s}(\tilde \xi^{i,N}_{s})\big )ds}\quad \textrm{and}\quad \tilde V_t^i := e^{\int_0^t\Lambda\big (r(s),\tilde \xi^{i,N}_{r(s)},\tilde{u}_{r(s)}(\tilde \xi^{i,N}_{r(s)})\big )ds}\ ,
\end{equation}
defined for any $i=1,\cdots N$ and $t\in [0,T]$. \\
Concerning the first term on the r.h.s. of~\eqref{eq:vTildeL2}, inequality \eqref{E942} of Lemma \ref{lem:ViVi'} 
in the Appendix gives for all $y \in \R^d$
\begin{eqnarray}
\label{E816}
\vert \tilde{u}_{t}(y)-u_t^{S^N(\tilde \xi)}(y)\vert^2 \leq  \frac{M_K}{N}\sum_{i=1}^N K(y-\tilde \xi^{i,N}_t)  \vert \tilde V_t^i-V_t^i \vert^2 \ .
\end{eqnarray}
Integrating the inequality \eqref{E816} with respect to $y$,  yields
$$
\Vert \tilde{u}_t - u_t^{S^N(\tilde \xi)} \Vert_2^2 = \int_{\R^d} \vert \tilde{u}_{t}(y)-u_t^{S^N(\tilde \xi)}(y)\vert^2\,dy
\leq 
\frac{M_K}{N}\sum_{i=1}^N   \vert \tilde V_t^i
-
V_t^i \vert^2\ ,
$$
which, in turn, implies
\begin{eqnarray}
\label{E817}
\E \left[ \Vert \tilde{u}_t - u^{S^N(\tilde{\xi})}_t \Vert_2^2 \right] \leq \frac{M_K}{N}\sum_{i=1}^N   \E \left[ \vert \tilde V_t^i - V_t^i \vert^2 \right] \ .
\end{eqnarray}
Using successively item $1.$ of Lemma \ref{lem:ViVi'} and inequality \eqref{eq:vtilde} of~Lemma~\ref{lem:Discrete},
 for all $i \in \{1,\cdots,N\}$, we obtain
\begin{eqnarray}
\label{E818}
 \E[\vert \tilde V_t^i-V_t^i \vert^2]
 & \leq &  C\delta t + C\E \left[ \int_0^t \vert \tilde{\xi}^{i,N}_{r(s)} - \tilde{\xi}^{i,N}_{s}  \vert^2   \ ds \right] + C \E \left[ \int_0^t \vert \tilde{u}_{r(s)}(\tilde \xi^{i,N}_{r(s)})-u^{S^N(\tilde \xi)}_s(\tilde \xi^{i,N}_s)\vert^2 ds \right] \nonumber \\
&\leq & C\delta t + C \E \left[ \int_0^t \vert \tilde{u}_{r(s)}(\tilde \xi^{i,N}_{r(s)})-u^{S^N(\tilde \xi)}_s(\tilde \xi^{i,N}_s)\vert^2 ds \right] \nonumber\\
& \leq & C\delta t + C \E \left[ \int_0^t \vert \tilde{u}_{r(s)}(\tilde \xi^{i,N}_{r(s)})-u^{S^N(\tilde \xi)}_s(\tilde \xi^{i,N}_{r(s)})\vert^2 ds \right] \nonumber \\
&& + \; C \E \left[ \int_0^t \vert u_{s}^{S^N(\tilde{\xi})}(\tilde \xi^{i,N}_{r(s)})-u^{S^N(\tilde \xi)}_s(\tilde \xi^{i,N}_{s})\vert^2 ds \right] \nonumber \\
&\leq & C\delta t +C\int_0^t \left [ \E [\Vert \tilde{u}_{r(s)}-u^{S^N(\tilde \xi)}_s\Vert_\infty^2] + \E[\vert \tilde \xi^{i,N}_{r(s)}-\tilde \xi^{i,N}_s\vert^2] \right ]\, ds \nonumber\\
&\leq &
C\delta t +C\int_0^t \E [\Vert \tilde{u}_{r(s)}-u^{S^N(\tilde \xi)}_s\Vert_\infty^2] \, ds\ ,
\end{eqnarray}
where the fourth inequality above follows from  Proposition \ref{lem:uu'}, see \eqref{eq:uu'}. Consequently using~\eqref{E818} and inequality \eqref{E83} of Lemma~\ref{lem:Discrete},  \eqref{E817} becomes
\begin{eqnarray}
\label{EFinal_Discret}
\E [\Vert \tilde{u}_t-u_t^{S^N(\tilde \xi)}\Vert_2^2]
\leq \frac{C}{N}\sum_{i=1}^N \E [\vert \tilde V_t^i - V_t^i \vert^2] \underbrace{\leq}_{\eqref{E818}} C\delta t +C\int_0^t \E [\Vert \tilde{u}_{r(s)}-u^{S^N(\tilde \xi)}_s\Vert_\infty^2] \underbrace{\leq}_{\eqref{E83}}  C\delta t.
\end{eqnarray}
Finally, injecting \eqref{EFinal_Discret} and \eqref{E815} in \eqref{eq:vTildeL2} yields
$$
\E [\Vert \tilde{u}_t-u^{S^N(\xi)}_t\Vert_2^2] \leq C \delta t \ ,
$$
\end{itemize}
which ends the proof of Proposition \ref{prop:DiscretTime}.
\end{proof}

\subsection{Algorithm description}
In this section, we describe precisely the algorithm relying on the time-discretization \eqref{eq:tildeYu} of the interacting particle system \eqref{eq:XIi}. Let $v_0$ be the law density of $Y_0$ where $Y$ is the solution of \eqref{eq:NSDE}. In the sequel, we will make use of the same notations as in previous section. In particular, $0=t_0\leq \cdots\leq t_k=k\delta t\leq \cdots \leq t_n=T$ is a regular time grid with $\delta t=T/n$. %For fixed $h > 0$,
 We consider a real-valued function $K : \R^d \rightarrow \R$ being a   mollifier  depending on some
 bandwith parameter $\varepsilon$. \\
%where for all $x \in \R^d$, $K^h(x) = \frac{1}{h^d} \phi^d(\frac{x}{h})$.
%We also recall that the time-discretized interacting particle system \eqref{eq:tildeYu} is denoted by $(\tilde{\xi}^{i,N})_{i=1,\cdots,N}$ and the continuous system \eqref{eq:XIi} is denoted by $(\xi^{i,N})_{i=1,\cdots,N}$.
\begin{description}
	\item[Initialization] 
	for $k=0$.
	\begin{enumerate} 
		\item Generate $(\tilde{\xi}_{t_0}^{i,N})_{i=1,..,N}$  i.i.d.$\sim$ $v_0(x)dx$;
		\item set $G_0^i := 1$, $i = 1,\cdots,N$;
		\item set $\tilde{u}_{t_0}(\cdot) := (K \ast v_0)(\cdot)$;   
	\end{enumerate}
	\item[Iterations]  
	for k = 0, ..., n-1.
		\begin{itemize}
			\item Independently for each particle  $ { \tilde{\xi}^{j,N}_{t_k}}$ for $j=1,\cdots N$, 
			$$\tilde \xi^{j,N}_{t_{k+1}}= {\tilde{\xi}^{j,N}_{t_k}} + \Phi(t_{k},{\tilde{\xi}^{j,N}_{t_k}},{\tilde{u}}_{t_{k}}({\tilde{\xi}^{j,N}_{t_k}})) \sqrt{\delta t} \epsilon^j_{k+1} + g(t_{k},{\tilde{\xi}^{j,N}_{t_k}},{\tilde{u}}_{t_{k}}({\tilde{\xi}^{j,N}_{t_k}})) \delta t \; ,$$
			where $(\epsilon^j_{k})_{j=1, \cdots, N, k=1,\cdots n}$ is a sequence of i.i.d centered and standard Gaussian variables; 
		\end{itemize}
 
		\begin{itemize}
			\item set for $j=1,\cdots N$, $$G_{k+1}^j := G_{k}^j \times \exp \left( \Lambda(t_{k},{ \tilde{\xi}^{j,N}_{k}},{\tilde{u}}_{t_k}({ \tilde{\xi}^{j,N}_{t_k}})) \delta t  \right);$$
			\item set $${\tilde{u}}_{t_{k+1}}(\cdot) = {\displaystyle \frac{1}{N} \sum_{j=1}^{N}  G_{k+1}^j \times K(\cdot - { \tilde{\xi}^{j,N}_{t_{k+1}}})}. $$        
  		\end{itemize}
\end{description}
\begin{rem}
For a fixed $k \in \{0,\cdots,n-1 \}$, we observe that the simulation of the $j$-th particle $\tilde{\xi}^{j,N}_{t_{k+1}}$ at time $t_{k+1}$ involves the whole particle system  through the evaluation of $\tilde{u}_{t_k}(\tilde \xi_{t_k}^{j,N})$,
which implies a complexity of the algorithm of order $n N^2$.
\end{rem}

\section{Numerical results}
\label{SNum}

\subsection{Preliminary considerations}

One motivating issue of this section is how the interacting particle system $\xi := (\xi^{i, N,\varepsilon})$ defined in \eqref{eq:XIi} with
 $K = K^{\varepsilon}$, $K^{\varepsilon}(x) := \frac{1}{\varepsilon^d}\phi^d(\frac{x}{\varepsilon})$ for some mollifier $\phi^d$, can be used to approach 
the solution $v$ of the PDE 
\begin{equation}
\label{pdeLim}
\left \{
\begin{array}{l}
\partial_t v = \frac{1}{2} \displaystyle{\sum_{i,j=1}^d} \partial_{ij}^2 \left( (\Phi \Phi^t)_{i,j}(t,x, v) v \right) - div \left( g(t,x, v) v \right) +\Lambda(t,x,v)  v \\
 v(0,x) = v_0 \ , %\mathcal{L}(Y_0) \ .
\end{array}
\right .
\end{equation}
to which we can reasonably expect that \eqref{epide} converges when $K^{\varepsilon} \xrightarrow[\varepsilon \rightarrow 0]{} \delta$. \\
Two significant parameters, i.e.  $\varepsilon \rightarrow 0$, $N \rightarrow + \infty$, intervene.
We expect to approximate $v$ by $u^{\varepsilon,N}$, which is the solution of the linking equation \eqref{NSDE3}, associated with the empirical measure $m = S^N(\xi)$. To this purpose, we want to control empirically the 
 Mean Integrated Squared Error (MISE) between the solution $v$ of \eqref{pdeLim} and the particle approximation $u^{\varepsilon,N}$, i.e. for $t \in [0,T]$,
\begin{eqnarray} 
\label{eq:MISE}
\E[ \Vert u_t^{\varepsilon,N} -  v_t \Vert_2^2] \leq 2 \E[ \Vert u_t^{\varepsilon,N} -  u_t^{\varepsilon} \Vert_2^2] + 
2  \Vert u_t^{\varepsilon} -  v_t \Vert_2^2,
\end{eqnarray}
where $u^{\varepsilon} = u^{m^0}$ with $K = K^{\varepsilon}$, $m^0$ being the common law of processes $Y^i, \; 1 \leq i \leq N$ in \eqref{eq:Yi}. Even though the second term in the r.h.s. of \eqref{eq:MISE} does not explicitely involve the number of particles $N$, the first term crucially depends on both parameters $\varepsilon, N$. The behavior of the first term relies on the propagation of chaos. This phenomenon has been observed in Corollary \ref{CCP}, which is a consequence of Theorem \ref{TPC}, for a fixed $\varepsilon > 0$, when $N \rightarrow + \infty$.
 According to Theorem \ref{TPC}, the first error term on the r.h.s. of the above inequality can be bounded by $\frac{C(\varepsilon)}{N}$. \\
Concerning the second error term, no result is available but we expect that it converges to zero when $\varepsilon \rightarrow 0$. To control the MISE, it remains to determine a relation $N \mapsto \varepsilon(N)$ such that
\begin{equation}
\label{eq:tradeoff}
\varepsilon(N) \xrightarrow[N \rightarrow + \infty]{} 0 \quad \textrm{ and } \quad \frac{C(\varepsilon(N))}{N} \xrightarrow[N \rightarrow + \infty]{} 0 \ .
\end{equation}
When the coefficients $\Phi$, $g$ and the initial condition are smooth with $\Phi$ non-degenerate and $\Lambda \equiv 0$ (i.e. in the conservative case), Theorem 2.7 of \cite{JourMeleard} gives a description of such a relation. \\
In our empirical analysis, we have concentrated on a test case, for which we have an explicit solution.
\\
 We first illustrate the chaos propagation for fixed $\varepsilon > 0$, i.e. the result of Theorem \ref{TPC}. On the other hand, we give an empirical insight concerning the following:
\begin{itemize}
\item the asymptotic behavior of the second error term in inequality \eqref{eq:MISE} for $\varepsilon \rightarrow 0$;
\item the tradeoff $N \mapsto \varepsilon(N)$ verifying \eqref{eq:tradeoff}.
\end{itemize}
Moreover, the simulations reveal two behaviors regarding the chaos propagation intensity.

\subsection{The target PDE}
%%%%%%%%%%%%%%%%%%%%%%%%%%%%%%%%%%%

We describe now the test case.
For a given triple $(m,\mu , A)\in ]1,\infty[\times \R^d\times \R^{d \times d}$
%\mathcal{M}_d(\R)$, 
we consider the following nonlinear PDE of the form~\eqref{pdeLim}:
\begin{equation}
\label{eq:pdev}
\left \{
\begin{array}{lll}
\partial_t v&=&{\displaystyle \frac{1}{2}\sum_{i,j=1}^d \partial_{i,j}^2 \big (v(\Phi \Phi^t)_{i,j}(t,x,v)\big )}-div\big (vg(t,x,v)\big )+v\Lambda(t,x,v) \ ,\\
v(0,x)&=&B_m(2,x)f_{\mu,A}(x)\quad\textrm{for all}\ x\in \R^d \ ,
\end{array}
\right .
\end{equation}
where the functions $\Phi\,,\,g\,,\, \Lambda$ defined on $[0,T]\times \R^d\times \R$ are such that 
\begin{equation}
\label{eq:Phi}
\Phi(t,x,z)=f_{\mu,A}^{\frac{1-m}{2}}(x)z^{\frac{m-1}{2}}I_d\ ,
\end{equation} 
$I_d$ denoting the identity matrix in $\R ^{d \times d}$, 
\begin{equation}
\label{eq:gL}
g(t,x,z)=f_{\mu,A}^{1-m}(x)z^{m-1}\frac{A+A^t}{2}(x-\mu )\ ,
\quad
\textrm{and} 
\quad
\Lambda(t,x,z)=f_{\mu,A}^{1-m}(x)z^{m-1}Tr\left(\frac{A+A^t}{2}\right).
\end{equation}
%with $A_{sym}$ denoting the symmetric matrix $A_{sym}=\frac{A+A^t}{2}$ 
Here $f_{\mu,A}:\R^d \rightarrow \R $ is given by 
\begin{equation}
\label{eq:f}
f_{\mu ,A}(x)= C e^{-\frac{1}{2}\langle x-\mu,A(x-\mu)\rangle}\ ,
\quad\textrm{normalized by}\quad  
{\displaystyle C=\left [ \int_{\R^d} B_m(2,x)e^{-\frac{1}{2} (x-\mu) \cdot A(x-\mu)}\right ]^{-1}}
\end{equation}
 and    $B_m$ is the $d$-dimensional Barenblatt-Pattle density associated to $m>1$, i.e. 
\begin{equation}
\label{eq:Barenblatt} 
B_m(t,x)= \frac{1}{2}(D -\kappa  t^{-2\beta }\vert x\vert)_+^{\frac{1}{m-1}}t^{-\alpha},
\end{equation}
with
$\alpha =\frac{d}{(m-1)d+2}\ ,$   $\beta =\frac{\alpha }{d} \ ,$  $\kappa =\frac{m-1}{m}\beta$  and 
 $D =[2\kappa ^{-\frac{d}{2}}\frac{\pi^{\frac{d}{2}}\Gamma (\frac{m}{m-1})}{\Gamma (\frac{d}{2}+\frac{m}{m-1})} ]^{\frac{2(1-m)}{2+d(m-1)}}\ .
$

In the specific case where $A$ is the zero matrix of $\R^{d \times d}$, then $f_{\mu,A}\equiv 1$; $g\equiv 0$ and $\Lambda \equiv 0$. Hence, we recover the conservative porous media equation, whose explicit solution is
$$
v(t,x)=B_m(t+2,x)\ ,\quad \textrm{for all} \ (t,x)\in [0,T]\times \R^d,
$$
see~\cite{Barenb}.
For general values of $A\in \R^{d \times d}$, extended calculations produce
 the following explicit solution
\begin{equation}
\label{eq:sol}
v(0,\cdot) = v_0(\cdot) \quad \textrm{and} \quad v(t,x)=B_m(t+2,x)f_{\mu,A}(x)\ ,\quad \textrm{for all} \ (t,x)\in [0,T]\times \R^d \; ,
\end{equation}
 of~\eqref{eq:pdev}, which is non conservative. 

\subsection{Details of  the implementation}
%%%%%%%%%%%%%%%%%%%%%%%%%%%%%%%%%%%%%%%%%

Once fixed the number $N$ of particles, we have run $M=100$ i.i.d. particle systems producing $(u^{\varepsilon,N,i})_{i=1,\cdots M}$, which are $M$ i.i.d. realizations
 of $u^{\varepsilon,N}$ introduced just after \eqref{pdeLim}. %estimates $(u^{\varepsilon,N,i})_{i=1,\cdots M}$.
The MISE is then approximated by the  Monte Carlo approximation 
\begin{equation}
\label{eq:MISEApprox}
\E[ \Vert u_t^{\varepsilon,N}-v_t\Vert_2^2 ] \approx \frac{1}{MQ}\sum_{i=1}^M \sum_{j=1}^Q \vert u_t^{\varepsilon,N,i}(X^j)-v_t(X^j)\vert ^2 v^{-1}(0,X^j)\ ,\quad\textrm{for all}\ t\in [0,T]\ ,
\end{equation}
where $(X^j)_{j=1,\cdots, Q=1000}$ are i.i.d $\R^d$-valued random variables with common density $v(0,\cdot)$. 
In our simulation, we have chosen $T=1$, $m=3/2$, $\mu=0$ and $A=\frac{2}{3} I_d$.
 $K^{\varepsilon}=\frac{1}{\varepsilon^d}\phi^d(\frac{\cdot}{\varepsilon})$ with $\phi^d$ being the standard and centered Gaussian density. 
% The initial condition $v(0,\cdot)$ is perfectly simulated using a rejection algorithm with a Gaussian instrumental distribution. 

% \subsection{Simulations analysis}
% %%%%%%%%%%%%%%%%%%%%%%%%%%%%%%%%%%%%

% In this section we will analyze separately the error
% induced by, first the regularization via the kernel $K^\varepsilon$
%   combined with the  particle approximation, secondly by the time
% discretization.

% \subsubsection{Regularization and particle approximation error}

% \label{Simul}
% %For fixed $\varepsilon$ we report on Figure, the MISE~\eqref{eq:MISEApprox} as a function of the number of particles $N$ for different values of the dimension parameter, $d$. 
% %We observe, as expected that the error decreases at a rate of order $N^{-1/2}$ independently of $d$. 

In this subsection, we fix the dimension to $d = 5$. 
We have run a discretized version of the interacting particle system with Euler scheme mesh  $kT/n$ with $n=10$. Notice that this discretization error is neglected in the present analysis. 

Our simulations show that the approximation error presents two types of behavior depending on the number $N$ of particles  
with respect  to the regularization parameter $\varepsilon$.
\begin{enumerate}
\item For large values of $N$, we visualize  a \textit{chaos propagation behavior} for which the  error estimates 
are similar to the ones
provided by  the  density estimation theory~\cite{silverman1986}
%\cite{SilvBook}
  corresponding to the classical framework of independent samples.
\item For small values of $N$ appears  a \textit{transient behavior} for which the bias and variance errors
cannot be  easily described.
\end{enumerate} 

Observe that the  Mean Integrated Squared Error ${\rm MISE}_t(\varepsilon,N):=\E[ \Vert u_t^{\varepsilon,N}-v_t\Vert_2^2 ]$ can be decomposed as the sum of the variance $ V_t(\varepsilon,N)$ and squared bias $B_t^2(\varepsilon,N)$ as follows:
\begin{eqnarray}
\label{eq:VarBias}
{\rm MISE}_t(\varepsilon,N)&=&V_t(\varepsilon,N)+B_t^2(\varepsilon,N) \nonumber \\
%\E \left[ \Vert u_t^{\varepsilon,N}-v_t\Vert_2^2 \right] 
&=& \E\left[ \Vert u_t^{\varepsilon,N}-\E [u_t^{\varepsilon,N}]\Vert_2^2 \right ]
+  \Vert\E[ u_t^{\varepsilon,N}]-v_t\Vert_2^2\ .
\end{eqnarray}
For $N$ large enough, according to Corollary \ref{CCP}, one expects that the propagation of chaos holds.
 Then the particle system $(\xi^{i,N})_{i=1,\cdots, N}$ 
(solution of \eqref{eq:XIi}) is close to an i.i.d. system with common law $m^0$.
% $v^\varepsilon$,the solution of the perturbed PDE~\eqref{eq:pdevEpsilon}.
%the common law of processes $(Y^i)_{i=1,\cdots,N}$ defined as in Theorem \ref{TPC} and satisfying \eqref{eq:xiYuFinalb2}.
 We observe that, in the specific case where the weighting function $\Lambda$ does not depend on the density $u$, 
 for $t \in [0,T]$, we have
 \begin{eqnarray}
\label{eq:BiasApprox}
\E[u_t^{\varepsilon , N}] & = & \frac{1}{N} \E \left[ \sum_{j=1}^N K^{\varepsilon}
( \cdot-\xi^{j,N}_{t})\exp\big \{\int_0^t \Lambda(r(s), \xi^{j,N}_{r(s)})\,ds\big \} \right] \ ,  \nonumber\\
%& \approx & \frac{1}{N} \E \left[ \sum_{j=1}^N  K^{\varepsilon}(y-\xi^{j,N}_t) V_t\big (\xi^{j,N},u^{S^N(\mathbf{\xi})}(\xi^{j,N})\big ) \right] \\
%& \approx & \frac{1}{N} \E \left[ \sum_{j=1}^N  K^{\varepsilon}(y- {Y}^{j}_t) V_t\big ({Y}^{j},u^{S^N(\mathbf{{Y}})}({Y}^{j})\big ) \right]  \\
& = & \E \left[ K^{\varepsilon}( \cdot-Y^{1}_t) V_t\big (Y^{1}\big ) \right] \nonumber \\
& = & u_t^{\varepsilon} \ . %=K^{\varepsilon}\ast v_t^\varepsilon \ .
\end{eqnarray}
Therefore, under the  chaos propagation behavior,
 the approximations below  hold for
 the variance and the squared bias: 
\begin{equation}
\label{eq:aproxVarBias}
V_t(\varepsilon,N)
%:=\E\left[ \Vert u_t^{\varepsilon,N}-\E [u_t^{\varepsilon,N}]\Vert_2^2 \right ]
\approx 
\E\left[ \Vert u_t^{\varepsilon,N}-u_t^{\varepsilon}\Vert_2^2 \right ]
\quad\textrm{and}\quad 
B_t^2(\varepsilon,N) %:=\E\left [ \Vert\E[ u_t^{\varepsilon,N}]-v_t\Vert_2^2 \right ]
\approx  \Vert u_t^{\varepsilon }-v_t\Vert_2^2 \ .
\end{equation}
 We recall that the relation $u^{\varepsilon} = K^{\varepsilon} \ast v^{\varepsilon}$ comes from Theorem 6.1 of \cite{LOR1}, where $v^{\varepsilon}$ is solution of \eqref{epide} with $K = K^{\varepsilon}$.

On Figure~\ref{fig:Variance5d}, we have reported the estimated variance error $V_t(\varepsilon,N)$  as a function of 
the particle number $N$, (on the left graph) and as a function of the regularization parameter $\varepsilon$, (on the right graph),
 for $t = T=1$ and $d=5$.
We have used for this a similar Monte Carlo approximation as \eqref{eq:MISEApprox}.
\\
That figure shows that, when the number of particles is \textit{large enough}, the variance error behaves precisely as in
 the classical case of density estimation encountered in~\cite{silverman1986}, i.e., vanishing at a rate $\frac{1}{N\varepsilon ^d}$, see relation (4.10), Chapter 4., Section 4.3.1.
 This is in particular illustrated by the log-log graphs, showing almost linear curve, when $N$ is sufficiently large. In particular
we observe the following. 
\begin{itemize}
\item On the left graph,  $\log(V_t(\varepsilon,N)) \approx a-\alpha\log N$ with slope $\alpha=1$;
\item On the right graph, $\log V_t(\varepsilon,N) \approx b-\beta \log \varepsilon $  with slope $\beta =5=d$. 
\end{itemize}
It seems that the threshold $N$ after which appears the linear behavior (compatible with the  propagation of chaos situation
corresponding to  asymptotic-i.i.d. particles) decreases
%above which such linear behavior (corresponding to the propagation of chaos situation (asymptotic-i.i.d. particles)) is observed decreases
 when $\varepsilon$ grows. In other words, when $\varepsilon$ is large, less particles $N$ 
are needed to give evidence to the chaotic behavior.

  This phenomenon can be probably explained by analyzing the particle system dynamics. 
%We recall that the relation $u^{\varepsilon} = K^{\varepsilon} \ast v^{\varepsilon}$ comes from Theorem 6.1 of \cite{LOR1}, where $v^{\epsilon}$ is solution of \eqref{epide} with $K = K^{\varepsilon}$.
  Indeed, at each time step,
  the interaction between the particles is due to  the empirical estimation of
 $u^\varepsilon = K^\varepsilon \ast v^\varepsilon $ based on the particle system.
 % at each point of the particles system. 
 Intuitively, the more accurate the approximation $u^{\varepsilon,N}$ of
$u^{\varepsilon}$
is, the less strong the interaction between particles will be.
In the limiting case when  $u^{\varepsilon,N}=u^{\varepsilon}$, the interaction 
disappears.

 Now observe that at time step $0$, the particle system $({\xi}^{i,N}_0)$ is 
i.i.d.  according to $v_0(\cdot)$, so that the estimation of 
$(K^\varepsilon \ast v^\varepsilon)(0,\cdot)$ provided by \eqref{eq:tildeYu} reduces to the classical 
density estimation approach, see \cite{silverman1986}
as mentioned above. 
 %The interaction between particles is then propagated by the succesive estimation of $K^\varepsilon \ast v^\varepsilon$.
  In that classical framework, we emphasize that, for larger values of
 $\varepsilon$, the number of particles, needed to achieve a given density
 estimation accuracy, is smaller. Hence, one can imagine that for larger  
values of $\varepsilon$ less particles will be needed to obtain a quasi-i.i.d 
particle system at time step $1$, $({\xi}^{i,N}_{t_1})$.
 We can then reasonably presume that this initial error propagates 
along the time steps. 

%FIN NEW
%Consequently, the estimation of $K^{\varepsilon}\ast v^\varepsilon$, with a given precision, requires less particles  for large
On Figure~\ref{fig:Bias5d}, we have reported the estimated squared bias error, $B^2_t(\varepsilon,N)$,  as a function of the  regularization parameter, $\varepsilon$, for different values of the particle number $N$, for $t = T=1$ and $d=5$.\\
One can observe that, similarly to the classical i.i.d. case,  (see relation (4.9) in Chapter 4., Section 4.3.1 in \cite{silverman1986}),
 for $N$ large enough, the bias error  does not depend on $N$ and 
 can be  approximated by $a \varepsilon^4$, for some constant $a>0$.
 This is in fact coherent with the bias approximation~\eqref{eq:aproxVarBias}, developed in the specific case where
 the weighting function $\Lambda$ does not depend on the density. 
Assuming the validity of approximation~\eqref{eq:aproxVarBias} and  of the previous empirical observation implies that one can bound the error between the solution, $v^\varepsilon$,
 of the regularized PDE of the form \eqref{epide} (with $K = K^{\varepsilon}$) associated to \eqref{eq:pdev}, and the solution, $v$, of the limit
(non regularized) PDE~\eqref{eq:pdev} as follows 
% \begin{eqnarray}
% \label{eq:vepsilonv}
% \E \Big[ \Vert v_t^\varepsilon-v_t\Vert_2^2 \Big]& \leq & 2 \E \Big[ \Vert v_t^\varepsilon-u_t^{\varepsilon}\Vert_2^2 \Big] + 2 \E \Big[ \Vert u_t^\varepsilon-v_t\Vert_2^2 \Big] \nonumber \\
%   &\leq & 2 \E \Big[ \Vert v_t^\varepsilon-K^\varepsilon\ast v_t^\varepsilon\Vert_2^2 \Big] + 2 \E \Big[ \Vert u_t^\varepsilon-v_t\Vert_2^2 \Big] \nonumber \\
% &\leq & 2(a'+a)\varepsilon^4.
% \end{eqnarray}
\begin{eqnarray}
\label{eq:vepsilonv}
 \Vert v_t^\varepsilon-v_t\Vert_2^2 & \leq & 2  \Vert v_t^\varepsilon-u_t^{\varepsilon}\Vert_2^2  + 2  \Vert u_t^\varepsilon-v_t\Vert_2^2  \nonumber \\
  &\leq & 2  \Vert v_t^\varepsilon-K^\varepsilon\ast v_t^\varepsilon\Vert_2^2 
 + 2  \Vert u_t^\varepsilon-v_t\Vert_2^2  \nonumber \\
&\leq & 2(a'+a)\varepsilon^4.
\end{eqnarray}
Indeed, at least, the first  term in  the second line can be easily bounded, supposing that  $v_t^\varepsilon$ has
a bounded second derivative.
This constitutes an empirical proof of the fact  that $v^\varepsilon$ converges to $v$.\\
As observed in the variance error graphs, the threshold $N$, above which the propagation of chaos behavior is observed decreases with $\varepsilon$. 
Indeed, for $\varepsilon>0.6$ we observe a chaotic behavior of the bias error, starting from $N\geq 500$, whereas for
 $\varepsilon\in [0.4,0.6]$, this chaotic behavior appears only for $N\geq 5000$. \\
For small values of $\varepsilon\leq 0.6$, the bias highly depends on $N$ for any $N\leq 10^4$; moreover that dependence
becomes less relevant when  $N$ increases.  This is probably due to the combination of two effects: 
the lack of chaos propagation phenomenon and the fact that the coefficient $\Lambda$ depends on $u$, so that
\eqref{eq:BiasApprox} does not hold in that context.
 
%Indeed one can notice that for a given value of $N$, the quadratic fit $a\varepsilon^4+b$ becomes to be valid for large enough $\varepsilon$. 

%Besides, for small values of $N$, one can observe that the bias error 

Taking into account both  the bias and the variance error in the MISE \eqref{eq:VarBias}, the choice of $\varepsilon$ has to
 be carefully optimized w.r.t. the number of particles:  $\varepsilon$ going to zero together with $N$ going to infinity at a 
judicious relative rate seem to ensure the convergence of the estimated MISE to zero. 
This kind of tradeoff is standard in density estimation theory and was already investigated theoretically in the context of
 forward interacting particle systems related to conservative regularized nonlinear PDE in
 \cite{JourMeleard}. Extending this type of theoretical analysis to our non conservative framework is beyond the scope
 of the present paper.
% and should be the subject of future works. 

\begin{figure}[!h]
\begin{center}
\subfigure[Variance as a function of $N$]
{\includegraphics[width=0.49\linewidth]{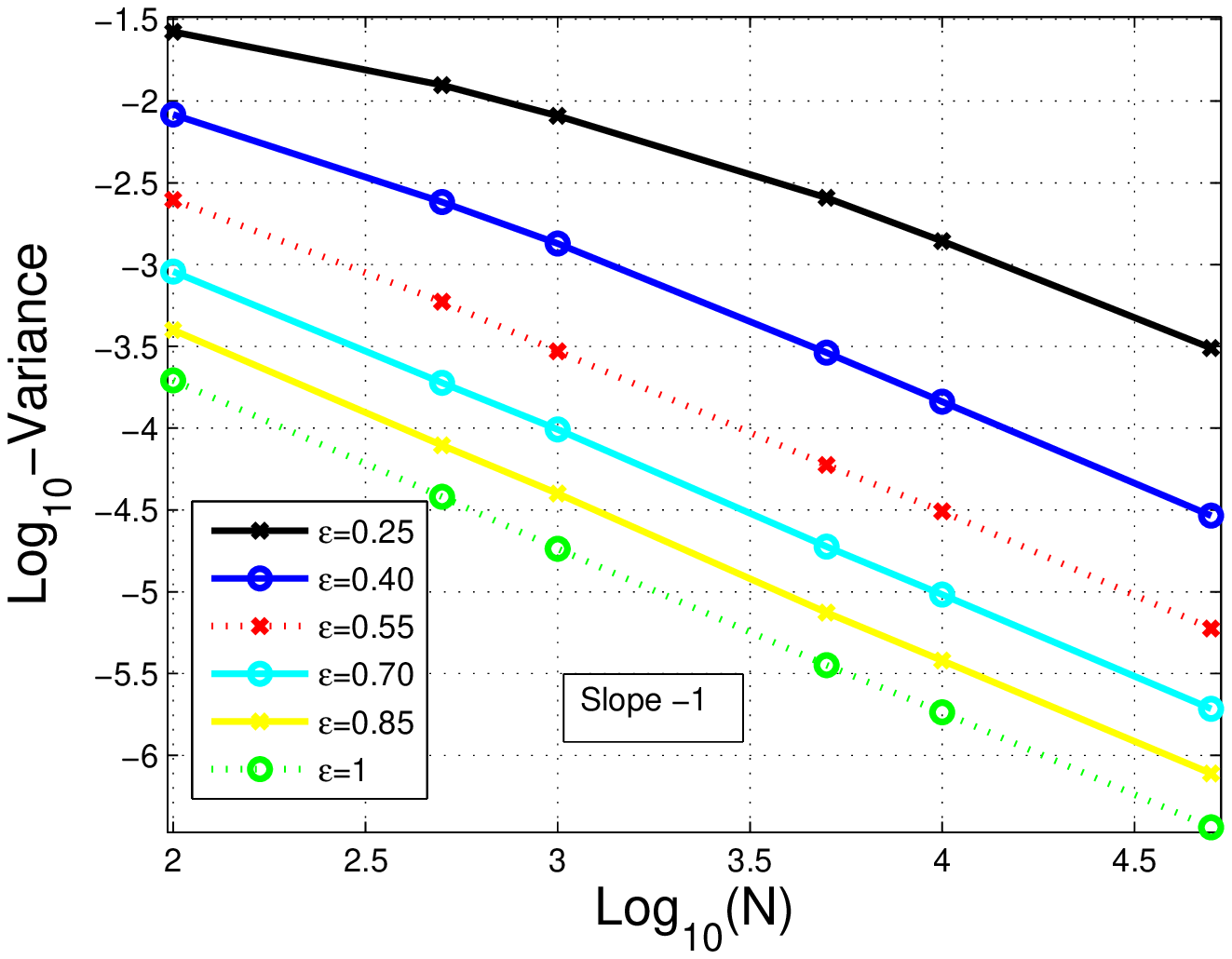}}
\subfigure[Variance as a function of $\varepsilon$]{\includegraphics[width=0.49\linewidth]{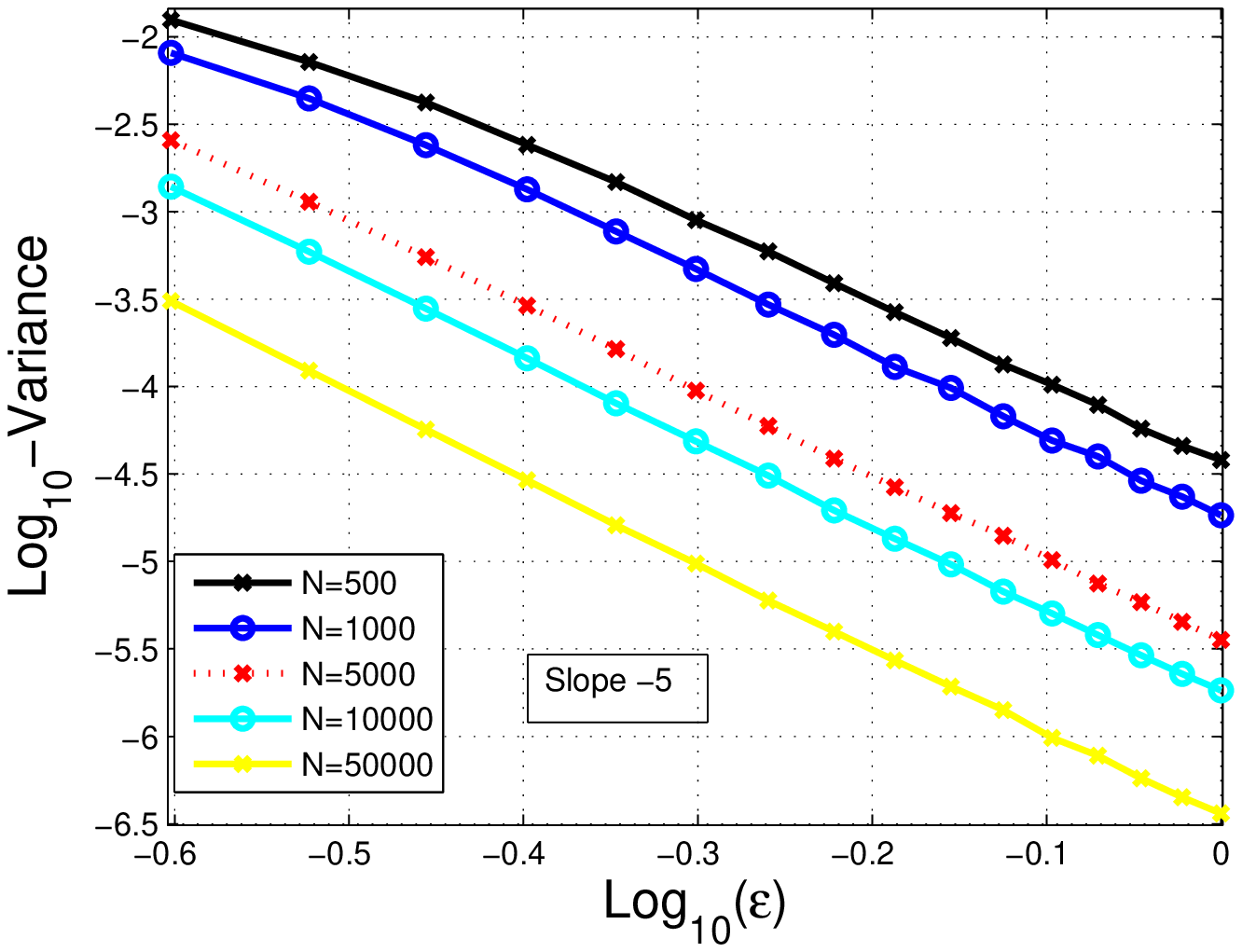}}
\end{center}
\caption{{\small  Variance error as a function of the number of particles, $N$, and the mollifier window width, $\varepsilon$, for dimension $d=5$ at the final time step $T=1$. } }
\label{fig:Variance5d}
\end{figure}

\begin{figure}[!h]
\begin{center}
{\includegraphics[width=0.8\linewidth,height=7cm]{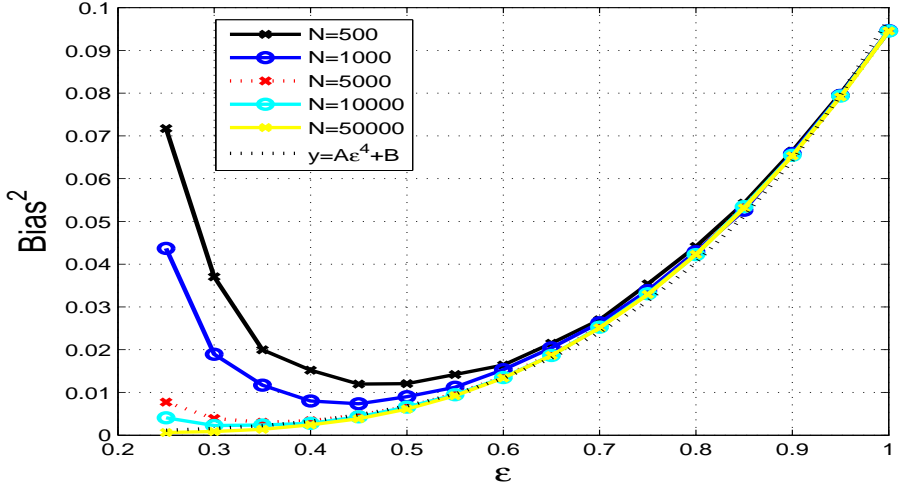}}
\end{center}
\caption{{\small  Bias error as a function of the mollifier window width, $\varepsilon$, for dimension $d=5$ at the final time step $T=1$. } }
\label{fig:Bias5d}
\end{figure}

\subsubsection{Time discretization error}
%%%%%%%%%%%%%%%%%%%%%%%%%%%%%
%DEBUT MODIFS NADIA 10/06/2016
%%%%%%%%%%%%%%%%%%%%%%%%%%%%%

In this subsection, we are interested in analysing via numerical simulations the time discretization error w.r.t. to $\delta t ={T}/{n}$. As announced in Remark~\ref{eq:R68}, we suspect that the rate in \eqref{eq:CS8} is not optimal and that the MISE error induced by the time discretization is of order  $1/{n}^2$ instead of $1/{n}$. 

Let $\tilde u^{\varepsilon, N, n}_T$ denote the particle approximation
% $\tilde u_T$,
obtained by scheme~\eqref{eq:tildeYu} with a number of particles, $N$, a regularization parameter, $\varepsilon$, and a number of time steps, $n$. 
In order to focus on the time discretization error apart from the particle approximation  and the regularization error 
(related to $N$ and   $\varepsilon$), we have considered  errors of the type 
$\E [\Vert \tilde u^{\varepsilon, N, n}_T-\tilde u_T^{\varepsilon, N, n_0}\Vert^2_2]$ for different numbers of time steps $n<n_0$  where $n_0$ is supposed to be a large number of time steps. 
More precisely, we have decomposed this error into a variance and a squared bias term as 
%VERIFIER AVEC NADIA
\begin{eqnarray} \label{EBiaisVar}
\E [\Vert \tilde u^{\varepsilon, N, n}_T-\tilde u_T^{\varepsilon, N, n_0}\Vert^2_2] &=&
\underbrace{\E \big [\Vert \tilde u^{\varepsilon, N, n}_T-\E[\tilde u_T^{\varepsilon, N, n}]\Vert^2_2\big ] 
+ \E \big [\Vert \tilde u^{\varepsilon, N, n_0}_T-\E [\tilde u_T^{\varepsilon, N, n_0}]\Vert^2_2\big ]}_{\textrm{Variance}} \nonumber \\  && \\
 &+& \underbrace{\Vert \E[\tilde u^{\varepsilon, N, n}_T]-\E [\tilde u_T^{\varepsilon, N, n_0}]\Vert^2_2}_{\textrm{Bias}^2}, \nonumber
\end{eqnarray}
 if 
$u^{\varepsilon, N, n}_T$ and $u^{\varepsilon, N, n_0}_T $ are independent.

On Figure~\ref{fig:MISEn}, we have reported the Monte Carlo estimation (according to~\eqref{eq:MISEApprox}, with $Q=1000$ runs) of the above variance and squared bias terms in a log-log scale in order to diagnose the expected rate of convergence $1/{n}^2$ via a straight line with slope $-2$. All the parameters are similar to the simulations performed in  previous subsection excepted for the dimension $d=1, N = 5000$ and $n_0$ is set to $1000$ time steps. One can observe that the variance term (in dashed lines) seems not to depend on the number of time steps $n$ whereas the squared bias term decreases as expected at a rate close to $1/n^2$. 
\begin{figure}[!h]
\begin{center}
{\includegraphics[width=0.8\linewidth,height=7cm]{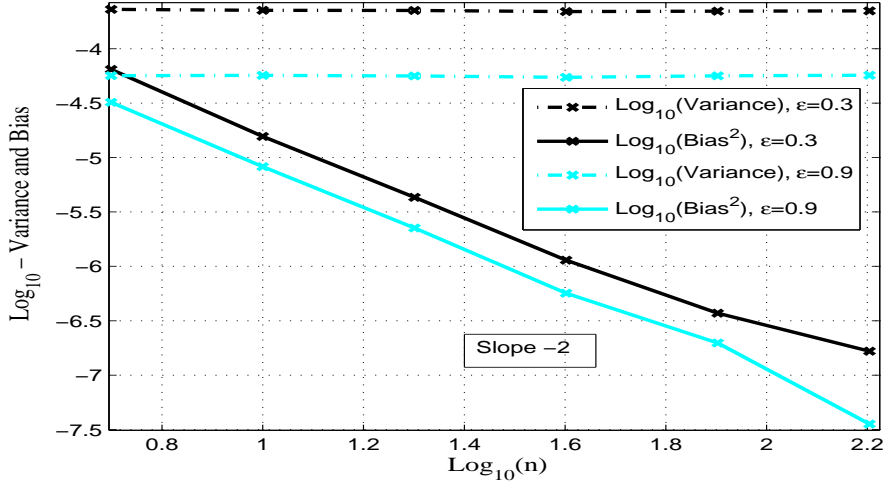}}
\end{center}
\caption{{\small  Variance and squared bias error \eqref{EBiaisVar} as a function of the number of time steps, $n=5,10,20,40,80,160,320$, at the final time step $T=1$ (with dimension $d=1$, $N=5000$ particles and $\varepsilon=0.3$ or $0.9$). } }
\label{fig:MISEn}
\end{figure}

%%%%%%%%%%%%%%%%%%%%%%%%%%%%%
%FIN MODIFS NADIA 10/06/2016
%%%%%%%%%%%%%%%%%%%%%%%%%%%%%

\section{Appendix}

\setcounter{equation}{0}
\label{SAppendix}

In this appendix, we present the proof of Lemma \ref{lem:Discrete}.
%We suppose here the vaAssumption 2. 
We first proceed with the proof of some intermediary inequalities. 
%before to prove Lemma \ref{lem:Discrete}.
\begin{lem} We suppose Assumption \ref{ass:main}.
\label{lem:ViVi'}
Let $N \in \N^{\star}$. Let $(\xi^{i,N})_{i=1,\cdots,N}$ be (a solution of) the interacting particle system \eqref{eq:XIi};
 let $(\tilde{\xi}^{i,N})_{i=1,\cdots,N}$ and $\tilde{u}$  as defined as in  the discretized interacting particle system \eqref{eq:tildeYu}. \\
%Under the same assumptions as in Proposition \ref{prop:DiscretTime}, 
The random variables $V_t^i := e^{\int_0^t\Lambda\big (s,\tilde \xi^{i,N}_{s},u^{S^N(\tilde{\xi})}_{s}(\tilde \xi^{i,N}_{s})\big )ds}$ and $ \tilde V_t^i := e^{\int_0^t\Lambda\big (r(s),\tilde \xi^{i,N}_{r(s)},\tilde{u}_{r(s)}(\tilde \xi^{i,N}_{r(s)})\big )ds}$,
 for all  $t \in [0,T]$, $i \in \{ 1,\cdots,N\}$ fulfill the following.
\begin{enumerate}
\item For all $t \in [0,T]$, $i \in \{1,\cdots,N\}$
\begin{eqnarray}
\label{E941}
 \E[\vert \tilde V_t^i-V_t^i \vert^2]
\leq C\delta t + C\E \left[ \int_0^t \vert \tilde{\xi}^{i,N}_{r(s)} - \tilde{\xi}^{i,N}_{s}  \vert^2   \ ds \right] + C \E \left[ \int_0^t \vert \tilde{u}_{r(s)}(\tilde \xi^{i,N}_{r(s)})-u^{S^N(\tilde \xi)}_s(\tilde \xi^{i,N}_s)\vert^2 ds \right],
\end{eqnarray}
where $C$ is a real positive constant depending only on $M_{\Lambda}$, $L_{\Lambda}$ and $T$.
\item For all $(t,y) \in [0,T] \times \R^d$, $i \in \{1,\cdots,N\}$
\begin{eqnarray}
\label{E942}
\vert \tilde{u}_{t}(y)-u_t^{S^N(\tilde \xi)}(y)\vert^2 \leq  \frac{M_K}{N}\sum_{i=1}^N K(y-\tilde \xi^{i,N}_t) \;  \vert \tilde V_t^i-V_t^i \vert^2 \ .
\end{eqnarray}
\end{enumerate}
\end{lem}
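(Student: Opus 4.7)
The plan is to prove the two items separately, since they rest on different elementary devices.

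For item 2, I would start from the two explicit representations derived directly from the definitions: $\tilde{u}_t(y) = \frac{1}{N}\sum_{j=1}^N K(y-\tilde{\xi}^{j,N}_t)\,\tilde{V}_t^j$ from \eqref{eq:tildeYu}, and $u_t^{S^N(\tilde{\xi})}(y) = \frac{1}{N}\sum_{j=1}^N K(y-\tilde{\xi}^{j,N}_t)\,V_t^j$ from the third line of the analogue of \eqref{eq:XIi} with $\xi$ replaced by $\tilde{\xi}$. Subtracting gives
\[
\tilde{u}_t(y) - u_t^{S^N(\tilde{\xi})}(y) \;=\; \frac{1}{N}\sum_{j=1}^N K(y-\tilde{\xi}^{j,N}_t)\bigl(\tilde{V}_t^j - V_t^j\bigr).
\]
To this I would apply the weighted Cauchy--Schwarz inequality $\bigl(\sum_j w_j a_j\bigr)^2 \le \bigl(\sum_j w_j\bigr)\bigl(\sum_j w_j a_j^2\bigr)$ with non-negative weights $w_j = K(y-\tilde{\xi}^{j,N}_t)$ and values $a_j = \tilde{V}_t^j - V_t^j$. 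Bounding $\frac{1}{N}\sum_j K(y-\tilde{\xi}^{j,N}_t) \le M_K$ via the uniform bound on $K$ (item 4 of Assumption \ref{ass:main}) immediately yields \eqref{E942}.

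For item 1, I would combine the elementary inequality \eqref{EMajor1} from Remark \ref{rem:V} with the uniform bound $|\Lambda|\le M_\Lambda$ to obtain
\[
|\tilde{V}_t^i - V_t^i| \;\le\; e^{tM_\Lambda} \int_0^t \Bigl|\Lambda\bigl(r(s),\tilde{\xi}^{i,N}_{r(s)},\tilde{u}_{r(s)}(\tilde{\xi}^{i,N}_{r(s)})\bigr) - \Lambda\bigl(s,\tilde{\xi}^{i,N}_s,u^{S^N(\tilde{\xi})}_s(\tilde{\xi}^{i,N}_s)\bigr)\Bigr|\,ds.
\]
I would then square, apply Cauchy--Schwarz in time to push the square inside the integral (producing a factor $t \le T$), and invoke the joint $\tfrac{1}{2}$-H\"older-in-time / Lipschitz-in-space property of $\Lambda$ supplied by Assumption \ref{ass:main2} (which is the regularity actually in force when the lemma is invoked in the proof of Proposition \ref{prop:DiscretTime}) to dominate the integrand by
\[
3L_\Lambda^2\Bigl(|s-r(s)| + |\tilde{\xi}^{i,N}_{r(s)}-\tilde{\xi}^{i,N}_s|^2 + |\tilde{u}_{r(s)}(\tilde{\xi}^{i,N}_{r(s)}) - u^{S^N(\tilde{\xi})}_s(\tilde{\xi}^{i,N}_s)|^2\Bigr).
\]
Taking expectation and using $|s-r(s)|\le \delta t$ so that $\int_0^t|s-r(s)|\,ds \le T\delta t$ then yields \eqref{E941}, with the constant $C$ absorbing $e^{2TM_\Lambda}$, $L_\Lambda^2$ and $T$ as announced.

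Neither step involves a genuine obstacle; both are routine exploitations of the exponential perturbation inequality \eqref{EMajor1} and of the convex combination structure of the empirical Nadaraya--Watson-like estimator defining $\tilde{u}_t$ and $u_t^{S^N(\tilde{\xi})}$. The only delicate point worth flagging is the need for the $\tfrac{1}{2}$-H\"older regularity of $\Lambda$ in the time variable in order to produce a genuine $C\delta t$ term in \eqref{E941}; without it, the time-discretization error of the $\Lambda$-integral could not be quantified in terms of $\delta t$, so the argument implicitly relies on Assumption \ref{ass:main2} rather than \ref{ass:main}.
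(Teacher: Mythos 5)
Your proposal is correct and follows essentially the same route as the paper: item 2 is the same one-line estimate (the paper applies Jensen's inequality to the average and then bounds $K^2 \le M_K K$, which is equivalent to your weighted Cauchy--Schwarz with weights $K(y-\tilde{\xi}^{j,N}_t)$), and item 1 is exactly the paper's argument via \eqref{EMajor1}, the bound $\vert \Lambda\vert \le M_\Lambda$, Cauchy--Schwarz in time, and the space-Lipschitz / $\tfrac{1}{2}$-H\"older-in-time regularity of $\Lambda$, followed by taking expectations and $\vert s-r(s)\vert \le \delta t$. Your closing remark that the $C\delta t$ term genuinely requires the time regularity of Assumption \ref{ass:main2} (even though the lemma's statement cites Assumption \ref{ass:main}) matches the paper's own proof, which explicitly invokes the $\tfrac{1}{2}$-H\"older continuity in time.
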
 
\begin{proof}[Proof of Lemma \ref{lem:ViVi'}]
Let us fix $t \in [0,T]$, $i \in \{1,\cdots,N\}$. To prove \eqref{E941}, it is enough to recall that $\Lambda$ being Lipschitz w.r.t. the space variables and $\frac{1}{2}$-Holder continuous w.r.t. the time variable, the inequality~\eqref{EMajor1} yields
\begin{equation}
\label{eq:VtildeV2}
 \vert \tilde V_t^i
-
V_t^i \vert^2
\leq 
3e^{2tM_{\Lambda}}L^2_{\Lambda} \int_0^t \left [\vert r(s)-s\vert +\vert \tilde \xi^{i,N}_{r(s)}-\tilde \xi^{i,N}_s\vert^2+\vert \tilde{u}_{r(s)}(\tilde \xi^{i,N}_{r(s)})-u_s^{S^N(\tilde \xi)}(\tilde \xi^{i,N}_s)\vert^2 \right ]\,ds\ ,
\end{equation}
and taking the expectation in both sides of \eqref{eq:VtildeV2} implies \eqref{E941} with $C := 3e^{2TM_{\Lambda}}L^2_{\Lambda}$. \\
% which concludes the proof of \eqref{E941}. \\
Let us fix $y \in \R^d$. Concerning \eqref{E942}, by recalling the third line equation of \eqref{eq:tildeYu} and the linking equation \eqref{NSDE3} (with $m = S^N(\tilde{\xi})$), we have 
\begin{eqnarray}
\vert \tilde{u}_{t}(y)-u_t^{S^N(\tilde \xi)}(y)\vert^2 & = & \left \vert \frac{1}{N} \sum_{i=1}^N K(y- \tilde{\xi}_t^{i,N}) \tilde{V}^i_t - \frac{1}{N} \sum_{i=1}^N K(y- \tilde{\xi}_t^{i,N}) V^i_t   \right \vert^2 \nonumber \\
& = & \left \vert \frac{1}{N} \sum_{i=1}^N K(y- \tilde{\xi}_t^{i,N}) \left( \tilde{V}^i_t - V^i_t \right)  \right \vert^2 \nonumber \\
&\leq &
\label{E944}
\frac{1}{N}\sum_{i=1}^N K^2(y-\tilde \xi^{i,N}_t)  \vert \tilde V_t^i
-
V_t^i \vert^2 \nonumber\\
& \leq & \frac{M_K}{N}\sum_{i=1}^N  K(y-\tilde \xi^{i,N}_t) \;  \vert \tilde V_t^i-V_t^i \vert^2 \ ,
\end{eqnarray}
which concludes the proof of \eqref{E942} and therefore of  Lemma \ref{lem:ViVi'}.
\end{proof}
%From now on, we prove Lemma \ref{lem:Discrete}.
\begin{proof}[\bf Proof of Lemma \ref{lem:Discrete}.]
All along this proof, $C$ will denote a positive constant that only  depends \\
 $T,M_K,m_{\Phi},m_g,L_K,L_\Phi,L_g$ and $M_{\Lambda},L_{\Lambda}$ and that can change from line to line. \\
%ANTHONY. ENCORE LES AUTRES CONSTANTES?\\
 Let us fix $t \in [0,T]$.

\begin{itemize}
\item Inequality \eqref{eq:vtilde} of Lemma~\ref{lem:Discrete} is simply a consequence of the following computation:
%the fact that the coefficients $\Phi$ and $g$ are uniformly bounded. Indeed,
\begin{eqnarray*}
\E[\vert \tilde \xi ^{i,N}_{r(t)}-\tilde \xi^{i,N}_{t}\vert^2]&=&\E\left [\Big \vert \int_{r(t)}^{t}\Phi(r(s),\tilde{\xi}^{i,N}_{r(s)},\tilde{u}_{r(s)}(\tilde \xi ^{i,N}_{r(s)}))\,dW_s+\int_{r(t)}^{t}g(r(s),\tilde{\xi}^{i,N}_{r(s)},\tilde{u}_{r(s)}(\tilde \xi ^{i,N}_{r(s)}))\,ds\Big \vert^2 \right ]\\
&\leq &
4\E\left [ \int_{r(t)}^{t}\vert \Phi(r(s),\tilde{\xi}^{i,N}_{r(s)},\tilde{u}_{r(s)}(\tilde \xi ^{i,N}_{r(s)})) - \Phi(r(s),0,0) \vert ^2\,ds\right ] + 4\E\left [ \int_{r(t)}^{t} \vert \Phi(r(s),0,0) \vert ^2\,ds\right ] \\
&& \; + 4(t-r(t))\E \left [\int_{r(t)}^{t}\vert g(r(s),\tilde{\xi}^{i,N}_{r(s)},\tilde{u}_{r(s)}(\tilde \xi ^{i,N}_{r(s)})) - g(r(s),0,0)\vert ^2\,ds \right ] \\
&& \; + 4(t-r(t))\E\left [ \int_{r(t)}^{t} \vert g(r(s),0,0) \vert ^2\,ds\right ] \\
& \leq & 8(L_{\Phi}^2+(t-r(t))L_{g}^2)\int_{r(t)}^t \E \Big[\vert \tilde{\xi}^{i,N}_{r(s)} \vert^2 \Big] + \E \Big[\vert \tilde{u}_{r(s)}(\tilde \xi ^{i,N}_{r(s)}) \vert^2 \Big] ds \\
&& + \; 4(t-r(t)) \Big( \sup_{s \in [0,T]} \vert \Phi(s,0,0) \vert^2 + (t-r(t)) \sup_{s \in [0,T]} \vert g(s,0,0) \vert^2 \Big)\\
&\leq & C\delta t \ ,\quad \textrm{as soon as }\quad \delta t \in\, ]0, 1[ \ ,
\end{eqnarray*}
where we have used the fact, under items 1. and 6. of Assumption \ref{ass:main2}, that the second order moment of $\tilde{\xi}^{i,N}_s$ is uniformly bounded. $\Lambda$ being uniformly bounded (item 3. of Assumption \ref{ass:main2}), the function $\tilde{u}$ as well. We have finally invoked item 6. of Assumption \ref{ass:main2}.
\item Now, let us focus on the second inequality \eqref{E82} of Lemma~\ref{lem:Discrete}. Note that for any $y\in \R^d$, the following inequality holds:
\begin{eqnarray}
\label{eq:r(t)t}
\vert \tilde{u}_{r(t)}(y)-\tilde{u}_t(y)\vert & \leq & \frac{1}{N} \sum_{i=1}^N \left[ \left \vert K(y-\tilde \xi^{i,N}_{r(t)}) -K(y-\tilde \xi^{i,N}_t)\right \vert \, e^{\int_0^{r(t)}\Lambda\big (r(s),\tilde \xi^{i,N}_{r(s)},\tilde{u}_{r(s)}(\tilde \xi^{i,N}_{r(s)})\big )ds} \right . \nonumber \\
& &  +
%\frac{1}{N}\sum_{i=1}^N
 \left .  K(y-\tilde \xi^{i,N}_t) \,\left \vert e^{\int_0^{r(t)}\Lambda\big (r(s),\tilde \xi^{i,N}_{r(s)},\tilde{u}_{r(s)}(\tilde \xi^{i,N}_{r(s)})\big )ds}- e^{\int_0^t\Lambda\big (r(s),\tilde \xi^{i,N}_{r(s)},\tilde{u}_{r(s)}(\tilde \xi^{i,N}_{r(s)})\big )ds} \right \vert \right ] \ . \nonumber \\ 
\end{eqnarray}
Using the fact that  $K$ and $\Lambda$ are bounded, 
one can apply~\eqref{EMajor1} to bound the second term of the sum on the r.h.s. of the above inequality as follows:
\begin{eqnarray}
\label{eq:K_exp}
K(y-\tilde \xi^{i,N}_t) \,\left \vert e^{\int_0^{r(t)}\Lambda\big (r(s),\tilde \xi^{i,N}_{r(s)},\tilde{u}_{r(s)}(\tilde \xi^{i,N}_{r(s)})\big )ds}- e^{\int_0^t\Lambda\big (r(s),\tilde \xi^{i,N}_{r(s)},\tilde{u}_{r(s)}(\tilde \xi^{i,N}_{r(s)})\big )ds}\right \vert 
& \leq & M_Ke^{tM_{\Lambda}}(t-r(t))M_{\Lambda} \nonumber \\
& \leq & C\delta t \ .
\end{eqnarray}
The first term of the sum on the r.h.s. of~\eqref{eq:r(t)t} is bounded using the Lipschitz property of $K$ and the fact that $\Lambda$ is bounded. 
\begin{eqnarray}
\label{eq:KKtilde}
\left \vert K(y-\tilde \xi^{i,N}_{r(t)}) -K(y-\tilde \xi^{i,N}_t)\right \vert\, e^{\int_0^{r(t)}\Lambda\big (r(s),\tilde \xi^{i,N}_{r(s)},\tilde{u}_{r(s)}(\tilde \xi^{i,N}_{r(s)})\big )ds} \leq  
L_Ke^{tM_{\Lambda}} \vert \tilde \xi^{i,N}_{r(t)}-\tilde \xi^{i,N}_t\vert \ .
\end{eqnarray}
Injecting \eqref{eq:K_exp}  and \eqref{eq:KKtilde} in \eqref{eq:r(t)t},  for all $y \in \R^d$, we obtain
$$
\vert \tilde{u}_{r(t)}(y) - \tilde{u}_{t}(y) \vert \leq C \delta t + \frac{L_Ke^{tM_{\Lambda}}}{N} \sum_{i=1}^N \vert \tilde{\xi}^{i,N}_{r(t)}-\tilde{\xi}^{i,N}_{t} \vert,
$$
which finally implies that 
%for any $y \in \R^d$
$$
\Vert \tilde{u}_{r(t)}-\tilde{u}_t \Vert_{\infty}^2\leq C\delta t ^2+\frac{C}{N}\sum_{i=1}^N \vert \tilde \xi^{i,N}_{r(t)}-\tilde \xi^{i,N}_t\vert^2 \ .
$$
We conclude by using  inequality \eqref{eq:vtilde} of Lemma~\ref{lem:Discrete} after taking the expectation of the r.h.s. of the above inequality. 

\item Finally, we  deal with  inequality \eqref{E83} of Lemma~\ref{lem:Discrete}. Observe that the 
error on the left-hand side can be decomposed as  
\begin{eqnarray}
\label{eq:tildev1}
\E[\Vert \tilde{u}_{r(t)}-u_t^{S^N(\tilde \xi)}\Vert ^2_{\infty}]
&\leq &2\E[\Vert \tilde{u}_{r(t)}-\tilde{u}_t\Vert^2_{\infty}]+2\E[\Vert \tilde{u}_{t}-u_t^{S^N(\tilde \xi)}\Vert^2_{\infty}] \nonumber \\
&\leq & C\delta t +2\E[\Vert \tilde{u}_{t}-u_t^{S^N(\tilde \xi)}\Vert^2_{\infty}]\ ,
\end{eqnarray}
where we have used  inequality \eqref{E82} of Lemma \ref{lem:Discrete}. \\
Let us consider the second term on the r.h.s. of the above inequality. 
To simplify the notations, we introduce the real valued random variables 
\begin{equation}
\label{eq:VVtildeDef2}
V_t^i := e^{\int_0^t\Lambda\big (s,\tilde \xi^{i,N}_{s},u^{S^N(\tilde{\xi})}_{s}(\tilde \xi^{i,N}_{s})\big )ds}\quad \textrm{and}\quad \tilde V_t^i := e^{\int_0^t\Lambda\big (r(s),\tilde \xi^{i,N}_{r(s)},\tilde{u}_{r(s)}(\tilde \xi^{i,N}_{r(s)})\big )ds}\ ,
\end{equation}
defined for any $i=1,\cdots N$ and $t\in [0,T]$. \\ 
Using successively inequalities \eqref{E941} of Lemma \ref{lem:ViVi'}, \eqref{eq:vtilde} of~Lemma~\ref{lem:Discrete} and \eqref{eq:uu'} of Proposition \ref{lem:uu'}, we have for all $i \in \{1,\cdots,N\}$,   
\begin{eqnarray}
\label{eq:Vnew}
 \E[\vert \tilde V_t^i-V_t^i \vert^2]
&\leq & C\delta t + C \E \left[ \int_0^t \vert \tilde{u}_{r(s)}(\tilde \xi^{i,N}_{r(s)})-u^{S^N(\tilde \xi)}_s(\tilde \xi^{i,N}_s)\vert^2 ds \right] \nonumber\\
& \leq & C\delta t + C \E \left[ \int_0^t \vert \tilde{u}_{r(s)}(\tilde \xi^{i,N}_{r(s)})-u^{S^N(\tilde \xi)}_s(\tilde \xi^{i,N}_{r(s)})\vert^2 ds \right] \nonumber \\
&& + \; C \E \left[ \int_0^t \vert u_{s}^{S^N(\tilde{\xi})}(\tilde \xi^{i,N}_{r(s)})-u^{S^N(\tilde \xi)}_s(\tilde \xi^{i,N}_{s})\vert^2 ds \right] \nonumber \\
&\leq & C\delta t +C\int_0^t \left [ \E [\Vert \tilde{u}_{r(s)}-u^{S^N(\tilde \xi)}_s\Vert_\infty^2] + \E[\vert \tilde \xi^{i,N}_{r(s)}-\tilde \xi^{i,N}_s\vert^2] \right ]\, ds \nonumber\\
&\leq &
C\delta t +C\int_0^t \E [\Vert \tilde{u}_{r(s)}-u^{S^N(\tilde \xi)}_s\Vert_\infty^2] \, ds \ .
\end{eqnarray}
On the other hand, inequality \eqref{E942} of Lemma \ref{lem:ViVi'} implies
\begin{eqnarray}
\label{major_E86}
\Vert  \tilde{u}_{t}-u_t^{S^N(\tilde \xi)} \Vert_{\infty}^2 \leq \frac{M_K^2}{N}\sum_{i=1}^N   \vert \tilde V_t^i-V_t^i \vert^2 \ .
\end{eqnarray}
Taking the expectation in both sides of \eqref{major_E86} and using \eqref{eq:Vnew} give
\begin{equation}
\label{eq:tildevxi}
\E[\Vert \tilde{u}_{t}-u_t^{S^N(\tilde \xi)}\Vert^2_{\infty}]
%&\leq & C\delta t +\frac{C}{N}\sum_{i=1}^N \E \int_0^t \vert \tilde{u}_{r(s)}(\tilde \xi^{i,N}_{r(s)})-u^{S^N(\tilde \xi)}_s(\tilde \xi^{i,N}_s)\vert^2 ds \nonumber\\
%&\leq & C\delta t +C\int_0^t \left [ \E [\Vert \tilde{u}_{r(s)}-u^{S^N(\tilde \xi)}_s\Vert_\infty^2] + \sup_{i=1,\cdots N}\E[\vert \tilde \xi^{i,N}_{r(s)}-\tilde \xi^{i,N}_s\vert^2] \right ]\, ds \nonumber\\
\leq \frac{M_K^2}{N}\sum_{i=1}^N   \E \left[ \vert \tilde V_t^i-V_t^i \vert^2 \right] \leq  C\delta t +C\int_0^t \E [\Vert \tilde{u}_{r(s)}-u^{S^N(\tilde \xi)}_s\Vert_\infty^2] \, ds\ .
\end{equation}
We end the proof by injecting this last inequality in~\eqref{eq:tildev1} and by applying Gronwall's lemma.
\end{itemize}
\end{proof}

\noindent\textbf{ACKNOWLEDGEMENTS.}
The authors are very grateful to the anonymous Referee for her / his careful reading of the paper
and the suggestions which have largely contributed to improve the first submitted version. 
The third named   author has benefited partially from the
support of the ``FMJH Program Gaspard Monge in optimization and operation
research'' (Project 2014-1607H).

\newpage
% % %\addcontentsline{toc}{section}{Bibliography}
\bibliographystyle{plain}
\bibliography{NonConservativePDE}

\def\cprime{$'$}
\begin{thebibliography}{10}

\bibitem{Barenb}
G.~I. Barenblatt.
\newblock On some unsteady motions of a liquid and gas in a porous medium.
\newblock {\em Akad. Nauk SSSR. Prikl. Mat. Meh.}, 16:67--78, 1952.

\bibitem{BCR1}
N.~Belaribi, F.~Cuvelier, and F.~Russo.
\newblock A probabilistic algorithm approximating solutions of a singular {PDE}
  of porous media type.
\newblock {\em Monte Carlo Methods and Applications}, 17(4):317--369, 2011.

\bibitem{BCR3}
N.~Belaribi, F.~Cuvelier, and F.~Russo.
\newblock Probabilistic and deterministic algorithms for space multidimensional
  irregular porous media equation.
\newblock {\em SPDEs: Analysis and Computations}, 1(1):3--62, 2013.

\bibitem{mbajourdain}
M.~Ben~Alaya and B.~Jourdain.
\newblock Probabilistic approximation of a nonlinear parabolic equation
  occurring in rheology.
\newblock {\em J. Appl. Probab.}, 44(2):528--546, 2007.

\bibitem{BertShre}
D.~P. Bertsekas and S.~E. Shreve.
\newblock {\em Stochastic optimal control}, volume 139 of {\em Mathematics in
  Science and Engineering}.
\newblock Academic Press, Inc. [Harcourt Brace Jovanovich, Publishers], New
  York-London, 1978.
\newblock The discrete time case.

\bibitem{bossyjourdain}
M.~Bossy and B.~Jourdain.
\newblock Rate of convergence of a particle method for the solution of a 1{D}
  viscous scalar conservation law in a bounded interval.
\newblock {\em Ann. Probab.}, 30(4):1797--1832, 2002.

\bibitem{BossyTalay95}
M.~Bossy and D.~Talay.
\newblock A stochastic particle method for some one-dimensional nonlinear
  p.d.e.
\newblock {\em Math. Comput. Simulation}, 38(1-3):43--50, 1995.
\newblock Probabilit{\'e}s num{\'e}riques (Paris, 1992).

\bibitem{BossyTalay97}
M.~Bossy and D.~Talay.
\newblock A stochastic particle method for the {M}c{K}ean-{V}lasov and the
  {B}urgers equation.
\newblock {\em Math. Comp.}, 66(217):157--192, 1997.

\bibitem{BouchardTouzi}
B.~Bouchard and N.~Touzi.
\newblock Discrete-time approximation and {M}onte {C}arlo simulation of
  backward stochastic differential equations.
\newblock {\em Stochastic Process. Appl.}, 111:175--206, 2004.

\bibitem{cheridito}
P.~Cheridito, H.~M. Soner, N.~Touzi, and N.~Victoir.
\newblock Second-order backward stochastic differential equations and fully
  nonlinear parabolic {PDE}s.
\newblock {\em Comm. Pure Appl. Math.}, 60(7):1081--1110, 2007.

\bibitem{crauel}
H.~Crauel.
\newblock {\em Random probability measures on {P}olish spaces}, volume~11 of
  {\em Stochastics Monographs}.
\newblock Taylor \& Francis, London, 2002.

\bibitem{DelMoral}
P.~Del~Moral.
\newblock {\em Feynman-{K}ac formulae}.
\newblock Probability and its Applications (New York). Springer-Verlag, New
  York, 2004.
\newblock Genealogical and interacting particle systems with applications.

\bibitem{moral1}
P.~Del~Moral.
\newblock {\em Mean field simulation for {M}onte {C}arlo integration}, volume
  126 of {\em Monographs on Statistics and Applied Probability}.
\newblock CRC Press, Boca Raton, FL, 2013.

\bibitem{GobetWarin}
E.~Gobet, J-P. Lemor, and X.~Warin.
\newblock A regression-based {M}onte {C}arlo method to solve backward
  stochastic differential equations.
\newblock {\em Ann. Appl. Probab.}, 15(3):2172--2202, 2005.

\bibitem{labordere}
P.~Henry-Labord\`ere.
\newblock Counterparty risk valuation: {A} marked branching diffusion approach.
\newblock {\em Available at SSRN: http://ssrn.com/abstract=1995503 or
  http://dx.doi.org/10.2139/ssrn.1995503}, 2012.

\bibitem{LabordereTouziTan}
P.~Henry-Labord{\`e}re, X.~Tan, and N.~Touzi.
\newblock A numerical algorithm for a class of {BSDE}s via the branching
  process.
\newblock {\em Stochastic Process. Appl.}, 124(2):1112--1140, 2014.

\bibitem{JourMeleard}
B.~Jourdain and S.~M{\'e}l{\'e}ard.
\newblock Propagation of chaos and fluctuations for a moderate model with
  smooth initial data.
\newblock {\em Ann. Inst. H. Poincar\'e Probab. Statist.}, 34(6):727--766,
  1998.

\bibitem{karatshreve}
I.~Karatzas and S.~E. Shreve.
\newblock {\em Brownian motion and stochastic calculus}, volume 113 of {\em
  Graduate Texts in Mathematics}.
\newblock Springer-Verlag, New York, second edition, 1991.

\bibitem{KO97}
A.~Kohatsu-Higa and S.~Ogawa.
\newblock Weak rate of convergence for an {E}uler scheme of nonlinear {SDE}'s.
\newblock {\em Monte Carlo Methods Appl.}, 3(4):327--345, 1997.

\bibitem{LOR1}
A.~Le~Cavil, N.~Oudjane, and F.~Russo.
\newblock Probabilistic representation of a class of non conservative nonlinear
  partial differential equations.
\newblock {\em Preprint HAL. https://hal.archives-ouvertes.fr/hal-01241701},
  2015.

\bibitem{McKean}
H.~P.~Jr. McKean.
\newblock Propagation of chaos for a class of non-linear parabolic equations.
\newblock In {\em Stochastic {D}ifferential {E}quations ({L}ecture {S}eries in
  {D}ifferential {E}quations, {S}ession 7, {C}atholic {U}niv., 1967)}, pages
  41--57. Air Force Office Sci. Res., Arlington, Va., 1967.

\bibitem{pardouxgeilo}
E.~Pardoux.
\newblock Backward stochastic differential equations and viscosity solutions of
  systems of semilinear parabolic and elliptic {PDE}s of second order.
\newblock In {\em Stochastic analysis and related topics, {VI} ({G}eilo,
  1996)}, volume~42 of {\em Progr. Probab.}, pages 79--127. Birkh\"auser
  Boston, Boston, MA, 1998.

\bibitem{pardoux}
{\'E}.~Pardoux and S.~G. Peng.
\newblock Adapted solution of a backward stochastic differential equation.
\newblock {\em Systems Control Lett.}, 14(1):55--61, 1990.

\bibitem{rascanu}
E.~Pardoux and A.~Ra{\c{s}}canu.
\newblock {\em Stochastic differential equations, Backward SDEs, Partial
  differential equations}, volume~69.
\newblock Springer, 2014.

\bibitem{rogers_v2}
L.~C.~G. Rogers and D.~Williams.
\newblock {\em Diffusions, {M}arkov processes, and martingales. {V}ol. 2}.
\newblock Cambridge Mathematical Library. Cambridge University Press,
  Cambridge, 2000.
\newblock It{\^o} calculus, Reprint of the second (1994) edition.

\bibitem{silverman1986}
B.~W. Silverman.
\newblock {\em Density estimation for statistics and data analysis}.
\newblock Monographs on Statistics and Applied Probability. Chapman \& Hall,
  London, 1986.

\bibitem{sznitman}
A-S. Sznitman.
\newblock Topics in propagation of chaos.
\newblock In {\em \'{E}cole d'\'{E}t\'e de {P}robabilit\'es de {S}aint-{F}lour
  {XIX}---1989}, volume 1464 of {\em Lecture Notes in Math.}, pages 165--251.
  Springer, Berlin, 1991.

\bibitem{Talay}
D.~Talay.
\newblock Probabilistic numerical methods for partial differential equations:
  elements of analysis.
\newblock In {\em Probabilistic models for nonlinear partial differential
  equations ({M}ontecatini {T}erme, 1995)}, volume 1627 of {\em Lecture Notes
  in Math.}, pages 148--196. Springer, Berlin, 1996.

\end{thebibliography}

\end{document}